\documentclass[12pt,reqno]{amsart}

\usepackage{amsmath,amsthm,amssymb}
\usepackage{epsf}
\usepackage{amssymb}
\usepackage{graphics}
\usepackage{stackrel}
\usepackage[usenames,dvipsnames]{xcolor}
\usepackage{hyperref}

\usepackage{pdfsync}
\setlength{\unitlength}{0.8cm}
 \setlength{\baselineskip}{18pt}
 \setlength{\parindent}{0cm}
 \setlength{\parskip}{6pt}
\setlength{\textwidth}{16cm}
\setlength{\textheight}{21cm}
\setlength{\oddsidemargin}{0.1in}
\setlength{\evensidemargin}{0.1in}
\setlength{\headheight}{30pt}
\setlength{\headsep}{40pt}
\setlength{\topmargin}{-60pt}
\setlength{\marginparwidth}{0pt}

\newcommand{\sh}{h} 
\newcommand{\wt}{k} 
\newcommand{\cusp}{\mathfrak{a}}



\newcommand{\beq}{\begin{equation}}
\newcommand{\eeq}{\end{equation}}
\newcommand{\mc}{\mathcal}

\newcommand{\hf}{\frac 12}
\newcommand{\thf}{\tfrac 12}
\newcommand{\qtr}{\frac 14}
\newcommand{\kt}{\frac k2}
\newcommand{\tkt}{\tfrac k2}
\newcommand{\lt}{\left}

\newcommand{\rt}{\right}
\newcommand{\vho}{\varrho}

\newtheorem{theorem}{Theorem}[section]

\newtheorem{lemma}[theorem]{Lemma}

\newtheorem{proposition}[theorem]{Proposition}

\numberwithin{equation}{section}

\theoremstyle{definition}
\newtheorem{remark}[theorem]{Remark}

\newcommand\ben{\begin{enumerate}}
\newcommand\een{\end{enumerate}}

\newcommand{\nc}{\newcommand}
\nc{\nen}{\newenvironment}

\nen{thma}[1]{\label{#1}{\bf Theorem A.} \em}{}
\nen{thmb}[1]{\label{#1}{\bf Theorem B.\ } \em}{}

\newcommand\be{\begin{equation}}
\newcommand\ee{\end{equation}}
\newcommand\benn{\begin{equation*}}
\newcommand\eenn{\end{equation*}}
\newcommand\bea{\begin{eqnarray}}
\newcommand\eea{\end{eqnarray}}
\newcommand\beann{\begin{eqnarray*}}
\newcommand\eeann{\end{eqnarray*}}
\nc{\br}{\mathbb R}

\newcommand{\boldH}{\mathbb{H}}
\newcommand{\R}{\mathbb{R}}
\newcommand{\C}{\mathbb{C}}
\newcommand{\Z}{\mathbb{Z}}
\newcommand{\Q}{\mathbb{Q}}
\newcommand{\N}{\mathbb{N}}

 \nc{\lm}{\lambda}

\newcommand{\ds}{\mathrm{d}s}
\newcommand{\dw}{\mathrm{d}w}

\newcommand{\ol}{\overline}

\nc{\ba}{\mathbb A}
\newcommand{\A}{\mathbb{A}}

\newcommand{\cI}{\mathcal{I}}

\newcommand{\Or}{\ensuremath{{\mathcal O}}}

\newcommand{\gd}{\delta}     
\newcommand{\G}{\Gamma}      
\newcommand{\Gi}{\Gamma_{\infty}}      
\newcommand{\g}{\gamma}      
\newcommand{\gw}{\omega}      
\newcommand{\inv}{^{-1}}
\nc{\SLR}{SL_2(\br)} 
\nc{\GLR}{GL_2(\br)} 
\nc{\PGLR}{PGL_2(\br)}
\nc{\PSLR}{PSL_2(\br)}
\nc{\PSLZ}{PSL_2(\bz)}
\nc{\SLC}{SL(2,\bc)}
\nc{\PGL}{PGL_2}
\nc{\bt}{\beta} 
\nc{\om}{\omega} 
\nc{\dl}{\delta}
 \nc{\Dl}{\Delta}

\def\Res{\qopname\relax m{Res}}

\newcommand{\vol}{\mathcal{V}}

\renewcommand{\mod}{\operatorname{mod}}


\renewcommand{\Re}{{\mathfrak{Re}}}
\renewcommand{\Im}{{\mathfrak{Im}}}

\newcommand{\<}{\left\langle}
\renewcommand{\>}{\right\rangle}

\newcommand{\bk}{\backslash}

\nc{\bz}{\mathbb Z} \nc{\bc}{\mathbb C}
\nc{\Hom}{\mathrm{Hom}}

\nc{\8}{\infty}
\nc{\uH}{\mathbb H}

\nc{\cal}{\mathcal}  
   \nc{\CH}{\cal H}   
 \nc{\al}{\alpha}

 \nc{\haf}{\frac{1}{2}}
\nc{\eps}{\epsilon}
\AtBeginDocument{%
   \def\MR#1{}
}

\begin{document}

\title{Multiple Dirichlet series and Shifted Convolutions}

\author{Jeff Hoffstein and Thomas A. Hulse} \email{jhoff@math.brown.edu,tahulse@colby.edu,reznikov@math.biu.ac.il}
\address{Department of Mathematics,
Brown University, Providence, RI, $02912$ \\ 
Department of Mathematics and Statistics, Colby College, Waterville, ME, $04901$ \\
 Bar Ilan University, Ramat-Gan, Israel}
 \date{\today}
 \dedicatory{Dedicated to Winnie Li with admiration.}
\maketitle
\begin{abstract}
We define, and obtain the meromorphic continuation of, shifted Rankin-Selberg convolutions in one and two variables. 
As sample applications, this continuation is used to obtain estimates for single and double shifted sums and
a Burgess-type bound for $L$-series associated to modular forms of arbitrary central character. Further applications are furnished by subsequent works by the authors and their colleagues. 
\end{abstract}

\section{Introduction}
In 1939 and 1940 in \cite{Ra} and \cite{S1}, Rankin and Selberg independently introduced what is now called a Rankin-Selberg convolution.    Very roughly, if 
$$
L(s,f) = \sum_{m \ge 1}\frac{a(m)}{m^{s+\frac{k-1}{2}}} \,\, \, \text{and} \,\, \, L(s,g) = \sum_{m \ge 1}\frac{b(m)}{m^{s+\frac{k-1}{2}}} 
$$
are two $L$-series corresponding to modular forms, each convergent when $\Re \, s >1$, with a functional equation in $s$, and a meromorphic continuation to $\C$, the Rankin-Selberg convolution provides a functional equation and a meromorphic continuation to $\C$ of the new $L$-series
$$
L(s, f \otimes g)= \sum_{m \ge 1}\frac{a(m) \bar b(m)}{m^{s+k-1}}.
$$
This idea had a profound impact on the field and was responsible for what was, at that time, the best progress toward the Ramanujan conjecture.   

In 1965, in \cite{Se}, a paper summing up progress in modular forms, Selberg introduced the notion of a shifted convolution.  If $h$ is a positive integer, his shifted convolution was the series
\be\label{S}
\sum_{m \ge 1} 
{a(m) \bar b(m+h)
\over
(2m+h)^{s+k-1}}.
\ee
This series converges locally normally for $\Re \, s >1$ and Selberg mentioned that the meromorphic continuation of this could be obtained. He went on to remark: ``We cannot  make much use of this function at the moment...". 

Since then it has been generally recognized that the analytic properties of series such as \eqref{S} play a very important role in estimating the size of certain shifted sums, such as
$$
\sum_{m <x}a(m) \bar b(m+h) \lt(m(m+h)\rt)^{\frac{1-k}{2}},
$$
and that these in turn play an important role in progress toward the Lindel\"of Hypothesis, that is, the bounding from above of automorphic $L$-series at points inside the critical strip. Good, in \cite{Gd1} and \cite{Gd2}, was the first to use the spectral expansion of \eqref{S} to demonstrate a polynomial bound in $s$ on vertical strips and provide an estimate of the second moment of the corresponding $L$-series. See \cite{LLY2} for a wider survey of this subject.


One purpose of this paper is to suggest that the series $\eqref{S}$ might not tell the whole story.   In particular, 
we will show that the meromorphic continuation to $\C$ of the  series
\be\label{oursum}
D(s;h)=\sum_{m \ge 1} 
{a(m+h) \bar b(m)
\over
 m^{s+k-1}},
\ee
and generalizations of it, can be a valuable source of additional information.
  Establishing this meromorphic continuation proves to be a surprisingly difficult task and is the subject of Sections 2--5.  Once done, we extend this notion to multiple shifted Dirichlet series, in which a sum is taken over the shifts $h$ as well, and show that these can also be meromorphically continued.  
  
  One interesting aspect of these shifted multiple Dirichlet series is that, just like their $L$-series and multiple Dirichlet series counterparts, they have an analog of a critical strip  where all the deep number theoretic information is buried.  Once the continuations in both complex variables have been established, some strong results on shifted sums, in which there is significant cancellation uniformly in the length and the shift parameter, are obtained (Theorem~\ref{thm:sum}).  From these estimates subconvexity bounds (Theorem~\ref{thm:sub}) 
 follow with little difficulty.      The main purpose of this work is the introduction of 
the series \eqref{oursum} and its generalizations, and of the techniques leading to Theorem~\ref{thm:sum}.    The application to subconvexity is included as a ``proof of concept" to demonstrate the advantages of this approach.
\subsection{A subconvexity bound} 
The subconvexity problem for $GL(2)$ can be posed in several different aspects and here we will focus on the conductor of a twisting character in the context of holomorphic cusp forms.    Specifically, let 
 $f$ be a primitive holomorphic cusp form of even weight $\wt$ and character $\chi_1$  for 
$\G_0(N_0)$, with $N_0$ square free, having Fourier expansion
$$
f(z)=\sum_{m\ge1} A(m)m^{(k-1)/2} e^{2\pi i m z}.
$$

For $\chi$ a Dirichlet character mod $Q$, we write the twisted $L$-series as
$$
L(s,f,\chi) = \sum_{m\ge1}\frac{A(m)\chi(m)}{m^s}.
$$
This possesses a functional equation as $s \rightarrow 1-s$, and the convexity bound for the central value which follows is 
$$
L(\thf,f,\chi) \ll_f Q^{\hf + \epsilon}.
$$
The Lindel\"of Hypothesis in the $Q$ aspect states that the exponent can be reduced from 
$\hf + \epsilon$ to $\epsilon$, and any bound that reduces $\hf$ to $\hf - \delta$, for some 
$\delta >0$ is called a convexity breaking bound.  The corresponding convexity bound for $GL(1)$ is $1/4$.

In a groundbreaking series of papers, Duke, Friedlander, and Iwaniec developed techniques that broke the convexity bound for various families of $L$-functions \cite{DFI1,DFI2,DFI3}.    Another celebrated result, in the context  of $GL(1)$ and a Dirichlet $L$-series, was obtained by Conrey and Iwaniec, \cite{CI}, who showed that
$$
L(\thf, \chi) \ll Q^{1/6 + \epsilon},
$$
where the character $\chi$ is real and primitive with odd conductor $Q$.   This is analogous to the Weyl bound for the Riemann zeta function, and it remains the best known result for a $GL(1)$ $L$-series where $Q$ is varied over odd integers. 

Progress has been made in $GL(2)$ by a number of different authors, with the most recent and strongest results in the twisting parameter being obtained by Blomer and Harcos.   In \cite{BH}, they demonstrate that for $f$ a primitive holomorphic or Maass cusp form of archimedean parameter $\mu$, level $N$ and trivial nebentypus,
\begin{equation*}
L(\thf, f,\chi)  \ll_{\epsilon}\left( |\mu|^{\hf}N^{1/4}q^{3/8}+|\mu|N^{\hf}(N,q)^{1/4}q^{1/4}\right)(|\mu|Nq)^\epsilon
\end{equation*}
when $f$ is holomorphic, and 
\begin{equation*}
L(\thf, f,\chi)  \ll_{\epsilon}\left( (1+|\mu|)^{3}N^{1/4}q^{3/8}+(1+|\mu|)^{7/2}N^{\hf}(N,q)^{1/4}q^{1/4}\right)\\ (1+|\mu|)Nq)^\epsilon
\end{equation*}
in the Maass form case.

In \cite{BH10}, Blomer and Harcos work over an arbitrary totally real field $K$.  In this context they let $\pi$  be an irreducible cuspidal representation of $GL(2,K)\bk GL(2,\A_K)$,
with unitary central character and $\chi$  a Hecke character with conductor of norm $Q$.  They also let $\theta$, with $0 \le \theta \le \hf$, be the best progress toward the Ramanujan-Petersson conjecture.  (Here $\theta = 7/64$ is the best known bound.  See  \cite{KS} for $\Q$ and  \cite{BB11} for an arbitrary number field.  The Ramanujan-Petersson conjecture predicts $\theta = 0$.) They obtain the bound
$$
L(\thf, \pi \otimes\chi) \ll_{\pi, \chi_\infty,K,\epsilon} Q^{\frac12 -\frac18(1-2\theta) + \epsilon}.
$$

Here, as our point is to demonstrate a proof of concept for a new method, we work in a more restricted setting than in the work of Blomer and Harcos, restricting ourselves to $\Q$, and the case of $f$ holomorphic of square free level with even weight.   
 In Section~\ref{sect:subcon} we prove the following
\begin{theorem}\label{thm:sub}
Let $f$ be a primitive  holomorphic cusp form of even weight $k$, character $\chi_1$ and fixed square free level $N_0$, and let $\chi$ be a Dirichlet character modulo $Q$.  Then
$$
L(\thf,f,\chi) \ll_{f,\epsilon} Q^{3/8+\theta/4+\epsilon}.
$$
\end{theorem}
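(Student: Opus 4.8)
\emph{Proof plan.} The plan is to deduce the bound from the shifted-sum estimate of Theorem~\ref{thm:sum} by opening $|L(\thf,f,\chi)|^{2}$, detecting the character, and reducing to shifted convolution sums summed over the shift. First I would reduce to $\chi$ primitive: if $\chi$ is induced by the primitive character $\chi^{*}$ modulo $Q^{*}\mid Q$, then $L(s,f,\chi)$ and $L(s,f,\chi^{*})$ differ by a finite Euler product over $p\mid Q$ of bounded degree, so $L(\thf,f,\chi)\ll_{f,\epsilon}Q^{\epsilon}\,|L(\thf,f,\chi^{*})|$ with $Q^{*}\le Q$, and the primitive case suffices. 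Since $N_{0}$ is fixed, $L(s,f,\chi)$ has a functional equation $s\mapsto 1-s$ of analytic conductor $\asymp_{f}Q^{2}$, with explicit root number of modulus $1$ involving $\tau(\chi)$. Applying the approximate functional equation to $|L(\thf,f,\chi)|^{2}=L(\thf,f,\chi)\,L(\thf,\tilde f,\bar\chi)$ expresses it, up to $O_{f}(Q^{-100})$, as a main term of size $\ll_{f,\epsilon}Q^{\epsilon}$ — coming from the ``diagonal'' $a=b$ and bounded by $\sum_{a\ll Q}|A(a)|^{2}/a$ via Rankin--Selberg — plus the off-diagonal
$$
\mathcal{O}\;=\;\sum_{\substack{a,b\ge 1\\ a\ne b}}\frac{A(a)\,\ol{A(b)}}{\sqrt{ab}}\;\chi(a)\,\ol{\chi(b)}\;V\!\lt(\frac{ab}{Q^{2}}\rt),
$$
with $V$ smooth and rapidly decreasing, so effectively $ab\ll Q^{2+\epsilon}$ (the dual piece of the approximate functional equation contributes a term of identical shape). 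It then suffices to prove $\mathcal{O}\ll_{f,\epsilon}Q^{3/4+\epsilon}$, which is exactly a convexity-breaking gain since the trivial bound for $\mathcal{O}$ is only $Q^{1+\epsilon}$.

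To estimate $\mathcal{O}$ I would detect the character. After a dyadic decomposition with $a\asymp M$, $b\asymp N$, $MN\ll Q^{2+\epsilon}$, write $\chi(a)\ol{\chi(b)}=\chi(r)$ on the set $\{a\equiv rb\pmod{Q}\}$, $r\in(\Z/Q\Z)^{\times}$, and split accordingly. The part $r\equiv 1$, i.e.\ $a\equiv b\pmod{Q}$ with $a\ne b$, is a sum of genuine shifted convolution sums $\sum_{b}A(b+\ell)\,\ol{A(b)}\,(b(b+\ell))^{(1-k)/2}\,w(b/Y)$ with $\ell=hQ$ a nonzero multiple of $Q$ and $Y\ll Q^{2}/|\ell|$; summed over the admissible $|h|\ll Q$ these are already $\ll_{f,\epsilon}Q^{\epsilon}$ by a trivial Cauchy--Schwarz estimate, but in any range where that does not suffice Theorem~\ref{thm:sum} applies directly. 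For the oscillating part $r\not\equiv 1$ I would open $\chi$ by Gauss sums, detect the congruence $a\equiv rb\pmod{Q}$ by additive characters, and apply Voronoi summation in the variable of larger length; the complete character sum in $r$ then collapses to a Gauss sum times a short additive character, and executing it leaves weighted shifted convolution sums $\sum_{m}A(m+\ell)\,\ol{A(m)}(\cdots)\,w(m/Y)$ in the Voronoi-dual variables, now with shifts $|\ell|\ll Q^{1+\epsilon}$ (carrying bounded arithmetic factors, essentially Ramanujan sums to modulus $Q/(c,Q)$) and lengths $Y\ll Q^{2+\epsilon}$. Inserting the bound of Theorem~\ref{thm:sum}, carrying out the sum over $\ell$, and optimising over $M,N$ gives $\mathcal{O}\ll_{f,\epsilon}Q^{3/4+\epsilon}$, hence $L(\thf,f,\chi)\ll_{f,\epsilon}Q^{3/8+\epsilon}$.

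The hard part is entirely the shifted-sum input, and this is the step I expect to be the obstacle. A trivial bound for a single shifted convolution, summed term by term over the $\asymp Q$ admissible shifts, loses precisely what the functional equation gains and merely reproduces the convexity exponent $\thf$; the gain must come from (i) the uniformity of Theorem~\ref{thm:sum} in the length ($Y\ll Q^{2+\epsilon}$) and in the shift ($\ell\ll Q^{1+\epsilon}$), and (ii) the extra cancellation obtained when the whole family of shifts is summed simultaneously — which is precisely why Sections~2--5 build the \emph{multiple} shifted Dirichlet series, with the shift itself a variable of summation — so that the final sum over $\ell$ costs essentially nothing. Everything else (the reduction to primitive $\chi$, the approximate functional equation, the dyadic and congruence decompositions, Voronoi summation, and the evaluation of the complete character sums) is standard and contributes only $Q^{\epsilon}$; the one point needing care is to track the modulus through the Voronoi step so that the arithmetic factors that appear are estimated without wasting any power of $Q$.
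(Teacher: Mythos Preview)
Your approach has a genuine gap: you are trying to bound $|L(\thf,f,\chi)|^{2}$ for a \emph{single} character directly, whereas the paper (Section~\ref{sect:subcon}) proceeds by \emph{amplification}. There one writes $L(\thf,f,\chi)\ll Q^{-1/2}\max_{x\ll Q}B_{\chi}(x)$ from the approximate functional equation, and then studies the amplified moment
\[
S=\sum_{\psi\bmod Q}\Big|\sum_{m}A(m)\psi(m)G(m/x)\Big|^{2}\Big|\sum_{\ell\sim L,\ \ell\text{ prime}}\bar\chi(\ell)\psi(\ell)\Big|^{2},\qquad L=Q^{1/4}\log Q.
\]
Opening this and executing the $\psi$-sum produces exactly the congruence $\ell_{1}m_{1}\equiv\ell_{2}m_{2}\pmod Q$ with $\ell_{1},\ell_{2}$ primes $\sim L$, i.e.\ precisely the shifted sums $S_{Q}(x,y)$ of Theorem~\ref{thm:sum}. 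The role of the amplifier is twofold: (i) it manufactures the parameters $\ell_{1},\ell_{2}$ that match the structure of Theorem~\ref{thm:sum}; (ii) the choice $L\gg Q^{1/4}\log Q$ invokes the \emph{second} clause of Theorem~\ref{thm:sum}, which removes the $\theta$, giving $S\ll Q^{9/4+\epsilon}$ and hence $Q^{3/8+\epsilon}$ independently of Ramanujan.

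Your plan fails at both of these points. First, decomposing $\chi(a)\bar\chi(b)$ over residues $a\equiv rb\pmod Q$ does \emph{not} give shifted convolutions of the shape $\ell_{1}m_{1}=\ell_{2}m_{2}+hQ$ with bounded $\ell_{1},\ell_{2}$; for generic $r$ the constraint is genuinely different and Theorem~\ref{thm:sum} says nothing about it. Your proposed remedy --- open $\chi$ by Gauss sums and apply Voronoi --- goes in a circle: a character-twisted sum of length $\asymp Q$ to modulus $Q$ is essentially self-dual under Voronoi, and after resumming the Gauss sum you recover another bilinear form in $A(a')\bar\chi(a')$ and $\overline{A(b)}\bar\chi(b)$, i.e.\ the original problem. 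No shifted convolution of the required type emerges. Second, even if one could somehow reduce to $S_{Q}(x,y)$ with $\ell_{1}=\ell_{2}=1$, Theorem~\ref{thm:sum} then only yields $S_{Q}(x,y)\ll Q^{\theta-1/2}y$, so the resulting exponent would depend on $\theta$ and would not give the clean $3/8$. The amplification is not optional bookkeeping here; it is what creates the link to Theorem~\ref{thm:sum} and what allows the $\theta$ to be eliminated.
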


\begin{remark}
Since an earlier version of this paper was circulated, Blomer and Harcos have succeeded in removing the dependence on nebentypus and the Ramanujan conjectures in an addendum to their paper \cite{BH}.  We, in the opposite direction, have discovered that we are unable to remove it. Thus we note that earlier versions of this paper contain the mistaken claim that the bound we are able to obtain is independent of progress toward the Ramanujan Conjecture.
\end{remark}

The techniques of this paper have been further generalized to produce other subconvexity results. In \cite{arxiv5}, Kuan obtained a hybrid bound for $L(\thf+it,f,\chi)$ in the $t$ and $Q$-aspects. In \cite{arxiv4}, K\i{}ral was able to produce a completely analogous bound to our Theorem \ref{thm:sub} in the case of twisted $L$-functions for half-integral weight holomorphic cusp forms. In \cite{arxiv2}, the first author and Min Lee were able to obtain a Weyl-type estimate for the Rankin-Selberg convolution of a holomorphic cusp form and a Maass form in the eigenvalue aspect. We anticipate that more varied subconvexity results can be achieved via these shifted sums.

\subsection{A bit more on shifted convolutions}
As mentioned above, the main purpose of this paper
is to present a new method for approaching problems related to shifted sums, subconvexity and moments.  In particular, we will introduce a new class of shifted single and multiple Dirichlet series and show how their full meromorphic continuations can be obtained and how they can be applied to this class of problems.

Let $\ell_1,\ell_2$ be two positive integers, and fix $h \ge 1$.   Suppose that $f,g$ are modular forms, of even weight $k$ and square-free level $N_0$, which have Fourier expansions
\begin{align}
f(z) &= \sum_{m \ge 1}a(m)e^{2 \pi i m z} = \sum_{m \ge 1}A(m)m^{(k-1)/2}e^{2 \pi i m z}, \notag \\
g(z) &= \sum_{m \ge 1}b(m)e^{2 \pi i m z} = \sum_{m \ge 1}B(m)m^{(k-1)/2}e^{2 \pi i m z}. \label{obvious}
\end{align}

The constraint that $k$ is even arises from our use of Andre Reznikov's result in the appendix of this work.

The fundamental single Dirichlet series that we will investigate is 
$$
D(s;h, \ell_1,\ell_2)=\sum_{m_2 \ge 1, \atop m_1\ell_1= m_2\ell_2 + {h}} 
{a(m_1) \bar b(m_2)
\over
 (m_2\ell_2)^{s+\wt-1}}.
$$
The $\ell_1,\ell_2$ parameters are included for use in amplification arguments targeted at upper bounds for individual $L$-series.   See, for example, 
\cite{B}, \cite{BH} and \cite{Sa2}.   When $\ell_1 = \ell_2 =1$ this specializes to 
\be\label{Dfirst}
D(s;h)=\sum_{m \ge 1} 
{a(m+h) \bar b(m)
\over
 m^{s+\wt-1}}.
\ee 
Selberg mentions that the meromorphic continuation of his series can be obtained by combining ``the technique of Rankin" with a certain Poincare series:
$$
 \sum_{\g\in\Gi\bk\G}(\Im (\g z))^s e^{2\pi i  h \g z }.
$$
Goldfeld, \cite{Go1}, wrote down the series \eqref{Dfirst}, rather than \eqref{S} and remarked that  Selberg had briefly indicated how to obtain its meromorphic continuation.   Goldfeld approaches that continuation in a different way, using instead the Poincare series
$$
\sum_{\g\in\Gi\bk\G}(\Im \g z)^{\hf}I_{s-\hf}(2 \pi |h|\Im \, \g z)e^{2 \pi i h \Re \, \g z}.
$$
The function $I_v(y)$ is the modified Bessel function of the first kind, and grows exponentially.
Goldfeld achieves the meromorphic continuation of \eqref{Dfirst} to $\Re \, s >\hf$ via the analytic properties of an inner product of his Poincare series with $f$ and $g$.   However the Dirichlet series that comes out of this method is more closely related to \eqref{S} than to \eqref{Dfirst}, and a hypergeometric series needs to be introduced to carry over the information.  The presence of this 
hypergeometric series seriously impacts efforts to continue past $\Re \, s >\hf$.

A fundamental difficulty with the series \eqref{S} seems to be the tying together of the $h$ and the $m$ variables in the denominator.   One can see, for example, that if one is interested in estimating sums of coefficients of a given length $x$, that this sum becomes seriously unbalanced when $h$ greatly exceeds $x$.   It seems to be the case that the best way to ``untie" the $m$ and $h$, so as to avoid unwanted alternative coupling from integral expansions and smoothing functions, is to reverse the sign of the $h$ in the exponent of the Poincare series obtaining, for example:
$$
 P_h(z;s)=\sum_{\g\in\Gi\bk\G}(\Im (\g z))^s e^{-2\pi i \, h \,\g z }.
$$
When one does this, the series $D(s;h)$ comes naturally out of a formal computation of a triple product.  Unfortunately, the series $P_h(z;s)$ grows exponentially in $y$, making it difficult to turn the formal computation into a genuine one.   Goldfeld faced a similar difficulty in \cite{Go1} and dealt with it by  truncating the fundamental domain.    In the present case the difficulties seem to be more fundamental and it appears to be necessary to work with an approximation to $ P_h(z;s)$ by two extra parameters.   This is done in Sections 2-5. 
 It is shown there that the general series $D(s;h, \ell_1,\ell_2)$, which converges for $\Re \, s >1$, has a meromorphic continuation to all of $\C$, with rightmost poles at the points $\hf \pm it_j$, where the eigenvalues of the Laplacian are $\frac{1}{4}+t_j^2$. To accomplish this, a spectral expansion for 
$D(s;h, \ell_1,\ell_2)$ is found which is locally normally convergent for $\Re \, s< \hf -\kt$.   The gap between $1$ and $\hf-\kt$ is bridged by an approximation to $D(s;h)$ which is locally normally convergent everywhere, and whose limit approaches $D(s;h)$ as the two parameters tend to infinity and zero respectively. 
The consequent meromorphic continuation of $D(s;h)$ is the content of Proposition~\ref{prop:D0}.

Most applications of shifted sums call for a sum over the shift $h$.   Because of this it is natural to consider the collective behavior of $D(s;h, \ell_1,\ell_2)$ as $h$ varies.   This is accomplished first in the meromorphic continuation to $\{(s,w) \in \C^2| \Re \, w >1\}$ of the shifted double Dirichlet series:
$$
Z(s,w) =  \sum_{h \ge 1}\frac{ D(s;h)}{h^{w+(k-1)/2}} = \sum_{h \ge 1,  m_2 \ge 1 \atop m_1\ell_1= m_2\ell_2 + {h}} 
{a(m_1) \bar b(m_2)
\over
 (m_2)^{s+\wt-1}h^{w + (k-1)/2}},
 $$
 and more generally of the series
$$
Z_Q(s,w)= \sum_{ h, m_2 \ge 1 \atop m_1\ell_1= m_2\ell_2 + {hQ}} 
{a(m_1) \bar b(m_2)
\over
 (\ell_2m_2)^{s+\wt-1}(hQ)^{w + (k-1)/2}}.
$$

This series has poles in $s$ at the same locations as $D(s;h)$,
 but it also has additional emergent polar lines at $w+2s+\kt-\frac{5}{2} \in \Z_{\leq 0}.$     In Section~\ref{sect:Zsums} we prove the following theorem, which is the underpinning of the application.
\begin{theorem}\label{thm:sum}
Fix $N_0,\ell_1,\ell_2,h\in\N$,  with $N_0,\ell_1,\ell_2$ square free and $(N_0,\ell_1\ell_2)=1$.
Let 
$$
S_Q(x,y) = \sum_{h, m_2 \ge 1 \atop \ell_1m_1 = \ell_2m_2 + hQ  }
A(m_1)\bar B(m_2) G_1(m_1/y)G_2(m_2/x),
$$
where $G_1(x),G_2(x)$ are smooth, with compact support in the interval $[1,2]$.
Suppose $y \gg x \gg 1$, $\ell_1, \ell_2 \sim L \ll Q^{\hf}$.   Then 
$$
S_Q(x,y) \ll Q^{\theta-\hf + \epsilon}Ly^{1+\epsilon}(y/x)^{ \epsilon}.
$$
\end{theorem}
The coefficients $A(m), B(m)$ are constant on average and the length of this sum is 
$xy/Q$.   Thus the Theorem gives a square root of the length estimate when $y$ and $x$ are of comparable sizes which, if we let $Q =1$, matches the trivial cancellation we expect to get if we treat $A(n)$ and $B(n)$ like random variables and heuristically factor the double sum as the product of two partial sums. The result becomes non-trivial when $Q^{1+\epsilon}\gg y = x$, which is the case which is relevant to this paper. This result could be considerably improved if one had further information about upper bounds for $Z_Q(s,w)$ in the region $\Re \, w \le 1$.  (See Remark~\ref{remark:bottle}.)  Such information is available from \cite{HLee}, but considering the length of the present work, such improvements are deferred to another paper.

Analogous results can be obtained when $f$ and $g$ are replaced with other automorphic forms. Indeed, by considering the shifted convolution sums of Fourier coefficients of a theta function and a half-integral weight Eisenstein series, the second author, K\i{}ral, Kuan, and Li-Mei Lim were able to produce asymptotic counts of binary quadratic forms with square discriminants under certain constraints \cite{arxiv6}.

One interesting aspect of the one and two-variable shifted Dirichlet series discussed is that each has two incarnations: a Dirichlet series that converges locally normally in one range, and a spectral expansion that converges locally normally in a disjoint region.   As one might expect, all the interesting information seems to be hidden in between in a ``critical strip" $\hf-\kt<\Re \, s <1$.    In particular, the point $(1-\kt,\hf)$ lies inside this critical strip, and  the ``correct" bound for $S_Q(x,y) $ would follow from an accurate estimate for  $Z_Q(1-\kt,\hf)$. 

Indeed, by obtaining mean value estimates for $Z_Q(1-\kt,\hf)$ as $Q$ varies over a short interval, the first author and Min Lee were able to obtain an asymptotic formula for the second moment
\[
 \frac{1}{\varphi{(Q)}} \sum_{\chi \mbox{ \tiny mod} Q} L(1/2,f,\chi)\overline{L(1/2,g,\chi)}.
\]
By demonstrating that this result is non-zero, they were able to produce a simultaneous non-vanishing result \cite{HLee}. 

The spectral expansions yield another curious development.   The terms decay polynomially in the spectral parameter $t_j$, rather than exponentially, and the ratio of gamma factors governing this decay seems to act in a manner analogous to the usual $m^{-\sigma}$ indexing of a Dirichlet series (polynomial decay corresponding to the size of $\sigma$.)

This work is an expanded version of part of a previous work \cite{arxiv} by the first author. A systematic error in the meromorphic continuation in the continuous part of the spectrum was observed and corrected by the second author along with strengthening other results in sections \ref{section:M}-\ref{sect:app}, and \ref{sect:Zsums}.

The authors would like to thank Valentin Blomer, David Hansen, Roman Holowinsky, Mehmet K\i{}ral, Alex Kontorovich, Min Lee, Andre Reznikov,  and Matt Young for many very helpful and enlightening conversations. We would also like to thank this paper's referee, who provided many suggestions that helped us to clarify and streamline many of our arguments.

\subsection{An outline of the paper}
In Section~\ref{sect:spectral}, the Poincare series $P_{\sh,Y}(z,s;\delta)$ is introduced and its spectral expansion is computed.   A function $M(s,t,\delta)$ appears in this expansion and,  in Section~\ref{section:M}, the meromorphic continuation of $M(s,t,\delta)$ is obtained.   In Section 4, an approximation to $D(s;h)$, namely $D(s;h,\delta)$, is introduced, and its meromorphic continuation is obtained.   The Appendix is an important ingredient in this continuation.   In Section 5, the limit as $\delta \to 0$ is taken, yielding the meromorphic continuation of the single shifted Dirichlet series $D(s;h)$.  In Section 6, an application to single shifted sums is given, and in Section 7, the double Dirichlet series $Z_Q(s,w)$ is introduced and its meromorphic continuation is obtained.     In Section 8, an application to double shifted sums is given, and in Section 9, this double shifted sum is in turn applied to obtain our subconvexity bound.

\begin{remark}
In this paper any use of the term  ``convergence" shall  refer to local normal convergence.   Also, unless otherwise indicated, the implied constant in a $\ll$, $\gg$ and big-$\Or$ expression will depend upon a particular $\epsilon$ appearing in the expression, a constant $A$, which is defined in Section~\ref{section:M}, and  cusp forms $f,g$, including their level $N_0$, but will be independent of all other variables. Furthermore the notation $x \sim T$ specifically denotes all allowable $x$ found in the interval $[\frac{T}{2},2T]$ and is somewhat different from the usual Landau Notation for asymptotic. 
\end{remark}

\section{Spectral expansions}
\label{sect:spectral}

Fix $N_0,\ell_1,\ell_2,h\in\N$,  with $N_0,\ell_1,\ell_2$ square free, $(N_0,\ell_1\ell_2)=1$ and set $N=N_0\ell_1\ell_2/(\ell_1,\ell_2)$ and  $\G=\G_0(N)$. The requirement that $N_0$ be square-free is imposed to simplify the Fourier expansions of Eisenstein series that appear in spectral expansions.

For $z\in\boldH$ and $s\in\C$, $\Re(s)>1$, consider the Poincare series:
\be\label{Pdef}
P_{\sh}(z;s)
:=
\sum_{\g\in\Gi\bk\G}
(\Im (\g z))^s e^{-2\pi i \, {\sh} \,\g z }
.
\ee
This is the usual Poincare series except that the sign in the exponent is negative rather than positive.
For any fixed $z$ the series converges locally normally where  $\Re(s)>1$ and is invariant under action by $\G$.  It does, however, grow exponentially in $y$, which complicates efforts to determine its spectral properties.   For this reason, for any fixed $Y \gg 1$ and $\delta>0$ we define the modified series $P_{\sh,Y}(z,s;\delta)$ by
\be\label{PYdeltadef}
P_{\sh,Y}(z,s;\delta)
:=
\sum_{\g\in\Gi\bk\G} \psi_Y( \Im \g z)
(\Im (\g z))^s e^{-2\pi i \, {\sh} \,\Re \g z + (2\pi  \, {\sh} \,\Im \g z)(1-\delta) }
,
\ee
where $\psi_Y$ is the characteristic function of the interval $[Y^{-1},Y]$.

The series $P_{\sh,Y}(z,s;\delta)$  consists of finitely many terms and is in $L^2(\G\bk\boldH)$.  It also possesses a Fourier expansion, which can be easily worked out:
\be\label{FourierExp}
P_{\sh,Y}(z,s;\delta) =
\sum_{m}a_{m,Y}(s,y;\delta)e^{2\pi i m x},
\ee
where
\bea\label{FourierCoef}
a_{m,Y}(s,y;\delta)
&=&
\delta_{m,-h} y^s e^{2 \pi h y(1-\delta)}\psi_Y(y)
\\
\nonumber
& +&
\sum_{c \ne 0}
{{S(m,-h;cN)}\over{N^{2s}|c|^{2s}}} \cdot y^{1-s}
\\
\nonumber
& \times&
\int_{-\infty}^\infty{e^{(2\pi i h u + 2 \pi h(1-\delta))/(N^2 c^2 y(u^2 +1))-2\pi i m u y}\over(u^2 + 1)^s}
 \psi_Y\left({1\over N^2 c^2 y (u^2 +1)}\right)du.
\eea

The Petersson inner product of two forms $F,G$ in $L^2(\G\bk\boldH)$ is defined by
\be\label{Innerdef}
\<F,G\> = {1 \over \vol} \iint_{\G\bk\boldH} F(z) \overline{G(z)}{dx dy\over y^2},
\ee
where
$$\mathcal{V} =\text{volume}(\G\bk\boldH)
		= \frac{\pi}{3}N\prod_{p\mid N} \left(1+p^{-1}\right).
	$$
Set $V$ to be the function
\be\label{Vdef}
V_{f,g}(z;\ell_1,\ell_2):=y^\wt  \overline{f(\ell_1 z)}g(\ell_2z),
\ee
where $f$ and $g$ are as defined in \eqref{obvious}, which is rapidly decreasing in the cusps of $\G$. We see that $V_{f,g}$ is $\G$-invariant. Indeed, for $\gamma =\left( \begin{smallmatrix} a & b \\ Nc & d \end{smallmatrix}\right) \in \Gamma$, we have that, 
\begin{align}
f(\ell_1 (\gamma z)) & = f\left(\begin{pmatrix}
\ell_1 & 0 \\
0 & 1
\end{pmatrix}
\begin{pmatrix}
a & b \\
\frac{N_0\ell_1\ell_2}{(\ell_1,\ell_2)}c & d
\end{pmatrix} z \right) \\
& =f\left(\begin{pmatrix}
a & b\ell_1 \\
\frac{N_0\ell_2}{(\ell_1,\ell_2)}c & d
\end{pmatrix} \ell_1 z \right) = (Ncz+d)^k f(\ell_1 z), \notag
\end{align}
as $\frac{N_0\ell_2}{(\ell_1,\ell_2)} \in N_0 \Z$. Similarly, we have $g(\ell_2 (\gamma z)) = (Ncz+d)^kg(\ell_2 z)$ and so 
\beq 
\Im(\gamma z)^k \ol{f(\ell_1(\gamma z))}  g(\ell_2(\gamma z))=y^k\ol{f(\ell_1 z)}g(\ell_2 z),
\eeq giving the $\G$-invariance of $V$. 

Our approach will be to compute the inner product $\< P_{\sh,Y}(*,s;\delta),V\>$ in several different ways, and to then let $Y \rightarrow \infty$.  To begin, we compute
\bea\label{Idef}
\cI_{\ell_1,\ell_2,Y,\gd}(s;{\sh})
&:=&
\<P_{\sh,Y}(*,s;\delta),V\>
\\
\nonumber
&=&
\frac{1}{\vol}\iint_{\G\bk\boldH}P_{\sh,Y}(z,s;\delta)f(\ell_1 z) \overline{g(\ell_2z)}y^\wt{dx\, dy\over y^2}
\\
\nonumber
&=&
\frac{1}{\vol}\sum_{m_1\ge1} 
\sum_{m_2\ge1}
{a(m_1) \bar b(m_2)
}
\int_0^1 
 e^{2\pi i x (m_1\ell_1 -m_2\ell_2{-\sh})}
 dx
 \\
 \nonumber
 & & 
 \times
\int_0^\infty 
 e^{-2\pi  y(m_1\ell_1 +m_2\ell_2{-\sh}(1-\delta))} 
y^{s+\wt-1} \psi_Y(y)
{ dy\over y}
\\
\nonumber
&=&
\frac{1}{\vol} (4\pi)^{-(s+\wt-1)}
\sum_{m_1\ell_1= m_2\ell_2 + {h}} 
{a(m_1) \bar b(m_2)
\over
 (m_2\ell_2)^{s+\wt-1}
}\\
 \nonumber
 & & 
 \times
\int_{Y^{-1}4 \pi \ell_2 m_2}^{Y4 \pi \ell_2 m_2}
 e^{- y(1 + h \delta / (2 \ell_2 m_2)  )  } 
y^{s+k-1} 
{ dy\over y}.
\eea
Note that for $\Re s >1$, and all $Y >1, \delta  >0$ the integral and sum converge locally normally and satisfy the upper bound
$$
\cI_{\ell_1,\ell_2,Y,\gd}(s;{\sh}) \ll {1 \over N} \G(\Re \, s + k-1) \sum_{m_1\ell_1= m_2\ell_2 + {h}} 
{
|a(m_1) \bar b(m_2)|
\over
 (m_2\ell_2)^{\Re s+\wt-1}
},
$$
where the implied constant is absolute.

Let $\{u_j\}_{j\ge0}$ be an orthonormal basis (with respect to the inner product defined in \eqref{Innerdef}) for the discrete part of the spectrum of the Laplace operator on $L^2(\G\bk\boldH)$.  Suppose the $u_j$ have  eigenvalue $1/4+t_j^2$, and Fourier expansion
$$
u_j(z)=\sum_n\rho_j(n) y^{\hf}K_{i t_j}(2\pi |n| y)e^{2\pi i n x}.
$$

As $P_{\sh,Y}(z,s;\delta)\in L^2(\G\bk\boldH)$
we can obtain the spectral expansion of $\cI$:
\bea\label{otherhand}
\cI_{\ell_1,\ell_2,Y,\gd}(s;{\sh})&=&\sum_{j\ge1}\overline{\<V,u_j\>}  \<P_{\sh,Y}(    *    ,s;\delta),u_j\>\\&+&\frac{1}{4 \pi} \sum_\cusp \vol \int_{-\infty}^\infty {\overline{\<V,E_\cusp(*,\thf + it)\>}}\<P_{\sh,Y}(    *    ,s;\delta),E_\cusp(*,\thf + it)\>dt\nonumber.
\eea

We easily compute
\be\label{ujP}
\<P_{\sh,Y}(*,s;\delta),u_j\> = {\overline{\rho_j(-h)}\over \vol (2\pi \sh)^{s - \hf}}\int_{Y^{-1}2 \pi \sh}^{Y2 \pi \sh} y^{s-\hf}e^{ y(1-\delta)}K_{it_j}(y){dy \over y}
\ee
and
\begin{align}
& \<P_{\sh,Y}(*,s;\delta),  E_\cusp(*,\thf + it)\>\notag \\ 
& \ \ \ \ \ =
\frac{1}{\vol (2 \pi h)^{s-\hf}}\overline{\left(\frac{2\pi^{\frac{1}{2}+it} |h|^{it} \rho_\cusp\left(\frac{1}{2}+it, -h\right)}{\Gamma\left(\frac{1}{2}+it\right)}\right)}
\int_{Y^{-1}2 \pi \sh}^{Y2 \pi \sh} y^{s-\hf}e^{ y(1-\delta)}K_{it}(y){dy \over y}. \label{EP}
\end{align}
Note that $u_0$ is constant and $\<u_0,P_{\sh,Y}\>=0$.  We have taken the Eisenstein series notation from \cite{DI}, that is: $E_\cusp(z,s)$ denotes the Eisenstein series expanded at the cusp $\cusp$.  Here for each cusp $\cusp = u/w$,
\begin{multline}\label{Eadef}
E_\cusp(z,s)= \delta_{\mathfrak{a},\infty} y^s + \frac{\sqrt{\pi}\G(s-\hf)\rho_\cusp(s,0)y^{1-s}}{\G(s)}\\
+ \frac{2 \pi^s\sqrt{y}}{\G(s)}\sum_{m \ne 0}|m|^{s-\hf}\rho_\cusp(s,m)K_{s-\hf}(2\pi |m|y)e^{2\pi i mx},
\end{multline}
and
\begin{equation}\label{rhoadef}
\rho_\cusp(s,m) = \left(\frac{\left(w,\frac{N}{w}\right)}{wN}\right)^s
\sum_{(\gamma,N/w) =1}\frac{1}{\gamma^{2s}}\sum_{{{\delta \mod \gamma w}
\atop {(\delta, \gamma w) =1}}\atop \delta \gamma \equiv u \mod (w,N/w)}
e^{-2\pi im 
\delta/(\gamma w)}.
\end{equation}
 More precisely, since $N$ is square free we have, by \cite{HLee},
	for each cusp $\cusp=\frac{1}{w}$ where $w\mid N$ that
	\begin{multline}\label{rhoexact}
	\rho_\cusp(s, n)
	=
	\left(\frac{w}{N}\right)^s
	\frac{\sum_{d\mid n, \atop (d, N)=1} d^{1-2s}}
	{\zeta(2s)}
	\\
	\times
	\prod_{p\mid N} (1-p^{-2s})^{-1}
	\prod_{p\mid w,\atop p^k\|n, k\geq 0}
	\frac{p^{-2s}} {1-p^{-2s+1}}
	\left(p-p^{k(-2s+1)+1} -1 + p^{(k+1)(-2s+1)}\right)
	\end{multline}
	for $n\neq 0$.

By Lemma 3.4 in \cite{B} we have that for $t \in \R$ the $\rho_\cusp(s,m)$ satisfy 
\be\label{Blomer}
\sum_\cusp | \rho_\cusp(\thf + it,m)|^2 \ll_\epsilon ((1+ |t|)m N)^\epsilon.
\ee

We will ultimately need to understand the behavior of the integral appearing in \eqref{ujP} and \eqref{EP} for fixed small $\delta$
as $Y \rightarrow \infty$.  With this in mind we define
\be\label{Mdef}
M(s,t,\delta) = \int_0^\infty y^{s-\hf}e^{ y(1-\delta)}K_{it}(y){dy \over y}. 
\ee
Consider the well-known bounds
\be\label{Kbound1}
K_{\gamma+ir}(y) \ll \frac{e^{-y}}{\sqrt{y}}
\ee
for $y \gg 1$ and
\be\label{Kbound2}
K_{\gamma +ir}(y) \ll \lt\{
\begin{array}{ll}
y^{-|\gamma|} & \mbox{ if } \gamma+ir \neq 0 \\
|\log(y)| & \mbox{ if } \gamma+ir = 0
\end{array}
\rt.
\ee
for $y \ll 1$ where implicit constants are dependent on $\gamma,r\in\R$. From the integral formula
\be \label{wellknown}
K_{\gamma+ir}(y) = \frac12\int_0^\infty e^{-\frac y2(u+u^{-1})}u^{\gamma+ir}\frac{du}{u},
\ee
we see that $|K_{\gamma+ir}(y)| \leq K_\gamma(y)$. Thus we can take the bound $\eqref{Kbound1}$ to be uniform in $r$ for fixed $\gamma$, and similarly if $\gamma \neq 0$ we can take the bound $\eqref{Kbound2}$ to be uniform in $r$ as well. Combining this with the recursive formula \be
K_{ir}(y) = \frac{y}{2ir}\left(K_{1+ir}(y)+K_{-1+ir}(y) \right)
\ee
we see that the bound $\eqref{Kbound2}$ can be taken to be uniform for $\gamma=0$ and $|r| \geq 1$.
Given that, it is also clear that the integral expression for $M(s,t,\delta)$, given in \eqref{Mdef}, converges for any $\delta > 0$ when 
$\Re \, s > \hf+|\Im \, t|$.  (It also converges for $\delta = 0$, when 
 ${1\over 2}+ |\Im \, t| < \Re s < 1$, a fact that will be made use of later.)   The following proposition makes explicit the extent to which the integral on the right hand side of \eqref{ujP} approximates $M(s,t,\delta)$ when $\Re \, s > \hf+|\Im \, t|$. 

\begin{proposition}
\label{prop:Ylimit} 
Let $M(s,t,\delta) $ be as defined in \eqref{Mdef}.
Fix $\epsilon > 0,$ $1 > \delta > 0$, $Y \gg \max(\delta^{-1},h),$  $A \in \Z_{\geq 0},$ $t \in \C$  and $s \in \C$ with $\Re \, s >  \hf +|\Im \, t|+ \epsilon$ .   
Then
\beann
\left| M(s,t,\delta) -  \int_{Y^{-1}2 \pi \sh}^{Y 2 \pi \sh}  y^{s-\hf}e^{ y(1-\delta)}K_{it}(y){dy \over y}\right|   \ll
\frac{e^{-Y2\pi h \delta}(Y h)^{\Re \,s+A-2} }
{ \delta(1 + |t|)^A}  + \frac{(Y^{-1}  \sh)^{\Re \, s - \hf-|\Im \, t| }}{(1+|t|)^A},
\eeann
with the implied constant depending only on $A$, $s$, $|\Im \, t |$ and $\epsilon$.
\end{proposition}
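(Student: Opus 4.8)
The plan is to estimate the difference $M(s,t,\delta) - \int_{Y^{-1}2\pi h}^{Y 2\pi h}(\cdots)\,dy/y$ by splitting it into the two tails $\int_0^{Y^{-1}2\pi h}$ and $\int_{Y 2\pi h}^\infty$, and bounding each using the standard Bessel estimates \eqref{Kbound1} and \eqref{Kbound2}. First I would record the reduction coming from the integral representation \eqref{wellknown}, namely $|K_{it}(y)| \le K_{0}(y)$ when $t$ is real, and more generally $|K_{it}(y)| \le K_{|\Im t|}(y)$ when $t\in\C$, so that throughout one may replace $K_{it}$ by a real-order Bessel function; this is exactly what makes the constants uniform in $\Re\, t$. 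With $\gamma := |\Im\, t|$ one then has $|y^{s-\hf}e^{y(1-\delta)}K_{it}(y)| \le y^{\Re\, s - \hf}e^{y(1-\delta)}K_\gamma(y)$, and it remains to integrate this majorant over the two tails.

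For the \emph{small-$y$ tail} $\int_0^{Y^{-1}2\pi h}$, I would use \eqref{Kbound2}: on $(0,1)$ one has $K_\gamma(y)\ll y^{-\gamma}$ (the $\gamma=0$ logarithmic case is harmless since it is absorbed by an $\epsilon$), and $e^{y(1-\delta)}\ll 1$ there. Since $\Re\, s - \hf - \gamma > \epsilon > 0$ by hypothesis, the integrand is $\ll y^{\Re\, s - \hf - \gamma}$, which is integrable at $0$, and integrating from $0$ to $Y^{-1}2\pi h$ gives a bound $\ll (Y^{-1}h)^{\Re\, s - \hf - \gamma}$, matching the second term in the claimed estimate. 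One should only note that $Y^{-1}2\pi h$ need not be $\le 1$, but since $Y \gg 1$ this is fine for $h$ in the relevant range, or one simply splits off the bounded piece $[1, Y^{-1}2\pi h]$ if it is nonempty — that piece contributes to the large-$y$ analysis instead, and is dominated there. The gamma-factor decay $(1+|t|)^{-A}$ for this term: here I would be slightly more careful, using that for $y$ small one may integrate by parts $A$ times in the oscillatory variable $u$ of \eqref{wellknown}, or invoke a known bound $K_{it}(y) \ll_A (1+|t|)^{-A}$ valid for $y$ bounded away from the transition range $y\asymp |t|$; since we are deep in the $y \ll 1 \ll |t|$ regime this stationary-phase-free decay is standard.

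For the \emph{large-$y$ tail} $\int_{Y 2\pi h}^\infty$, I would use \eqref{Kbound1}: $K_\gamma(y)\ll e^{-y}/\sqrt y$ for $y\gg 1$, so the integrand is $\ll y^{\Re\, s - 2}e^{-y\delta}$. Thus $\int_{Y 2\pi h}^\infty y^{\Re\, s - 2}e^{-y\delta}\,\frac{dy}{y}$ — writing it out, $\int_{Y2\pi h}^\infty y^{\Re\, s - 2}e^{-y\delta}\,dy$ after the extra $1/y$ is folded in, this is an incomplete Gamma integral. The cleanest route is: on $[Y2\pi h,\infty)$ we have $y^{\Re\, s - 2} \le (Y2\pi h)^{\Re\, s - 2}$ if $\Re\, s \le 2$ (and if $\Re\, s > 2$ one pulls out $y^{\Re\,s-2}\le$ a constant times the exponential's own decay by enlarging $\delta$ slightly), so the integral is $\ll (Yh)^{\Re\, s - 2}\int_{Y2\pi h}^\infty e^{-y\delta}\,dy = (Yh)^{\Re\, s - 2}\delta^{-1}e^{-Y2\pi h\delta}$, matching the first term. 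The extra $A$ in the exponent there, $(Yh)^{\Re\,s + A - 2}$, gives slack: it is there precisely so that one can also extract the $(1+|t|)^{-A}$ gamma-factor decay, again via $A$-fold integration by parts in the Bessel integral \eqref{wellknown}, trading each power of $(1+|t|)^{-1}$ for a power of $y$.

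The main obstacle I anticipate is \emph{not} the tail estimates themselves — those are routine — but rather bookkeeping the uniform extraction of the $(1+|t|)^{-A}$ factor simultaneously with the correct $Y$, $h$, $\delta$ dependence, making sure the constants genuinely depend only on $A$, $s$, $|\Im\, t|$, $\epsilon$ and not on $t$, $Y$, $h$, $\delta$. Concretely, one must confirm that the $A$-fold integration by parts in \eqref{wellknown} does not interact badly with the factor $e^{y(1-\delta)}$ (it does not, since differentiation in $u$ leaves the $y$-dependence alone) and that the resulting boundary terms vanish or are dominated. Once that is arranged, the two displayed terms in the Proposition fall out by combining the small-$y$ and large-$y$ pieces, and the proof is complete.
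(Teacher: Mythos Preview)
Your proposal is correct and matches the paper's argument essentially step for step: split into the two tails, apply the standard Bessel bounds \eqref{Kbound1}, \eqref{Kbound2}, and extract the $(1+|t|)^{-A}$ decay by integrating by parts $A$ times in the representation \eqref{wellknown}, trading each gain of $(1+|t|)^{-1}$ for a factor of $y$ (which is exactly why the large-$y$ tail acquires the extra $(Yh)^A$). The only organizational difference is that the paper first packages the integration-by-parts step as a standalone lemma (the bound $K_{\gamma+ir}(y)\ll_{A,\gamma} e^{-y}y^{A-1/2}(1+|r|)^{-A}$ for $y\gg1$ and $K_{\gamma+ir}(y)\ll_{A,\gamma} y^{-|\gamma|}(1+|r|)^{-A}$ for $0<y<1$), then feeds the large tail into an incomplete Gamma function $\Gamma[\Re s+A-1,\,Y2\pi h\delta]$ and uses $\Gamma[s,x]\ll_s e^{-x}x^{s-1}$, whereas you estimate the tail integral a bit more directly; the content is the same.
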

\begin{proof}

We first need to bound $K_{it}(y)$ uniformly in $t$, for $y$ both large and small.   For this we have
\begin{lemma}\label{lemma:Ktbound}
 For  $A \in \Z_{\geq 0}$ and $|r|,y \gg1$,
$$
K_{\gamma+ir}(y) \ll_{A,\gamma} \frac{e^{-y}y^A}{\sqrt{y} (1 + |r|)^A}.
$$
For $A\in \Z_{\geq 0},|r| \gg1$, and $0<y \ll 1$, 
$$
K_{\gamma+ir}(y) \ll_{A,\gamma}
 \frac{y^{-|\gamma|}   }{(1 + |r|)^A}.
$$
\end{lemma}

To prove Lemma~\ref{lemma:Ktbound} we recall the definition \eqref{wellknown}.
The result follows after integrating by parts $A$ times, applying the bound \eqref{Kbound1} when $y$ is large and \eqref{Kbound2} when $y$ is small to  $K_{m+\gamma+ir}(y)$, with $|m| \leq A$. 

Applying Lemma~\ref{lemma:Ktbound} we find that 
\beann
 \int_{Y2 \pi \sh}^{\infty}  
 &
 &
 y^{s-\hf}e^{ y(1-\delta)}K_{it}(y){dy \over y}
 \ll_{A,|\Im \, t|} 
 \int_{Y2 \pi \sh}^{\infty} y^{\Re \,s-\hf}\frac{e^{ y(1-\delta)}e^{-y} y^{A-\hf}}{(1 + |t|)^A}\frac{dy}{y}
 \\
&\ll& 
{1\over(1 + |t|)^A} \int_{Y2 \pi \sh}^{\infty} y^{\Re \,s +A -1}e^{ -y\delta} \frac{dy}{y}
\\
&=& 
{1\over(1 + |t|)^A \gd^{\Re \, s+A-1}} 
\G\left[\Re \,s+A-1,Y2\pi h\delta\right]
 ,
 \eeann
where
$$
\G[s,x]:=\int_x^\infty e^{-y} y^s {dy\over y} \ll_s e^{-x}x^{\Re \, s-1},
$$
as $x\to\infty$. Thus when $Y \gg \delta^{-1}$, 
$$
\int_{Y2 \pi \sh}^{\infty}  
 y^{s-\hf}e^{ y(1-\delta)}K_{it}(y){dy \over y}
\ll_{A,s,|\Im \, t|} 
\frac{e^{-Y2\pi h \delta}(Y h)^{\Re\, s+A-2}} 
{ \delta(1 + |t|)^A }.
 $$

Similarly, applying the second part of the lemma,
$$
 \int_{0}^{Y^{-1}2 \pi \sh}  y^{s-\hf}e^{ y(1-\delta)}K_{it}(y){dy \over y} \ll_{A,|\Im \, t|} \frac{(Y^{-1} 2 \pi h)^{\Re \,s - \hf -|\Im \, t| }}{(1 + |t|)^A},
 $$
completing the proof of Proposition~\ref{prop:Ylimit}.

\end{proof}
\section{The analytic continuation of $M(s,t,\delta)$}\label{section:M}

In this section we will provide a meromorphic continuation of  $M(s,t,\delta)$  to $s\in \C$ with $\Re \, s \le \hf+|\Im\, t|$.   The following proposition locates the poles of $M(s,t,\delta)$ and gives three different estimates at points, $s$, where $M(s,t,\delta)$ is analytic. 

\begin{remark}
In the proposition below, we find that it is easier to work with $M(s,t,\delta)$ if we work with the variable $z=it$ rather than $t$ alone. So we consider instead $M(s,z/i,\delta)$. Thus the region $\Re \, s \le \hf+|\Im\, t|$ becomes  $\Re \, s \leq \frac{1}{2}+|\Re\,  z|$. Furthermore, to simplify error growth estimates, we let $t = \Im \, z$ in the statement of the proposition. This differs somewhat from the definition of $M(s,t,\delta)$, where $t$ can be complex, but since we keep $|\Im \ t|$ small we will find that the error estimates are accurate for both definitions of $t$ and so this notation is justified. 
\end{remark}

\begin{proposition}
\label{prop:Mbounds} 
Fix $\epsilon$ and $\delta$ such that  $1 > \epsilon, \delta >0$ and fix  $A \gg 1$ to be a non-integer. 
The function $M(s,z/i,\delta)$ has a meromorphic continuation to all $s$ in the half-plane $\Re \, s> \hf - A-|\Re \, z|$ and for $z \in \C$ where $|\Re \, z| < \tfrac 14$.  In this region $M(s,z/i,\delta)$ is analytic except, in the case when $z \neq 0$, for simple poles at the points $s = \hf \pm z - \ell$ for $\ell \in \Z_{\geq 0}$ with residues 
\bea\label{Mres}
\Res_{s =\hf \pm z - \ell} M(s,z/i,\delta)& =& \frac{(-1)^\ell \sqrt{\pi}2^{\ell \mp z} \Gamma(\hf \mp z + \ell) \Gamma (\pm 2 z - \ell) }{\ell ! \Gamma (\hf + z)\Gamma(\hf - z)}\nonumber \\
 &&+ \mathcal{O}_\ell((1+|t|)^{\pm \Re \, z -\hf}e^{-\frac{\pi}{2}|t|}\delta),
\eea
which are meromorphic in $z$ and here $t=\Im \, z $. There is no error term when $\ell=0$. When $z =0$ there are double poles at $s=\hf-\ell$ for $\ell \geq 0$. 


Considering the singularities as poles in $z$, we have that
\begin{align}
\Res_{z=\pm(s+\ell-\hf)} M(s,z/i,\delta)=& \notag \mp \frac{(-1)^\ell \sqrt{\pi}2^{\hf-s}\Gamma(1-s)\Gamma(2s+\ell-1)}{\ell!\Gamma(s+\ell)\Gamma(1-s-\ell)} \\
& +\Or_{\ell,\sigma}\lt(\left|\frac{\Gamma(2s+\ell)}{\Gamma(s+1)} \right| \delta\rt) \label{ires2}
\end{align}
These residues above have meromorphic continuations to all $s$ in the half-plane specified above. The error terms have poles at $s = -\hf-r$ for $2r \geq \ell-1$ as well, but retain the dependence on $\delta$ in the residue. There is no error term if $\ell=0$. 

When $s$ and $z$ are at least $\epsilon$ away from the poles of $M(s,z/i,\delta)$, we have the upper bound
\be\label{upper1}
M(s,z/i,\delta) \ll_{A,\epsilon} (1 + |t|)^{2 \Re\, s -2 - 2A}(1+|s|)^{4A- 3 \Re\, s +3}\delta^{-A }e^{-\frac\pi 2 |\Im \, s|}.
\ee

Restricting the half-plane to a vertical strip by imposing the additional condition $\Re \, s \leq \hf+|\Re \, z|$, and further requiring $s$ and $z$ be at least a distance $\epsilon$ away from poles, we have that when
$\delta (1 +|t|)^{2} \le 1$, 
\bea\label{deltasmall}
M(s,z/i,\delta)& =&    \frac{\sqrt{\pi} 2^{\hf-s}\G(s-\hf -z) \G(s-\hf + z) \G(1-s)}{\G(\hf - z)\G(\hf + z)}\nonumber \\  &+ &
\Or_{A,\epsilon} \left( (1 + |t|)^{2\Re \, s-2 +  2\epsilon}(1 + |s|)^{ \hf - 2\Re s+|\Re \, z|}e^{-\frac{\pi}{2}|\Im \, s|} \delta^\epsilon \right)
\eea
while for $\delta (1 +|t|)^{2} > 1$
\be\label{deltabig}
M(s,z/i,\delta) \ll_{A,\epsilon}  (1 + |t|)^{2\sigma -2}(1 + |s|)^{ 4A-3\Re \, s +3 }e^{-\frac{\pi}{2}|\Im \, s|}.
\ee

When $\Re \, z =0$ and $|z|,|\Im \, s| \gg 1$, $|s\pm z -\hf -m| =\epsilon>0$,  for $\epsilon$ small, we have
\beq \label{nearp1}
M(s,z/i,\delta) \ll_m \epsilon^{-1}(1+|s|)^{1-\Re \, s}e^{-\frac{\pi}{2}|\Im \, s|}.
\eeq
\end{proposition}

\begin{remark}
We note that $M(s,z/i,\delta)$ does not have a pole at $s=1$, in spite of the appearance of $\G(1-s)$ in \eqref{deltasmall}. It is cancelled by the error term in that formula, which does not vanish as $\delta \rightarrow 0$ when $\Re \, (s+z) > \hf$.
\end{remark}

\begin{proof}

As in the statement of the proposition, take $|\Re\, z| < \frac{1}{4}$, we will assume this throughout the proof. From \cite{GR} formula 6.621(3) we obtain, when $\Re \, s >\hf+|\Re\, z|$, 
\bea\label{Mbound1}
&&M(s,z/i,\gd) =  \int_0^\infty y^{s-\hf}e^{ y(1-\delta)}K_{z}(y){dy \over y}
\\ 
\nonumber
 && = \frac{\sqrt{\pi} 2^{z}\G(s-\hf +z)\G(s-\hf-z)}{\delta^{s - \hf + z}\G(s)}  F(s-\thf +z,\thf+z;s;1-\tfrac 2\delta),  
\eea
where $F(\alpha,\beta;\gamma ; x)$ is the usual hypergeometric function. By using the transformation formula 9.132(1), also in \cite{GR}, this expansion becomes
\begin{align}
M(s,z/i,\delta) =&  \frac{\sqrt{\pi} 2^{\hf-s}\G(s-\hf +z)\G(s-\hf-z)}{\G(1-s)\Gamma(\hf+z)\G(\hf-z)}F(s-\thf+z,s-\thf-z;s;\tfrac{\delta}{2}) \\
& + \left(\tfrac{\pi}{2}\right)^{\hf}\delta^{1-s} \G(s-1)F(\thf+z,\thf-z;2-s;\tfrac{\delta}{2}). \notag
\end{align}

Since the Taylor expansion for $F(\alpha,\beta;\gamma;x)$ converges for $|x|<1$ when $\Re(\alpha+\beta-\gamma)<1$, we see that for $ \Re\, s<2$, away from the obvious potential poles, we have the following expansion,
\begin{align} \label{newexpansion}
M(s,z/i,\delta) =&  \frac{\pi^{\hf} 2^{\hf-s}\Gamma(s)\G(1-s)}{\Gamma(\hf+z)\G(\hf-z)}\sum_{n=0}^\infty \frac{\Gamma(s-\hf+z+n)\G(s-\hf-z+n)}{n!\G(s+n)} \left(\frac{\delta}{2} \right)^n \\
& + \frac{\left(\frac{\pi}{2}\right)^{\hf}\delta^{1-s} \G(s-1)\G(2-s)}{\Gamma(\hf+z)\G(\hf-z)} \sum_{n=0}^\infty \frac{\Gamma(\hf+z+n)\G(\hf-z+n)}{n! \Gamma(2-s+n)}\left(\frac{\delta}{2} \right)^n. \notag
\end{align}
From this it is easy to compute residues. First we note that,
\beq
\Res_{s=1} M(s,z/i,\delta) = (1-1)\left(\frac{\pi}{2}\right)^{\hf} \sum_{n=0}^\infty \frac{\Gamma(\hf+z+n)\G(\hf-z+n)}{(n!)^2\Gamma(\hf+z)\G(\hf-z)}\left(\frac{\delta}{2} \right)^n =0
\eeq
and similarly for $m \in \Z_{>0}$, 
\begin{align}
\Res_{s=-m} M(s,z/i,\delta) =& \frac{(-1)^m \pi^{\hf}2^{\hf+m}\G(s)}{\G(\hf+z)\G(\hf-z)} \sum_{n=m+1}^{\infty} \frac{\G(n-m-\hf+z)\G(n-m-\hf-z)}{(n-m-1)!n!} \left(\frac{\delta}{2} \right)^n \notag \\
& - \frac{(-1)^m \pi^\hf 2^{-\hf} \delta^{m+1}}{\G(\hf+z)\G(\hf+z)} \sum_{n=0}^\infty \frac{\G(\hf+z+n)\G(\hf-z+n)}{n!(m+n+1)!} \left(\frac{\delta}{2} \right)^n,
\end{align}
where we see the above residue is also zero when we change the index of the first sum above from $n \to n+m+1$. So there are no poles for $s \in \Z_{\leq 0}$. Furthermore we can compute that
\begin{align} \label{rescomp}
\Res_{s=\hf\pm z -\ell} M(s,z/i,\delta) = &  \frac{\pi^{\hf}2^{\ell \mp z}\G(\hf\pm z -\ell)\G(\hf \mp z+\ell)}{\Gamma(\hf+z)\Gamma(\hf-z)} \\
& \times  \sum_{n=0}^\ell \frac{\Gamma(\pm 2z+n-\ell)(-1)^{\ell-n}}{n!(\ell-n)!\G(\hf \pm z +n-\ell)}\left(\frac{\delta}{2}\right)^n. \notag
\end{align}

We recall Stirling's formula for the gamma function. For bounded $x \in \mathbb{R}$, as $y \rightarrow \infty$,
\bea\label{Stirling}
|\G(x+iy)| \asymp (1+|y|)^{x - \hf}e^{-\frac\pi 2 |y|},
\eea
where $f(y) \asymp g(y)$ means that $f(y) \ll g(y)$ and $g(y) \ll f(y)$. Using this with \eqref{rescomp}, we see we can take $n=0$ as the main term and bound the remainder to get \eqref{Mres}. We see from \eqref{newexpansion} that when $z=0$ that are double poles at $s=\hf-\ell$ for $\ell \leq 0$ with Laurent series that do not vanish as $\delta \to 0$. 

 If we consider the poles as poles in $z$ rather than poles in $s$, we get that:
\begin{align}
\Res_{z=\pm(s+\ell-\hf)} M(s,z/i,\delta) =& \mp \frac{\pi^{\hf}2^{\hf-s} \Gamma(s)\Gamma(1-s)}{\Gamma(s+\ell)\Gamma(1-s-\ell)} \\
& \times  \sum_{n=0}^\ell \frac{\G(2s+\ell+n-1)(-1)^{\ell-n}}{n!(\ell-n)!\Gamma(s+n)}\left(\frac{\delta}{2}\right)^n \notag
\end{align}
and again taking $n=0$ and using Stirling's formula gives us \eqref{ires2}.


By  formula 9.113 in \cite{GR},  the hypergeometric function in \eqref{Mbound1} can be written as a contour integral
\begin{align}
\label{Mbound5}
F(s-&\thf+z, \thf+z;s;  1-\tfrac 2\delta)
 = \\ & \frac{\G(s)}{\G(s-\hf+z)\G(\hf + z)}
\lt( \frac{1}{2 \pi i} \int_C
  \frac{\G(s-\hf+u+z)\G(\hf +u+ z)\G(-u)\left(\frac{2}{\delta}-1\right)^u}{\G(s+u)} \ du \rt) \nonumber 
\end{align}
where $C$ goes from $-i\infty$ to $i\infty$ and curves so that the poles of $\G(-u)$ lie to the right of $C$ and the poles of $\G(s-\hf+u+z),\G(\hf +u+ z)$ lie to the left. For  $\Re (s+z) > \hf$, $C$ can simply be the vertical line $\Re \, u = a$, where  $a$ is chosen so that 
$\max(\hf -\Re(s+z),-\hf-\Re \, z) < a < 0$.  

The expression \eqref{Mbound5} has a meromorphic continuation to all $s \in \C$ and $z \in \C$ with $|\Re \, z| < \qtr$. Indeed, we only need to show this for the integral, and we see that it is possible to move the $(s,z)$ pair to nearly any other position in $\C^2$, adjusting $C$ along the way, so long as the points at $s+z= \hf -\ell$ and $z=-\hf-\ell$  for $\ell \geq 0$ are not crossed.  This combined with \eqref{Mbound1} provides a continuation of 
$M(s,z/i,\gd)$.    For general $(s,z) \in \C^2$ where $s+z-\hf \notin \mathbb{Z}_{\leq 0}$ with $\Re(s+z) \leq \hf$ and $|\Re \, z|<\qtr$, the curve $C$ can be described as follows.    Write
$\Re(s+z) = \hf - R$, with $R \ge 0$.   Choose $b$ such that $ 0 > b > \max (R - [R] -1, -\tfrac 14) $. The poles of $\G(\hf +u+ z)$ all lie to the left of $\Re \, u = b$ and
the poles of $\G(-u)$ all lie to the right of $\Re \, u = b$. The poles of $\G(s-\hf+u+z)$  to the right of 
$\Re \, u = b$ are at  $\Re\, u =  \hf - \Re(s+z) -\ell = R - \ell $ for $0 \le  \ell \le [R ]$.  We can make the curve $C$ follow the line $\Re \, u = b$ until $\Im \, u = -\Im(s+z)$ and then move and weave around the finitely many poles between $\Re \, u = R-[R] $ and $\Re \, u = R$, continuing to $i\infty$ along the line $\Re\, u = c$, where $R < c < [R] +1$. (See Figure~\ref{fig:curve}.) We observe that this is still permissible when $R \in \mathbb{Z}$, as long as $s+z-\hf \notin \mathbb{Z}_{\leq 0}$, as the poles of $\G(-u)$ and $\G(s-\hf+u+z)$ do not overlap. 
  \begin{figure}
\begin{center}
\includegraphics{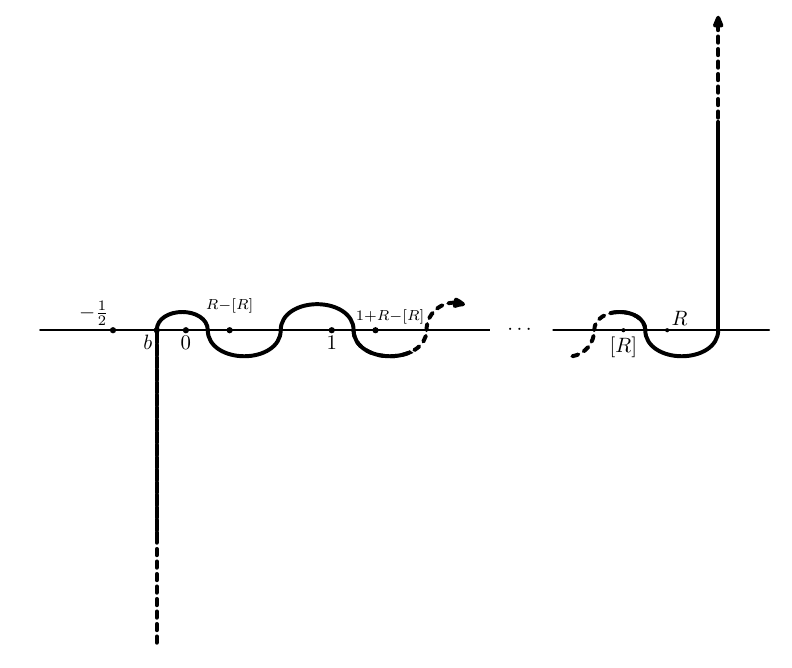}
\end{center}
\caption{The contour when $\Im \, s =0$ and $z =0$ }
\label{fig:curve}
\end{figure}

We continue to require that $s+z-\hf \notin \mathbb{Z}_{\leq 0}$. For $A \gg 1$ and $A \notin \Z$, fix any $R =\hf-\Re(s+z)$ such that $A \notin \mathbb{Z}$ and  $(A+R) \notin \Z$,  and $A>\hf+|\Re \, s|+|\Re \, z|+\epsilon$, and straighten $C$ to the vertical line $\Re \, u = A$.  In the process, poles of the integrand are passed over at $u = \ell$, for $0 \le \ell < A $.   The residue after passing the pole at $u = \ell$ is given by 
\be\label{line0}
 \frac{\sqrt{\pi}2^{z}\G(s - \hf - z)}{ \delta^{s - \hf + z}\G(\hf + z )} R(s,z,\ell),
\ee
where
\be
\label{ResDef}
R(s,z,\ell) = \frac{\G(s-\hf + z +\ell)\G(\hf + z+ \ell)\left(\frac2\delta -1 \right)^{ \ell}(-1)^{\ell+1}}{\ell!\G(s+\ell)}.
\ee
Thus
$$
 M(s,z/i,\delta) =  \frac{\sqrt{\pi}2^{z}\G(s - \hf - z)}{ \delta^{s - \hf + z}\G(\hf + z )} N_1(s,z,\delta),
$$
where
\bea\label{line2.5}
&&N_1(s,z,\delta)= -\sum_{0 \le \ell <A }R(s,z,\ell)\\ &+ & \frac{1}{2 \pi i} \int_{(A)}\frac{\G(s-\hf+u+z)\G(\hf +u+ z)\G(-u)\left(\frac{2}{\delta}-1\right)^u du}{\G(s+u)}.\nonumber
\eea 

The integral in \eqref{line2.5} contributes no poles in $s$ or $z$, as $\Re(s+z+u) -\hf=A-R >\epsilon$ and $\hf+\Re(z+u)>\qtr+A>0$. 
Inspecting lines \eqref{line0} to \eqref{line2.5} it is clear that the only possible locations of poles of 
$M(s,z/i,\delta)$ for $\hf+A>\Re(s+z) > \hf - A$ are of the form $s = \hf \pm z -m$ for  $ m\in \Z_{\geq 0}$. 

Repeat the argument above but now shift to the vertical line $\Re \, u = A +R$.  This time poles are passed over at $u = \ell$, for $ 0  \le \ell <  A + R$.  
Changing variables, 
$ u \rightarrow u - (s - \hf +z)$, and substituting back into \eqref{Mbound1}, we now have, after moving the contour and subtracting the residue contribution,
$$
 M(s,z/i,\delta) =  \frac{\sqrt{\pi}2^{z}\G(s - \hf - z)}{ \delta^{s - \hf + z}\G(\hf + z )} N_2(s,z,\delta),
$$
where
\bea\label{line2}
&&N_2(s,z,\delta) = -\sum_{0 \le \ell <A + R}R(s,z,\ell)\\ &+ & \frac{1}{2 \pi i} \int_{(A)}\frac{\G(u)\G(1-s+u)
\G(s-\hf +z-u)
\left(\frac{2}{\delta} - 1\right)^{u-s+\hf-z}du}{\G(\hf - z +u)}.\nonumber
\eea

Suppose $s$ and $z$ are at least some fixed $\epsilon>0$ away from the poles of $M(s,z/i,\delta)$. Writing $s = \sigma + ir$ and $u = A + iv$ and recalling that $\Im \, z =t$, the following upper bounds are easily confirmed, using \eqref{Stirling}: 
$$
R(s,z,\ell) \ll_{A,\epsilon} \delta^{ -\ell}e^{-\frac{\pi}{2}(|r+t|+|t|-|r|)}\frac{(1+|r+t|)^{\sigma+\Re \, z +\ell -1}(1+|t|)^{\Re \, z+\ell}}{(1+|r|)^{\sigma + \ell -\hf}},
$$

\be\label{expo}
 \frac{\sqrt{\pi}2^{z}\G(s - \hf - z)}{ \delta^{s - \hf + z}\G(\hf + z )}\ll_{A,\epsilon} \delta^R (1 +|r-t|)^{\sigma-\Re \, z - 1}
(1+|t|)^{-\Re \, z}e^{-\frac{\pi}{2}(|r-t|-|t|)},
\ee
and
\bea\label{thethird}
&&\frac{\G(u)\G(1-s+u)\G(s-\hf +it-u)
\left(\frac{2}{\delta} - 1\right)^{u-s+\hf-it}}{\G(\hf - it +u)}\\ &&\ll_{A,\epsilon} \frac{(1+|v|)^{A-\hf}(1+|v-r|)^{A+\hf-\sigma}
(1+|t-v+r|)^{\sigma +\Re \, z- 1 -A}}{\delta^{A+R}(1 +|t-v|)^{A-\Re \, z}e^{\frac{\pi}{2}(|v| +|v-r| + |r-v+t| -|t-v|)}}
\nonumber.
\eea

Combining the first two, when $|t| \geq 2|r|$ one has
\bea\label{part1}
&&\sum_{0 \le \ell <  A +R }R(s,z,\ell) \frac{\sqrt{\pi}2^{z}\G(s - \hf - z)}{ \delta^{s - \hf + z}\G(\hf + z )}\\ 
&&\ll_{A,\epsilon} (1+|t|)^{2A-\hf}(1+|r|)^{\hf-\sigma}\delta^{ -A}e^{-\frac{\pi}{2}|t|}.\nonumber
\eea
When $ |t| \leq  2|r|$, 
\bea\label{part101}
&&\sum_{0 \le \ell <  A +R }R(s,z,\ell) \frac{\sqrt{\pi}2^{z}\G(s - \hf - z)}{ \delta^{s - \hf + z}\G(\hf + z )}\\ 
&&\ll_{A,\epsilon} (1+|r|)^{A-\sigma}\delta^{ -A}e^{-\frac{\pi}{2}|r|}.\nonumber
\eea

By a case analysis of the possible respective sizes and signs of $v,r$ and $t$, we find that the exponent $|v| +|v-r| + |r-v+t| -|t-v|$
in \eqref{thethird} added to the exponent $|r-t|-|t|$ of \eqref{expo} is always $\ge |v| - |r|$ and $ |r|$. So we have that when $|t| \geq 2|r|$, which forces $s$ and $z$ to not be near the poles of $M(s,z/i,\delta)$,
\begin{align}
& \frac{\G(s - \hf - z)}{ \delta^{s - \hf + z}\G(\hf + z )}  \int_{A - i \infty}^{A + i \infty}\frac{\G(u)\G(1-s+u)
\G(s-\hf +z-u)  \notag
\left(\frac{2}{\delta} - 1\right)^{u-s+\hf-z}du}{\G(\hf - z +u)} \\
& \ \  \ll_A \delta^{-A}(1+|t|)^{ 2\sigma-2A-2}e^{-\frac{\pi}{2}|r|} \int_{A-2ir}^{A+2ir} (1+|v|)^{A-\hf}(1+|v-r|)^{A+\hf-\sigma} \ du  \notag \\
& \ \ \  \ + \delta^{-A}(1+|t|)^{\sigma-2\Re \, z-1}e^{-\frac{\pi}{2}|r|}\int\limits_{\Re \, u=A, |v|>2r} \frac{(1+|v|)^{2A-\sigma}}{(1+|t-v|)^{2A+1-\sigma -2 \Re \, z}} e^{-\frac{\pi}{2}(|v|-2|r|)} \ du \notag \\
& \ \  \ll_A \delta^{-A}(1+|t|)^{2\sigma-2A-2}(1+|r|)^{2A-\sigma+1}e^{-\frac{\pi}{2}|r|}.
\end{align}

It is easier to show by a similar argument that when $|r| \geq 2|t|$  and $s$ and $z$ are at least $\epsilon$ away from the poles of $M(s,z/i,\delta)$, then
\begin{align}
 \frac{\G(s - \hf - z)}{ \delta^{s - \hf + z}\G(\hf + z )} & \int_{(A)}\frac{\G(u)\G(1-s+u)
\G(s-\hf +z-u)  \notag
\left(\frac{2}{\delta} - 1\right)^{u-s+\hf-z}du}{\G(\hf - z +u)}  \\
& \ll_{A,\epsilon} \delta^{-A} (1+|r|)^{2A-\sigma+1}e^{-\frac{\pi}{2}|r|}.
\end{align}
Combining these two bounds with \eqref{part1}, and noting that  when $|t| \leq |r|$
$$
(1+|t|)^{2\sigma-2A-2}(1+|r|)^{4A-3\sigma+3} \gg (1+|r|)^{2A-\sigma+1},
$$
we get the upper bound \eqref{upper1}. 

To prove  \eqref{deltasmall} and \eqref{deltabig}, we return to the original contour integral in line \eqref{Mbound5}.  Recall that for the contour to exist we require $\hf-\Re(s+z) \notin \Z_{\geq 0}$. Also recall that $|\Re \, z| < \frac{1}{4}$. Again we write $\Re(s+z) = \hf - R$, but now begin with $R > \min(0,-2\Re \, z)$.  Instead of deforming the contour by moving it to the right, we instead straighten the contour until it becomes the line 
$\Re\, u = b$, where $b$ is as defined above, and simple poles are passed over at the points $u = \hf -s -\ell -z $, for $0 \le \ell \le [R]$.
 The residue of the integrand at $u = \hf -s -\ell -z $ is
$$
\frac{(-1)^\ell \G(1-s-\ell)\G(s-\hf + z + \ell)(\tfrac{2}{\delta} -1)^{\hf -s -z -\ell}}{\ell !\G(\hf -z -\ell)}.
$$
Thus
\be
\label{pre}
M(s,z/i,\delta) = C+D,
\ee 
where
\bea\label{pre1}
C& = &\sum_{ \ell =0}^{[R]} \frac{(-1)^\ell \sqrt{\pi}2^{z}\G(s-\hf -z)}{\ell !\G(\hf+z)\G(\hf -z -\ell)}\\ &\times & \G(1-s-\ell)\G(s-\thf + z + \ell)(2-\delta )^{\hf -s -z }(\tfrac 2\delta -1)^{ -\ell}\nonumber
\eea
and

\bea\label{pre2}
D& =& \frac{\sqrt{\pi}2^{z}\G(s - \hf - z)}{ \delta^{s - \hf + z}\G(\hf + z )}\\
& \times &
 \frac{1}{2 \pi i} \int_{b - i \infty}^{b + i \infty}\frac{\G(s-\hf +z +u)\G(\hf+z+u)
\G(-u)
\left(\frac{2}{\delta} - 1\right)^{u}}{\G(s +u)}du\nonumber.
\eea
Fix some small $\epsilon$ where $0< \epsilon < \qtr$. In the case where $\Re \, z >0$ and $0>R>-2\Re \, z$, this means $C=0$, but we can shift $b$ to $-2\Re \, z -\qtr+\epsilon$ and then we will retrieve the $\ell=0$ term.

 Again letting $s$ and $z$ be $\epsilon>0$ away from any poles and $R \geq 0$, an application of Stirling's formula and the binomial theorem to \eqref{pre}, \eqref{pre1}, \eqref{pre2} yields 
\bea\label{Mmain}
&&M(s,z/i,\delta) = \frac{ \sqrt{\pi}2^{\hf-s}\G(s-\hf +z)\Gamma(s-\hf-z)\Gamma(1-s)}{\G(\hf+z)
\G(\hf -z )\nonumber}\\ &+ &\Or_{A,\epsilon} \left( (1 + |t|)^{2 \sigma -2}(1 + |s|)^{\hf - 2\sigma+|\Re \, z|}e^{-\frac{\pi}{2}|\Im \, s|} \max_{\ell = 1,[R\rfloor} \left(\delta^\ell (1 + |t|)^{2 \ell}\right)\right)\nonumber\\
&+ &\Or_{A,\epsilon} \left( (1 + |t|)^{-1-2\Re \, z-2b}(1 + |s|)^{\hf - 2\sigma+|\Re \, z| }e^{-\frac{\pi}{2}|\Im \, s|} \delta^{\hf - \sigma-z -b}
\right),
\eea
where the main term comes from the $\ell = 0$ part of $C$. By shifting the line of integration of $D$, we can also get  roughly the expression when $\Re \, z> 0$ and $0 \geq R \geq -2\Re\, z$, replacing $b$ with $-2\Re\,z-\qtr+\epsilon$ and the first error term given above is eliminated. 

Now consider the two cases: $\delta (1 + |t|)^2 \le 1$ and  $\delta (1 + |t|)^2 > 1$.  Choose any 
$1>\epsilon > 0$.  In the first case
$$
\max_{\ell = 1,[R\rfloor} \left(\delta^\ell (1 + |t|)^{2 \ell}\right)
= \delta (1 + |t|)^{2 } \le \delta^\epsilon (1 + |t|)^{2 \epsilon}.
$$
The first error term of \eqref{Mmain} is then
$$
\ll (1 + |t|)^{2 \sigma -2+2\epsilon}(1 + |s|)^{\hf - 2\sigma+|\Re \, z|}e^{-\frac{\pi}{2}|\Im \, s|} \delta^\epsilon
$$
and the second is
$$
\ll (1 + |t|)^{-1}(1 + |s|)^{\hf - 2\sigma+|\Re \, z|}e^{-\frac{\pi}{2}|\Im \, s|} \delta^{\hf - \sigma +\epsilon -\epsilon }
$$
which is
$$
\ll (1 + |t|)^{2 \sigma-2 + 2 \epsilon}(1 + |s|)^{\hf - 2\sigma+|\Re \, z|a}e^{-\frac{\pi}{2}|\Im \, s|} \delta^\epsilon.
$$
Thus both pieces of the error term are 
$$
\Or \left( (1 + |t|)^{2 \sigma-2 + 2 \epsilon}(1 + |s|)^{\hf - 2\sigma+|\Re \, z| }e^{-\frac{\pi}{2}|\Im \, s|} \delta^\epsilon \right),
$$
giving us \eqref{deltasmall}.

In the second case,  $\delta (1 + |t|)^2 > 1$, we return to the first estimate, \eqref{upper1}.  Substituting
$\delta^{-A} < (1 + |t|)^{2A}$ we obtain 
$$
M(s,z/i,\delta) \ll  (1 + |t|)^{2\sigma -2}(1 + |s|)^{ 4A-3\sigma +3 }e^{-\frac{\pi}{2}|\Im \, s|},
$$
giving us \eqref{deltabig}.

Let $r =\Im \, s$. Let $\Re \, z =0$, and $|t|,|r| \gg 1$ and suppose that $|s+z-\hf-m|=\epsilon$ for small $\epsilon>0$, which forces $t \sim -r$. Then using \eqref{pre}, Stirling's Approximation and 
$$
\Gamma(s-\thf+z+m) \ll_m \epsilon^{-1},
$$
we have that 
\beq \label{nearp}
M(s,z/i,\delta) \ll_m  \epsilon^{-1} e^{-\frac{\pi}{2}|r|}.
\eeq
Similarly suppose that $|s-z-\hf-m|=\epsilon>0$, which forces $t \sim r$. Again using \eqref{pre}, Stirling's Approximation, and 
$$
\Gamma(s-\thf-z) \ll_m \epsilon^{-1},
$$
we have that
\beq
M(s,z/i,\delta) \ll_m \epsilon^{-1} (1+|r|)^{1-\sigma}e^{-\frac{\pi}{2}|r|}
\eeq
which together with \eqref{nearp} gives \eqref{nearp1}, completing the proof of Proposition~\ref{prop:Mbounds}.
\end{proof}

\section{The limit as $Y \rightarrow \infty$} \label{sect:why}
We are now in a position to analyze the uniformity of convergence of the spectral decomposition of 
$\cI_{\ell_1,\ell_2,Y,\gd}(s;{\sh})$ given in \eqref{otherhand}.
We assume at first that $\Re \, s > \hf$.
Recall that by \eqref{otherhand} we may write
$$
\cI_{\ell_1,\ell_2,Y,\gd}(s;{\sh}) = \cI_{\ell_1,\ell_2,Y,\gd}^{\,\text{cusp}}(s;{\sh})+ \cI_{\ell_1,\ell_2,Y,\gd}^{\,\text{cts}}(s;{\sh}),
$$
where
$$
\cI_{\ell_1,\ell_2,Y,\gd}^{\,\text{cusp}}(s;{\sh}) 
	= 
\sum_j\frac{\overline{\rho_j(-h)}}{\vol(2\pi\sh )^{s-\hf}} 
\int_{Y^{-1}2 \pi \sh}^{Y 2 \pi \sh}  y^{s-\hf}e^{ y(1-\delta)}K_{it_j}(y){dy \over y}\overline{\<V,u_j\>}
$$
and
\begin{multline*}
\cI_{\ell_1,\ell_2,Y,\gd}^{\,\text{cts}}(s;{\sh}) = \frac{1}{4 \pi \vol(2\pi\sh )^{s-\hf}} \sum_\cusp \int_{-\infty}^\infty \overline{\left( \vol \frac{2\pi^{\frac{1}{2}+it} |h|^{it} \rho_\cusp\left(\frac{1}{2}+it, -h\right)}{\Gamma\left(\frac{1}{2}+it\right)}\right)}
	\\  
	\times\int_{Y^{-1}2 \pi \sh}^{Y2 \pi \sh} y^{s-\hf}e^{ y(1-\delta)}K_{it}(y){dy \over y}\overline{\<V,E_\cusp(*,\thf + it)\>} \  dt.
\end{multline*}

Of key importance is the average size of $\overline{\rho_j(-h)}$ and of 
$\overline{\<V,u_j\>}$.  The second is addressed by the following proposition, a generalization of \cite{BR4},  which is proved in the Appendix by Andre Reznikov  (Theorem  B):
\begin{proposition}\label{prop:Rezresult}
With the notation as given above
$$
\sum_{\mathfrak{a}}\int_{-T}^{T} \vol \left| \<V,E_\cusp(*,\thf + it)\>\right|^2e^{\pi |t|}dt+\sum_{|t_j| \sim T}|\<V,u_j\>|^2e^{\pi |t_j|} \ll (\ell_1 \ell_2)^{-k}T^{2k}\log (T) .
$$
\end{proposition}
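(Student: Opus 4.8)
The plan is to estimate the cuspidal sum and the Eisenstein integral separately, in both cases using that $\langle V_{f,g}(\cdot;\ell_1,\ell_2),u_j\rangle$ --- and likewise the inner product against $E_\cusp(\cdot,\thf+it)$ --- is a triple period of $\overline{f(\ell_1\cdot)}$, $g(\ell_2\cdot)$ and the third form, and that because of the normalization $V=y^\wt\overline{f(\ell_1z)}g(\ell_2z)$ this period factors, up to arithmetic data, through an explicit archimedean integral of the shape of a ratio of Gamma factors. The weight $e^{\pi|t_j|}$ is exactly what cancels the exponential decay of that archimedean factor, so (since bare Parseval only gives the $T$-independent bound $\sum_j|\langle V,u_j\rangle|^2\le\|V\|^2<\infty$) the statement is a Lindel\"of-on-average estimate, consistent with Weyl's law, for the normalized triple products attached to $f$, $g$ and the spectrum.

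The continuous-spectrum term can be handled classically. For $\Re\,s$ large one unfolds $E_\cusp(\cdot,s)$ against $V$; the $x$-integral forces $m_1\ell_1=m_2\ell_2$ and the $y$-integral produces $\Gamma(s+\wt-1)(4\pi)^{-(s+\wt-1)}$ times a Dirichlet series which, after separating the Hecke normalization $a(m)=A(m)m^{(\wt-1)/2}$, is essentially the Rankin--Selberg convolution of $f$ and $\bar g$ up to finitely many Euler factors at primes dividing $\ell_1\ell_2N_0$ and elementary factors in $\ell_1,\ell_2$. Continuing to $s=\thf+it$ and applying Stirling, $|\Gamma(\wt-\thf+it)|^2e^{\pi|t|}\asymp(1+|t|)^{2\wt-2}$, so the Eisenstein contribution is $\ll_{N_0,\epsilon}(\ell_1\ell_2)^{-\wt+\epsilon}\int_{-T}^{T}(1+|t|)^{2\wt-2}|L(\thf+it,f\otimes\bar g)|^2\,dt$, which is $\ll(\ell_1\ell_2)^{-\wt+\epsilon}T^{2\wt}\log T$ by the classical second moment bound on the critical line for a degree-four Rankin--Selberg $L$-function (the clean $\log$ reflecting the simple pole of the convolution at $s=1$). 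Deligne's bound is available for the fixed holomorphic $f,g$, so no progress toward Ramanujan is needed here.

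The cuspidal sum is the heart of the matter, and I would follow the ``Rankin--Selberg without unfolding'' strategy of \cite{Rez}. By uniqueness of $G$-invariant trilinear functionals on $\overline{\pi_f}\otimes\pi_g\otimes\pi_{u_j}$, the automorphic triple period equals a global constant $c_j$ times the model trilinear functional evaluated at the distinguished vectors --- the weight-$\wt$ lowest-weight vectors of the two discrete series and the spherical vector of the principal series with parameter $it_j$; evaluating the model functional as a Beta-type integral over $\mathbb{P}^1(\R)$ and applying Stirling gives $|\langle V,u_j\rangle|^2\asymp(1+|t_j|)^{2\wt-2}e^{-\pi|t_j|}|c_j|^2$ for real $t_j$, the finitely many exceptional $u_j$ contributing $O_{N_0}(1)$. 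It then remains to prove the spectral mean-value bound $\sum_{|t_j|\sim T}|c_j|^2\ll_{N_0,\epsilon}(\ell_1\ell_2)^{-\wt+\epsilon}T^{2}\log T$. For this I would write $\sum_j|c_j|^2\Phi(t_j)$, with $\Phi$ a smooth weight localized to $|t|\sim T$, as the pairing of $|V|^2$ with an automorphic point-pair kernel whose Harish--Chandra transform is $\Phi$ times the compensating archimedean factor, and bound its geometric side by the Rankin--Selberg integral $\iint_{\G\bk\H}|V(z)|^2E_\cusp(z,s)\,\dz$; the latter unfolds to $\Gamma$-factors times $\sum_n|\text{coeff}|^2n^{-s}$ and, again after Deligne, is bounded uniformly for $\Re\,s$ near $1$, producing the main term of size $T^2$ from Weyl's law and the $\log T$ from the pole at $s=1$. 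Tracking $\vol\asymp N=N_0\ell_1\ell_2/(\ell_1,\ell_2)$ and the Hecke normalization through the unfolding yields the $(\ell_1\ell_2)^{-\wt}$ saving. Alternatively one can reach the same point via Watson's triple product formula, identifying $|c_j|^2$ with $L(\thf,f\times g\times u_j)/L(1,\sym^2 u_j)$ up to constants and then applying the Kuznetsov-type spectral large sieve (as in \cite{DI}).

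The main obstacle, I expect, is the uniformity of the archimedean analysis: one needs the asymptotics of the model trilinear functional --- equivalently of the relevant $K$-Bessel and Gamma-quotient transforms --- uniformly in $t_j$ and with matching upper and lower bounds, since the weight $e^{\pi|t_j|}$ must be neither over- nor under-counted; and in the mean-value step, choosing the point-pair kernel so that it genuinely localizes to the dyadic window $|t|\sim T$ while keeping its geometric side controlled by the single Rankin--Selberg integral above (rather than losing a power of $T$) is delicate. A secondary, purely bookkeeping, difficulty is extracting the exact exponent $(\ell_1\ell_2)^{-\wt}$ from the interplay of the level $N$, the volume $\vol$, the operators $f\mapsto f(\ell_1\cdot)$, and the oldform structure at primes dividing $\ell_1\ell_2$.
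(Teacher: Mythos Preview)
Your proposal correctly identifies the representation-theoretic framework --- uniqueness of the trilinear functional, factorization $\langle V,u_j\rangle=a_j\cdot l^{\rm mod}(e_k\otimes e_{-k}\otimes e_0)$, and the Stirling evaluation $|l^{\rm mod}|^2\asymp|\lambda_j|^{2k-2}e^{-\pi|\lambda_j|}$ --- and this is indeed the backbone of the paper's proof (given in the Appendix, Theorem~B). But your endgame for the mean-value bound on the $|a_j|^2$ diverges from the paper and leaves a real gap in the $\ell_1,\ell_2$ dependence, which you flag only as ``secondary bookkeeping''.

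The paper does \emph{not} use a point-pair kernel, a Rankin--Selberg integral of $|V|^2$, or Watson plus a spectral large sieve. Once the proportionality constants $a_j(\ell)$ are isolated, the bound comes from the positivity inequality $\sum_j|a_j(\ell)|^2 H_j(u_T)\le H_\Delta^\ell(u_T)$ with $H_\Delta^\ell(u)=\int_{X_\ell}|{}^\ell\phi_{v}\cdot{}^\ell\phi'_{v'}|^2$. A test vector $u_T=v_T\otimes v'_T$ is constructed so that $H_j(u_T)\gg 1$ for $T/2\le|\lambda_j|\le T$, and the whole content of the $\ell$-uniformity is that $H_\Delta^\ell(u_T)\ll T^2$ with implied constant depending only on $N_0$. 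The mechanism is elementary but decisive: one takes $v_T=T^{1/2}\,\pi(a_T)\alpha$ for a \emph{fixed} smooth vector $\alpha$, and since raising the level acts by ${}^\ell\phi_v(x)=\phi_v(a_\ell x)$ one has $\sup_{X_\ell}|{}^\ell\phi_{v_T}|=\sup_{X_0}|\phi_{v_T}|=T^{1/2}\sup_{X_0}|\phi_\alpha|$, independent of $\ell$. Combining this sup-norm bound with $\|{}^\ell\phi_{v_T}\|_{L^2}^2\asymp T$ gives $H_\Delta^\ell(u_T)\ll T^2$ at once. Both the cuspidal sum and the Eisenstein integral are controlled simultaneously by this single inequality; the paper does not unfold the continuous spectrum separately.

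Your proposed routes do not obviously deliver this uniformity. A pre-trace/point-pair argument typically picks up the injectivity radius of $X_\ell$, which degenerates with $\ell$ (this is exactly the defect of the earlier method of \cite{BR-moscj} that the Appendix is designed to repair). The Watson/Ichino route requires controlling the local triple-product factors at primes dividing $\ell_1\ell_2 N_0$ uniformly across an oldform basis, which is genuine work rather than bookkeeping. Since the $(\ell_1\ell_2)^{-k}$ saving is precisely what makes the later amplification in the paper succeed, this is the step where your sketch is missing the key idea.
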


\begin{remark}
An explicit analysis by Hansen, \cite{Ha}, of some formulas of Ichino, \cite{Ich}, reveals that Proposition~\ref{prop:Rezresult} is essentially stating that 
\be\label{Lindresult}
\frac{L(\thf, f \otimes \bar g\otimes \bar u_j)}{L(1,\mathrm{Ad}\,u_j)}
\ee
is close to constant on average.
\end{remark}

An application of the Kuznetsov trace formula (see, for example, \cite{IK}, Theorem 16.8, line (16.56)) shows that 
\be\label{rhosum0}
 \sum_{|t_j| \sim T} |\overline{\rho_j(-h)}|^2e^{-\pi |t_j|} \ll \ell_1\ell_2T^2 +
 T^{7/4}\sigma_0(h) \sigma_0(\ell_1\ell_2) (h,\ell_1\ell_2)^{\hf}h^{\hf }\log^2(3h).
\ee
Alternatively, for $(h,\ell_1\ell_2N_0) =1$, one can factor $\rho_j(-h) = \rho_j(-1)\lambda_j(h)$, where $\lambda_j(h)$ is the Hecke eigenvalue at $h$ of the new form associated to $u_j$, of level dividing $N$.   Note that if $u_j$ is an old form it is possible that  $\rho_j(-1) = 0$.  One then has
\be\label{rhosum1}
 \sum_{|t_j| \sim T} |\overline{\rho_j(-h)}|^2e^{-\pi |t_j|} \ll h^{2\theta} \sum_{|t_j| \sim T} |\overline{\rho_j(-1)}|^2e^{-\pi |t_j|} \ll
 h^{2\theta} \ell_1\ell_2T^2,
\ee
where $\theta$ represents the best progress toward the (non-archimedean) Ramanujan conjecture.

Proposition~\ref{prop:Rezresult},  Cauchy-Schwarz,  and \eqref{rhosum0} give us
\begin{align}\label{partbound}
&\sum_{|t_j| \sim T}\overline{\rho_j(-h)}\overline{\<V,u_j\>} 
\ll \left( \sum_{|t_j| \sim T} |\rho_j(-h)|^2e^{-\pi |t_j|}\right)^{\hf}
\left( \sum_{|t_j| \sim T}|\<V,u_j\>|^2e^{\pi |t_j|} \right)^{\hf}
\\ \notag & \ll \left(  \ell_1\ell_2T^2 +
 T^{7/4} \sigma_0(\ell_1\ell_2) (h,\ell_1\ell_2)^{\hf}h^{\hf }\log^2(3h) \right)^{\hf}
\left( (\ell_1 \ell_2)^{-k}T^{2k}\log(T)\right)^{\hf}
   \\ \notag & \ll  (\ell_1 \ell_2)^{\hf-\kt}T^{1+k}\log(T)^{\frac12}+(\ell_1 \ell_2)^{-\kt}T^{7/8+k}\left( \sigma_0(\ell_1\ell_2) (h,\ell_1\ell_2)^{\hf}h^{\hf }\log^2(3h)\log(T) \right)^{\hf}.
\end{align}

Note that for $\ell_1,\ell_2$ prime, $\ell_1,\ell_2 \gg h^{1/4}\log(3h)$, with $(h,\ell_1\ell_2)=1$, the first of the two terms on the right hand side of \eqref{partbound} dominates.

We summarize the above discussion in the following proposition. 
\begin{proposition}\label{prop:innerup}
For $(h,N)=1$,
$$
\sum_{|t_j| \sim T}\overline{\rho_j(-h)}\overline{\<V,u_j\>} \\
   \ll  h^\theta(\ell_1 \ell_2)^{\hf-\kt}T^{1+k}\log(T)^\frac12.
$$
In addition,
\begin{multline*}
\sum_{|t_j| \sim T}\overline{\rho_j(-h)}\overline{\<V,u_j\>} \\
   \ll  (\ell_1 \ell_2)^{\hf-\kt}T^{1+k}\log(T)^\frac12+(\ell_1 \ell_2)^{-\kt}T^{7/8+k}\left( \sigma_0(\ell_1\ell_2) (h,\ell_1\ell_2)^{\hf}h^{\hf }\log^2(3h)\log(T) \right)^{\hf}.
\end{multline*}
\end{proposition}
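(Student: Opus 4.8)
The plan is to obtain all three estimates from one application of the Cauchy--Schwarz inequality, exactly as foreshadowed in the discussion preceding the statement. Splitting the dyadic spectral sum and inserting the harmless weights $e^{\mp\pi|t_j|}$ (chosen precisely so that they cancel), I would write
$$
\sum_{|t_j|\sim T}\overline{\rho_j(-h)}\,\overline{\<V,u_j\>}
\ll
\Bigl(\sum_{|t_j|\sim T}|\rho_j(-h)|^2 e^{-\pi|t_j|}\Bigr)^{\hf}
\Bigl(\sum_{|t_j|\sim T}|\<V,u_j\>|^2 e^{\pi|t_j|}\Bigr)^{\hf},
$$
so that the problem reduces to bounding each factor separately. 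The second factor is handled immediately by Proposition~\ref{prop:Rezresult} (the result proved in Reznikov's appendix), which gives $\ll_{N_0}(\ell_1\ell_2)^{-k}T^{2k}\log T$; this is the only deep ingredient, and the rest of the argument is essentially bookkeeping.

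For the first factor I would invoke the two Kuznetsov-type bounds \eqref{rhosum0}. The sharper, first conclusion of the proposition uses the Hecke factorization $\rho_j(-h)=\rho_j(-1)\lambda_j(h)$, valid when $(h,\ell_1\ell_2 N_0)=1$, together with the pointwise bound $|\lambda_j(h)|\ll h^{\theta}$ toward Ramanujan; applying Kuznetsov only to the $h=1$ family, where the off-diagonal Kloosterman contribution is absent, gives $\sum_{|t_j|\sim T}|\rho_j(-h)|^2 e^{-\pi|t_j|}\ll h^{2\theta}\ell_1\ell_2 T^2$, and feeding this into the Cauchy--Schwarz bound yields the stated $h^{\theta}(\ell_1\ell_2)^{\hf-k/2}T^{1+k}(\log T)^{\hf}$. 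The second, unconditional conclusion uses instead the full bound in \eqref{rhosum0}, with its diagonal term $\ell_1\ell_2 T^2$ and Kloosterman term $T^{7/4}\sigma_0(\ell_1\ell_2)(h,\ell_1\ell_2)^{\hf}h^{\hf}\log^2(3h)$; taking square roots and multiplying by the Reznikov factor produces the two-term bound displayed. The third conclusion is then just the remark that once $\ell_1,\ell_2$ are prime with $\ell_1,\ell_2\gg h^{1/4}\log h$ and $(h,\ell_1\ell_2)=1$, the diagonal term dominates the Kloosterman term, so the two-term bound collapses to its first term.

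The main obstacle is already dealt with elsewhere: it is Proposition~\ref{prop:Rezresult} itself, a statement about the spectral average of $|\<V,u_j\>|^2$ (equivalently, about the average size of the triple-product $L$-values in \eqref{Lindresult}). Granting that and the standard Kuznetsov estimates \eqref{rhosum0}, the only points that require care here are: keeping the dyadic localization $|t_j|\sim T$ consistent on both sides, and choosing the Kuznetsov test function so that its archimedean transform localizes to $|t_j|\sim T$ and matches the weight $e^{-\pi|t_j|}$; tracking the coprimality hypotheses, since $(h,N)=1$ is what licenses the Hecke factorization while $(h,\ell_1\ell_2)=1$ suffices for the relevant form of \eqref{rhosum0}; disposing harmlessly of the possibility $\rho_j(-1)=0$ for oldforms (it only helps, since we bound from above); and the elementary comparison of $\ell_1\ell_2 T^2$ with the Kloosterman term under the size condition $\ell_1,\ell_2\gg h^{1/4}\log h$.
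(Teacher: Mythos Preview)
Your proposal is correct and follows essentially the same route as the paper: Cauchy--Schwarz with the weights $e^{\mp\pi|t_j|}$, Proposition~\ref{prop:Rezresult} for the triple-product factor, and the two forms of \eqref{rhosum0} (Kuznetsov directly, or via the Hecke factorization $\rho_j(-h)=\rho_j(-1)\lambda_j(h)$ under coprimality) for the Fourier-coefficient factor. The paper's proof is in fact just the paragraph culminating in \eqref{partbound}, and the proposition is stated as a summary of that discussion; your identification of which hypothesis feeds which conclusion, and of the elementary comparison giving the third bound, matches the paper exactly.
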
  
Similarly, Proposition~\ref{prop:Rezresult},  Cauchy-Schwarz, and \eqref{Blomer} imply that 
\begin{multline}\label{conttriplebound}
\int_{-T}^T\left|\sum_\cusp \frac{ \vol 2\pi^{\frac{1}{2}-it} |h|^{-it} \rho_\cusp\left(\frac{1}{2}-it, -h\right)
\overline{\<V,E_\cusp(z,\thf+it)\>}}{\Gamma(\hf - it)\zeta(1+2it)}\right| dt  \ll  (\ell_1 \ell_2)^{-\kt+\epsilon}T^{k+\hf+\epsilon}.
\end{multline}

Recalling \eqref{otherhand}, \eqref{ujP} and \eqref{EP},
we define $\cI_{\ell_1,\ell_2,\delta}(s;{\sh})$ for $\Re \, s > \hf$ by
\be\label{Isum}
\cI_{\ell_1,\ell_2,\gd}(s;{\sh}) = \cI_{\ell_1,\ell_2,\gd}^{\,\text{cusp}}(s;{\sh})+ \cI_{\ell_1,\ell_2,\gd}^{\,\text{cts}}(s;{\sh}),
\ee
where
\be\label{YinftyDefcusp}
\cI_{\ell_1,\ell_2,\gd}^{\,\text{cusp}}(s;{\sh})  = 
\sum_j\frac{\overline{\rho_j(-h)}}{\vol(2\pi\sh )^{s-\hf}} 
  M(s,t_j,\delta)\overline{\<V,u_j\>}
\ee
and
\begin{multline}\label{YinftyDefcont}
\cI_{\ell_1,\ell_2,\gd}^{\,\text{cts}}(s;{\sh})
= \frac{1}{4 \pi}\sum_\cusp\int_{-\infty}^\infty  \frac{1}{\vol(2\pi\sh )^{s-\hf} } 
\left(\vol \frac{2\pi^{\frac{1}{2}-it} |h|^{-it} \rho_\cusp\left(\frac{1}{2}-it, -h\right)}{\Gamma\left(\frac{1}{2}-it\right)}\right)\\ \times  M(s,t,\delta)\overline{\<V,E_\cusp(*,\thf + it)\>}dt.
\end{multline}
By the first upper bound of Proposition~\ref{prop:Mbounds}, \eqref{upper1}, for any fixed $s$ and $\delta$, with $s$ at least a distance $\epsilon$ from the poles, $M(s,t,\delta)$ decays faster than any power of $|t|$ as 
$|t| \rightarrow \infty$. Using this, another application of Proposition~\ref{prop:Rezresult} and Cauchy-Schwarz, we see that the integral over $t$ in \eqref{YinftyDefcont} converges for any fixed $s$ such that $\Re \, s \neq \hf-n$ for $n \geq 0$.
However, the integral representation of $\cI_{\ell_1,\ell_2,\gd}^{\,\text{cts}}(s;{\sh})$ only holds for when $\Re \, s > \hf$, and so we investigate its meromorphic continuation to all $\Re \, s > \hf-A$. Changing variables from $t \to z/i$, we rewrite $\cI_{\ell_1,\ell_2,\gd}^{\,\text{cts}}(s;{\sh})$ as
\begin{align}
&\cI_{\ell_1,\ell_2,\gd}^{\,\text{cts}}(s;{\sh})  \label{eq3}\\
&=  \sum_\cusp \frac{1}{2\pi i} \int_{(0)} \frac{1}{\vol(2\pi\sh )^{s-\hf} } \left(\vol \frac{\pi^{\frac{1}{2}-z} |h|^{-z} \rho_\cusp\left(\frac{1}{2}-z, -h\right)}{\Gamma\left(\frac{1}{2}-z\right)\zeta^*(1+2z)}\right)M(s,z/i,\delta)\ol{\langle V,E^*_\cusp(*,\thf+z) \rangle}  \ dz. \notag
\end{align}
where $\zeta^*(z)=\pi^{-\frac{z}{2}}\Gamma(\tfrac{z}{2})\zeta(z)$ is the completed Riemann zeta function and  $E^*_\cusp(s,z)=\zeta^*(2z)E_\cusp(s,z)$. Consider $s$ in the region of $\C$ to the right of $\Re \,s =\hf$ and left of $\hf +C$, where $C$ is a curve of the form $C(x)=\frac{c}{\log(2+|x|)}+ix$ for some constant $c>0$ such that $\zeta^*(1+2s)$ has no zeros on, or to the right of, $-\overline{C}$.  We can then shift the line of integration in \eqref{eq3} to the left, moving over a pole at $z=\hf-s$ to get
\begin{multline}\label{shift1}
	\cI_{\ell_1,\ell_2,\gd}^{\,\text{cts}}(s;{\sh})
	=   
	\\
	\sum_\cusp  
	\lt[  \frac{1}{2\pi i} 
	\int_{-\ol{C}} 
	 \frac{1}{\vol(2\pi\sh )^{s-\hf} } 
	\left(\vol \frac{\pi^{\frac{1}{2}-z} |h|^{-z} \rho_\cusp\left(\frac{1}{2}-z, -h\right)}
	{\Gamma\left(\frac{1}{2}-z\right)\zeta^*(1+2z)}\right)M(s,z/iŒ,\delta)
	\ol{\langle V,E^*_\cusp(*,\thf+z) \rangle}  \ dz \rt.  \\
	+\lt. 
	 \frac{1}{\vol(2\pi\sh )^{s-\hf} } 
	\left(\vol \frac{\pi^{s} |h|^{-\hf+s} \rho_\cusp\left(s, -h\right)}
	{\Gamma\left(s\right)\zeta^*(2-2s)}\right) 
	\frac{\sqrt\pi 2^{\hf-s} \Gamma(2s-1)}{\Gamma(s)}
	\ol{\langle V,E^*_\cusp(*,1-s) \rangle}\rt]. 
\end{multline}
The residue is obtained from \eqref{ires2} in Proposition \ref{prop:Mbounds}. We see that \eqref{shift1} defines a meromorphic function for $s$ between $\hf-C$ and $\hf +C$. So shifting $s$ left such that it is between $\hf-C$ and $(0)$, we can shift the line of integration for $z$ back to $0$ past the pole at $z=s-\hf$. This gives us
\begin{multline*}
	\cI_{\ell_1,\ell_2,\gd}^{\,\text{cts}}(s;{\sh})
	= \\
	\frac{1}{\vol(2\pi\sh )^{s-\hf} } 
 	\sum_\cusp  
	\lt[  \frac{1}{2\pi i} \int_{(0)} 
	\left(\vol \frac{\pi^{\frac{1}{2}-z} |h|^{-z} \rho_\cusp\left(\frac{1}{2}-z, -h\right)}
	{\Gamma\left(\frac{1}{2}-z\right)\zeta^*(1+2z)}\right) 
	M(s,z/iŒ,\delta)\ol{\langle V,E^*_\cusp(*,\thf+z) \rangle}  \ dz \rt.  
	\\
	+ \lt(\vol
	\frac{ \pi^{1-s} |h|^{-s+\hf} \rho_\cusp \left(1-s, -h\right) } 
	{ \Gamma(1-s) \zeta^*(2s) }  		
	\overline{\langle V, E_\cusp^*\left(*, s\right)\rangle }
	+
	\vol
	\frac{\pi^{s} |h|^{-\hf+s} \rho_\cusp\left(s, -h\right)}
	{\Gamma\left(s\right)\zeta^*(2-2s)} 
	\ol{\langle V,E^*_\cusp(*,1-s) \rangle}
	\rt)
	\\
	\lt. \times 
	\frac{\sqrt\pi 2^{\frac{1}{2}-s} \Gamma(2s-1)}{\Gamma(s)}
	\rt]   
\end{multline*}
for $-\hf<\Re\, s<\hf$. This argument can be reproduced as we pass over every line at $s=\hf-n$, and so proceeding recursively, we get that for $-\hf-n < \Re \, s \leq \hf -n$, 
\begin{align} \label{icts}
&	\cI_{\ell_1,\ell_2,\gd}^{\,\text{cts}}(s;{\sh}) &
	\\
&	=\notag
	 \frac{1}{\vol(2\pi\sh )^{s-\hf} } 
   \sum_\cusp  & \mkern -10  mu
   \lt[  
   \frac{1}{2\pi i} 
   \int\limits_{-\overline{C_\sigma}} 
   \left(\vol \frac{\pi^{\frac{1}{2}-z} |h|^{-z} \rho_\cusp \left(\frac{1}{2}-z, -h\right)}{\Gamma\left(\frac{1}{2}-z\right)\zeta^*(1+2z)}\right) 
   M(s,z/iŒ,\delta)
   \ol{\langle V,E^*_\cusp (*,\thf+z) \rangle}  
   \ dz \rt. &
   \\ && \notag
	+ 
	\sum_{\ell=0}^{\lfloor\hf-\sigma\rfloor} 
	\lt(
	\vol 
	\frac{ \pi^{1-s-\ell} |h|^{-s-\ell +\hf} \rho_\cusp \left(1-s-\ell, -h\right) } { \Gamma(1-s-\ell) \zeta^*(2s+2\ell) }   
	\overline{\langle V, E_\cusp^*\left(*, s+\ell \right)\rangle } 
	\rt. &
	\\ && \notag 
	+ 
	\lt. 
	(1-\pmb{\delta}_{\sigma, \ell} )
	\vol 
	\frac{\pi^{s+\ell } |h|^{-\hf+s+\ell } \rho_\cusp \left(s+\ell , -h\right)}
	{\Gamma\left(s+\ell \right)\zeta^*(2-2s-2\ell )} 
	\ol{\langle V,E^*_\cusp(*,1-s-\ell ) \rangle}
	\rt) &
	\\ && \notag
	\lt. 
	\times
	\frac{(-1)^{\ell} \sqrt \pi 2^{\hf-s} \Gamma(1-s) \Gamma(2s+\ell-1)}
	{\ell! \Gamma(s+\ell) \Gamma(1-s-\ell)}
	\lt(1+\Or_\sigma \lt(\delta\rt)\rt)\rt]   &
\end{align}

where
 \beq \label{csigma}
 C_\sigma = \lt\{
 \begin{array}{ll}
 (0) & \mbox{ when } \thf-\sigma \notin \Z_{\geq 0} \\
 C & \mbox{ when } \thf -\sigma \in \Z_{\geq 0}
 \end{array}
\rt.
\eeq
and
 \beq \label{deltasigma}
 \pmb{\delta}_{\sigma,\ell} = \lt\{
 \begin{array}{ll}
 0 & \mbox{ when } \ell \neq  \thf-\sigma \\
 1 & \mbox{ when } \ell = \thf -\sigma .
 \end{array}
\rt.
 \eeq
The dependence on $\sigma$ in the error term arises from our estimates of the residues of $M(s,z/i,\delta)$ in \eqref{ires2} and the dependence on the location of the poles. From what we know about \eqref{ires2} in Proposition \ref{prop:Mbounds}, the error term above also has a meromorphic continuation to all $s$ in the strip we are working in, and we see it has, at most, the same poles as the residual term, though the residues of these poles will vanish as $\delta \to 0$.  


Proposition \ref{prop:innerup} combined with \eqref{upper1}  shows that the right-hand side of \eqref{YinftyDefcusp}  converges.  By the argument given above, $\cI_{\ell_1,\ell_2,\gd}^{\,\text{cts}}(s;{\sh})$ also has a meromorphic continuation to all $\Re s > \hf-A$. 
The same argument, together with  Proposition~\ref{prop:Ylimit}, shows that choosing $A$ sufficiently large will ensure that the expression for $\cI_{\ell_1,\ell_2,Y,\gd}(s;{\sh})$ in \eqref{otherhand} converge for any fixed $Y$, as long as $\Re \, s > \hf+\theta$, and that 
 the difference, $| \cI_{\ell_1,\ell_2,\delta}(s;{\sh}) -  \cI_{\ell_1,\ell_2,Y,\gd}(s;{\sh})|$, converges  as well, allowing the interchange of the limit as $Y \rightarrow \infty$ with the summation.  As the limit of each term in the summation is 0 as $Y \rightarrow \infty$ this shows
that for fixed $\delta > 0$ and $s$ with $\Re \,s> \hf+\theta$,  $ \cI_{\ell_1,\ell_2,\delta}(s;{\sh})$ is an analytic function of $s$ and 
$$ 
  \cI_{\ell_1,\ell_2,\delta}(s;{\sh}) =  \lim_{Y \rightarrow \infty} \cI_{\ell_1,\ell_2,Y,\gd}(s;{\sh}).
$$

Recall that in \eqref{Idef} we had an expression for $ \cI_{\ell_1,\ell_2,Y,\gd}(s;{\sh})$ as a Dirichlet series.   For $\Re \, s >1$ it is clear that the sum converges and that a limit exists as 
$Y \rightarrow \infty$, namely the analytic function

\beann
 \lim_{Y \rightarrow \infty} \cI_{\ell_1,\ell_2,Y,\gd}(s;{\sh}) 
&=&
\frac{1}{\vol} (4\pi)^{-(s+\wt-1)} \mkern -20 mu
\sum_{m_1\ell_1= m_2\ell_2 + {h}} 
{a(m_1) \bar b(m_2)
\over
 (m_2\ell_2)^{s+\wt-1}
}
\int_{0}^{\infty}
 e^{- y\left(1 +  \frac{\delta h}{ 2 \ell_2 m_2}  \right)  } 
y^{s+k-1} 
{ dy\over y}
\\
&=& 
\frac{\G(s+k-1)}{(4 \pi)^{s + k -1}\vol }D(s;h,\delta),
\eeann
where
\be\label{Dlim}
 D(s;\sh,\delta):= \sum_{m_1\ell_1= m_2\ell_2 + \sh} 
{a(m_1) \bar b(m_2)
\over
 (m_2\ell_2)^{ s+\wt-1}}\left(1 + \frac{\delta \sh}{2 \ell_2 m_2} \right)^{-(s+k-1)}
 \ee
converges for $\Re \, s >1$.

We have now established that the function $\cI_{\ell_1,\ell_2,\delta}(s;{\sh}) $ defined by  \eqref{Isum} is an infinite sum that converges when $\Re \, s > \hf$ and is related to 
$D(s;\sh,\delta)$ by
\be\label{righthand}
D(s;\sh,\delta):=  \frac{(4 \pi)^{s+k-1} \vol}{\G(s+k-1)} \cI_{\ell_1,\ell_2,\delta}(s;{\sh}),
 \ee
when $\Re \, s > 1$.   

The meromorphic continuation of $D(s;\sh,\delta)$ is related to that of 
$ \cI_{\ell_1,\ell_2,\delta}(s;{\sh})$ via \eqref{righthand}.  Using \eqref{Isum},\eqref{YinftyDefcusp},
\eqref{YinftyDefcont} we obtain a spectral expansion when  $\Re \, s=\sigma >\hf -A$.  
 For $\Re(s)>\frac{1}{2}$, we have
\beq \label{Dspec1}
D(s; h, \delta) = D_{\rm cusp}(s; h, \delta) +D_{\rm cts}(s; h, \delta),
\eeq
where 
	\beq\label{cuspdelta}
	D_{\rm cusp}(s; h, \delta)
	:= 
	\frac{(4\pi)^k 2^{s-\frac{1}{2}}}
	{2\sqrt{\pi}\Gamma(s+k-1) h^{s-\frac{1}{2}}}
	\sum_j \overline{\rho_j(-h)} M(s, t_j, \delta) \overline{\left<V, u_j\right>}
	\eeq
	and


	\be\label{Dcontextend}
	D_{\rm cts}(s; h, \delta): 
	= D_{\rm int}(s; h, \delta)+\Omega(s; h, \delta),
	\eeq
where, as with \eqref{icts}, we have that  $D_{\rm int}(s; h, \delta)$ is the integral component,
\begin{multline} \label{Dspec} 
	D_{\rm int}(s; h, \delta)
	:=   
	\frac{(4 \pi)^{k}2^{s-\hf}}
	{2\sqrt{\pi}\Gamma(s+k-1) h^{s-\hf}}
	\\ 
	\times
	 \sum_\cusp   
	 \frac{1}{2\pi i} 
	 \int_{-\ol{C_\sigma}} 
	 \frac{ \vol 
	 \pi^{\frac{1}{2}-z} \rho_\cusp\left(\frac{1}{2}-z, -h\right)
	 |h|^{-z}}
	 {\Gamma\left(\frac{1}{2}-z\right) \zeta^*(1+2z)}
	 M(s,z/i,\delta)
	 \ol{\langle V,E^*_\cusp(*,\thf+z) \rangle}  \ dz
 	\end{multline}
	 and $\Omega(s; h, \delta)$ arises from the residues of poles,
	 \begin{multline}\label{Omega}
	\Omega(s; h, \delta):=   
	\frac{(4 \pi)^{k}}
	{2\Gamma(s+k-1) }
	\\
	\times
	\sum_\cusp 
	\lt[
	\sum_{\ell=0}^{\lfloor\hf-\sigma\rfloor} 
	\lt(
	\frac{ \pi^{1-s-\ell} h^{1-2s-\ell} \vol \rho_\cusp \left(1-s-\ell, -h\right) } { \Gamma(1-s-\ell) \zeta^*(2s+2\ell) }   
	\overline{\langle V, E_\cusp^*\left(*, s+\ell \right)\rangle } 
	\rt.
	\rt.
	\\
	+ 
	\lt. 
	(1-\pmb{\delta}_{\sigma, \ell} )
	\frac{\pi^{s+\ell } h^{\ell } \vol \rho_\cusp \left(s+\ell , -h\right)}
	{\Gamma\left(s+\ell \right)\zeta^*(2-2s-2\ell )} 
	\ol{\langle V,E^*_\cusp(*,1-s-\ell ) \rangle}
	\rt)
	\\
	\lt. 
	\times
	\frac{(-1)^{\ell} \Gamma(1-s) \Gamma(2s+\ell-1)}
	{\ell! \Gamma(s+\ell) \Gamma(1-s-\ell)}
	\lt(1+\Or_\sigma\lt(\delta\rt)\rt)
	\rt],
	\,
	\end{multline}
	and is zero if $\sigma>\hf$. 
	Here $C_\sigma$ and $\pmb{\delta}_{\sigma,\ell}$ are as in \eqref{csigma} and \eqref{deltasigma}. 

By \eqref{upper1},  \eqref{partbound}, \eqref {conttriplebound}, \eqref{Isum},
for any sufficiently large fixed $A \gg 1$, the series expressions for  
$\cI_{\ell_1,\ell_2,\gd}^{\,\text{cusp}}(s;{\sh})$, $\cI_{\ell_1,\ell_2,\gd}^{\,\text{cts}}(s;{\sh})$, and $D(s;\sh,\delta)$ given in 
\eqref{YinftyDefcusp},\eqref{YinftyDefcont}, and  \eqref{Dspec1}  converge for  $s$ with $\Re \, s > \hf -A$, but the upper bound can have a factor of $\delta^{-A}$ in it.   As we intend to let $\delta \rightarrow 0$ it is useful to identify an interval for  $\Re \, s$ in which there is absolute convergence with an upper bound that is independent of $\delta$.

Take $\Im \, t_j =0$. Combining \eqref{deltasmall} and \eqref{deltabig} we have, for $\hf-A<\Re \, s = \sigma  \le \hf$, regardless of the relation between $\delta$ and $(1 + |t|)^{-2}$,

\be\label{Mdelta}
M(s,t_j,\delta) \ll_{A,\epsilon}  (1 + |t_j|)^{ 2\sigma-2 + 2 \epsilon}(1 + |s|)^{ 4A-3\sigma+3 }e^{-\frac{\pi}{2}|\Im \, s|},
\ee
when $s$ is at least $\epsilon$ away from the poles at $s=\hf\pm it_j -r$ for $r \geq 0$. If $\Im t_j \neq 0$ for some $t_j$ then by the same reasoning a comparable bound holds in the $s$ aspect and we are unconcerned with the growth in the $t_j$ aspect from a finite number of terms as they will not affect convergence of the sum over $t_j$. 
Combining this with Proposition~\ref{prop:innerup} and noting that $t_j \in \R$ for sufficiently large $j$, we have, for any 
$T, |s| \gg 1$, with  $\min_{m\in \Z}|s-\hf\pm t_j-m|>\epsilon$,
\begin{align}\label{cuspidalsum}
&\left|\sum_{|t_j|\sim T} \overline{\rho_j(-h)}  M(s,t_j,\delta)\overline{\<V,u_j\>}\right|  \notag \\
& \ \ \ \ll_{A,\epsilon} (1 + |s|)^{ 4A-3\sigma+3 }e^{-\frac{\pi}{2}|\Im \, s|} \left[
    (\ell_1 \ell_2)^{\hf-\kt}T^{k + 2\sigma -1 +2\epsilon}\log(T)^\frac12\right. \\ &\left. +(\ell_1 \ell_2)^{-\kt}T^{-9/8+k+2\sigma +2\epsilon}\left( \sigma_0(\ell_1\ell_2) (h,\ell_1\ell_2)^{\hf}h^{\hf }\log^2(3h)\log(T) \right)^{\hf} \right].\notag 
\end{align}
When $T$ is small, a similar bound holds in all aspects except for $T$. Indeed \eqref{Mdelta} is generally unchanged in the $s$-aspect, with some polynomial growth dependent on $A$ and the same exponential decay, Furthermore, Proposition~\ref{prop:innerup} is uniform and thus unchanged in the $\ell_1,\ell_2$ and $h$ aspects even when $T$ is small.

Applying \eqref{conttriplebound} in place of the proposition, we obtain   
\begin{multline}\label{contsum}
\int_{-T}^T
	\sum_\cusp \frac{\vol \rho_\cusp\left(\hf-it, -h\right)} {\zeta(1+2it)}  M(s,t,\delta) \overline{\<V,E_\cusp(*,\thf + it)\>}dt  \\ \ll
(\ell_1 \ell_2)^{-\kt+\epsilon}T^{2\sigma-3/2+k+2\epsilon}e^{-\frac{\pi}{2}|\Im \, s|}
(1+|s|)^{4A -3\sigma+3}
\end{multline}
when once more $s$ is at least $\epsilon$ away from any poles of $M(s,t,\delta)$. 

Breaking up the interval from 1 to $T$ dyadically and letting $T \rightarrow \infty$ it is clear from
 \eqref{cuspidalsum} that 
the cuspidal part of the spectral expansion of $D(s;\sh,\delta)$ converges when 
$\hf -A<\Re\, s < \hf -\kt -\epsilon$.  Referring to \eqref {contsum}, we see that the continuous part also converges in this region and satisfies a $\delta$-independent upper bound.   Thus  \eqref{cuspidalsum}, together with \eqref{contsum}, \eqref{Dspec} and Stirling's formula provide the upper bound for $D(s;\sh,\delta)$ of Proposition~\ref{prop:Imero} stated below in \eqref{semifinalupper2}.

  The details of the above discussion are summarized, and some additional information is added, in the following
\begin{proposition}
\label{prop:Imero}
The function  $D(s;\sh,\delta)$, defined in \eqref{Dlim}, has a meromorphic continuation to the half plane $\Re \, s >\hf -A$.
For $\Re(s)>\frac{1}{2}$, we have the decomposition
$$
D(s; h, \delta) = D_{\rm cusp}(s; h, \delta) +D_{\rm cts}(s; h, \delta),
$$
where $D_{\rm cusp}(s; h, \delta) $ and $D_{\rm cts}(s; h, \delta)$ are as in \eqref{cuspdelta} and \eqref{Dcontextend} respectively, are locally normally convergent as sums and integrals away from the poles
	of $M(s,t,\delta)$ and have meromorphic 
continuations  to  $\Re(s) >\frac{1}{2}-A$.

 If $t_j \neq 0$ for all $j$,   $D_{\rm cusp}(s; h, \delta)$ has simple poles at  $s = \hf \pm it_j - r$ for each $t_j$ and each $0 \le r < A \mp \Im \, t_j$. Furthermore $D_{\rm cts}(s; h, \delta)$ has poles when $s =\hf -r $ for $A>r \geq 1$, and at $s=\frac \vho 2-r$ for $A+\frac{\Re(\vho)-1}{2}>r\geq 0$ where $\vho$ is a non-trivial zero of $\zeta(s)$. Indeed, let
$$
R(s_0;h,\delta)= \Res_{s = s_0}D(s;\sh,\delta)
$$
then we have
\beann
&&R(\thf \pm it_j - r;\sh,\delta) \\
&=& \frac{(-1)^r(4\pi)^k h^{r \mp it_j}\overline{\rho_j(-h)}\overline{\<V,u_j\>}\Gamma(\hf \mp it_j +r)\Gamma(\pm 2it_j -r)}{2r!\Gamma(\hf + it_j)\Gamma(\hf-i t_j) \Gamma(k-\hf \pm it_j -r)} \\
&+& \mathcal{O}\left((1 + |t_j|)^{r}h^{r}\overline{\rho_j(-h)}\overline{\<V,u_j\>}\delta^{\hf}\right).
\eeann
If there does exist some $t_j=0$ then we also have double poles at $s=\hf-r$ for $r \geq 0$. 


In the region  $ \hf -A + \epsilon< \Re \, s =\sigma < \hf -\kt $, with $s$ at least a distance $\epsilon>0$ from poles $\hf \pm it_j-r$,
 and $|\Im\, s|\gg 1$,  $D(s;h,\delta)$ satisfies the upper bound 
\begin{multline} 
D(s;\sh,\delta)-\Omega(s; h, \delta) \ll (1 + |s|)^{A'}  (\ell_1 \ell_2)^{(1-k)/2} h^{\hf-\sigma}\\ \times
\max\left(1, \frac{(h, \ell_1\ell_2)^{1/4}\sigma_0(\ell_1\ell_2)^\hf h^{1/4}\log(3h)}{(\ell_1\ell_2)^\hf}\right). \label{semifinalupper2} 
\end{multline}
Here $A'$  depends only on $A$.   Also, when $(h,N)=1$, the term $\max\left(*\right)$ can be replaced by $h^\theta$, where $\theta$ represents the best progress toward the Ramanujan conjecture.

For $2 \ge \sigma \ge \hf-A$ the upper bound in the above expression is multiplied by $\delta^{-A}$.

For $ \sigma >1 + \epsilon,$ $D(s;\sh,\delta)$ satisfies the bound
\be
\label{1bound}
D(s;h,\delta) \ll \frac{\left(1 + \frac{h}{\ell_2}\right)^{(k-1)/2}}{\left(\ell_1 \ell_2\right)^{ (k-1)/2 }\ell_2^{1 + \epsilon}}(1+|s|)^{A'}.
\ee
\end{proposition}

\begin{proof}
Proposition~\ref{prop:Mbounds}, \eqref{upper1},  states that for sufficiently large $A$, and fixed 
$\delta>0$ and $s$ satisfying $\Re \, s >\hf -A$ at least a distance $\epsilon$ from the poles, the meromorphic continuation of  $M(s,t,\delta)$ decays faster than any power of $|t|$. 
Thus for any fixed $\delta > 0$ and such an $s$ with $\Re \, s >\hf -A$, the series expression for 
$ \cI_{\ell_1,\ell_2,\delta}(s;{\sh})$ given in \eqref{Isum} converges, giving a meromorphic continuation of  $\cI_{\ell_1,\ell_2,\delta}(s;{\sh})$ back to $\Re \, s > \hf-A$, with possible poles at the points specified.   The corresponding meromorphic continuation of $D(s;h,\delta)$ follows from \eqref{righthand}.  The residues at these points of $ \cI_{\ell_1,\ell_2,\delta}(s;{\sh})$, and consequently of $D(s;\sh,\delta)$,  follow from the corresponding residue evaluations of $M(s,t,\delta)$ given in Proposition~\ref{prop:Mbounds}, as well as by examining properties of \eqref{Dspec}.

We observe that the piece corresponding to the continuous part of the spectrum,
$\cI_{\ell_1,\ell_2,\gd}^{\,\text{cts}}(s;{\sh})$, has poles due to the terms
$$
\frac{\Gamma(2s+\ell-1)}{\zeta^*(2s+2\ell)\Gamma(s+\ell)}
$$
and  it is a straightforward exercise to compute these residues.
The bounds on the discrete continuous parts of $D(s;h,\delta)$ for $\hf -A <\sigma <\hf- \kt $ have been explained above.  The bound for $\sigma > 1+ \epsilon$ is immediate.
\end{proof}

\section{The limit as $\delta \rightarrow 0$} \label{sec:5}
The object of the section is to prove the following proposition.
\begin{proposition}
\label{prop:D0}
For $\Re s >1$, let 
\be\label{Dsumdeltato0}
D(s;h) = \lim_{\delta \rightarrow 0}D(s;h,\delta)=  \sum_{m_1\ell_1= m_2\ell_2 + \sh} 
{a(m_1) \bar b(m_2)
\over (m_2\ell_2)^{ s+\wt-1}}.
\ee
The function $D(s;h)$ converges for $\Re \, s >1$ and has a meromorphic continuation to all $s$ with $\Re s > \hf -A$.  If $t_j \neq 0$ for all $j$, this function has simple poles at  $s = \hf \pm it_j - r$ for each $t_j$ and each $0 \le r < A \mp \Im \, t_j$. Furthermore it has possible poles, due to $\Omega(s;h)$ as in \eqref{omega2}, when $s =\hf -r $ for $A>r \geq 1$, and at $s=\frac \vho 2-r$ for $A+\frac{\Re(\vho)-1}{2}>r\geq 0$ where $\vho$ is a non-trivial zero of $\zeta(s)$. In particular
\be\label{Rdef}
R(\thf + it_j - r;\sh) = \Res_{s = \hf + it_j - r}D(s;h)= c_{r,j} h^{r-it_j}\overline{\rho_j(-h)},
\ee
where now the index $j$ identifies our choice of $+it_j$ or $-it_j$ and
\be\label{crjdef}
c_{r,j} =
 \frac{(-1)^r (4\pi)^k\overline{\<V,u_j\>}\Gamma(\hf - it_j +r)\Gamma(2it_j -r)}{2r!\Gamma(\hf + it_j)\Gamma(\hf-i t_j) \Gamma(k-\hf + it_j -r)}.
\ee
For $T \gg1$, if  $\ell_1,\ell_2$ are prime,  and $\ell_1,\ell_2 \gg h^{1/4}\log(3h)$, with $(h,\ell_1\ell_2)=1$, then the $c_{r,j} $ satisfy the average upper bounds
\be\label{crj2}
 \sum_{|t_j|\sim T}|c_{r,j}|^2e^{\pi |t_j|} \ll\log( T) (\ell_1 \ell_2)^{ -k} T^{2r + 1}.
 \ee
If there exists $t_j=0$, then all of the above is true but $D(s;h)$ instead has double poles at $s=\hf-r$ for $r \geq 0$, $r \in \Z$. 

For $s$ in the vertical strip 
$\hf -A <\sigma < \hf  -\kt$,  $D(s;h)$ can be expressed by the convergent spectral expression given in \eqref{Dspec3} with upper bounds given by \eqref{reallyfinalupper}. For $s$ in the vertical strip 
$c <\sigma < 2$, at least $\epsilon$ away from the poles noted above, for any $c$ with $\hf - A + \epsilon <c <\hf -\kt$, we have that $D(s;h)$ is given by the contour integral in \eqref{Cauchy0} and satisfies the upper bound \eqref{critbound}.

\end{proposition}

\begin{proof}
Suppose $t_j\neq 0$ for all $j$. Using the information in Proposition~\ref{prop:Imero},  the Cauchy residue theorem allows us to express each $D(s;h,\delta)$, with 
$\hf - A + \epsilon < \Re \, s < 2 $,  as follows.  Set $T \gg 1$.   Then when $|\Im \, s| <\frac{T}{2}$, 
\bea\label{Cauchy1}
D_{\rm cusp}(s; h, \delta) &+& \sum_{\substack{|t_j| <T \\ 0 \le r <  \hf\mp \Im(t_j)-c}} \notag
\frac{R(\hf + it_j -r;h,\delta)e^{(\hf + it_j -r-s)^2}}{\hf + it_j -r-s}\\
&=& I_1(T)-I_2(T)-I_3(T)+I_4(T),\
\eea
where $D_{\rm cusp}(s; h, \delta)$ is as in \eqref{cuspdelta} and $$
I_1(T) = \frac{1}{2 \pi i}\int_{2-iT}^{2+iT}\frac{D_{\rm cusp}(u;h,\delta)e^{(u-s)^2}du}{u-s}, 
\ \ 
I_2(T) =  \frac{1}{2 \pi i}\int_{c+iT}^{2+iT}\frac{D_{\rm cusp}(u;h,\delta)e^{(u-s)^2}du}{u-s}, 
$$
$$
I_3(T) =  \frac{1}{2 \pi i}\int_{c-iT}^{c+iT}\frac{D_{\rm cusp}(u;h,\delta)e^{(u-s)^2}du}{u-s}, \ \ 
I_4(T)=\frac{1}{2 \pi i}\int_{c-iT}^{2-iT}\frac{D_{\rm cusp}(u;h,\delta)e^{(u-s)^2}du}{u-s}.
$$ 
We choose $T$ is such that the contour lines for $I_2$ and $I_4$ never get closer to the poles of $D_{\rm cusp}(u;h,\delta)$ than  $\frac{T^{-2}}{c}$ for some $c>0$, which is permitted by Weyl's Law. 
Thus by the upper bounds given in \eqref{nearp1} and Proposition~\ref{prop:Imero}, we have that, for $T$ large
\beq \label{polygrow}
I_2 \ll_{c,N,h,s}T^{A'}e^{-{T}^2}(\delta^{-A}+ cT^2)
\eeq
From this it is obvious that $I_2(T) \rightarrow 0$ as $T \rightarrow \infty$.
Similarly $\lim_{T \rightarrow \infty} I_4(T)= 0$. This gives us the integral expression for $D_{\rm cusp}(s; h, \delta)$,
\begin{align}
\label{Cauchy11}
D_{\rm cusp}(s; h, \delta) =& \frac{1}{2 \pi i}\int_{(2)}\frac{D_{\rm cusp}(u;h,\delta)e^{(u-s)^2}du}{u-s} -\frac{1}{2 \pi i}\int_{(c)}\frac{D_{\rm cusp}(u;h,\delta)e^{(u-s)^2}du}{u-s} \\
& - \sum_{t_j, 0 \le r <  \hf\mp \Im(t_j)-c} \notag
\frac{R(\hf + it_j -r;h,\delta)e^{(\hf + it_j -r-s)^2}}{\hf + it_j -r-s} .
\end{align}

Now we observe that for $\hf< \sigma<2$,
\begin{align}
&  \frac{1}{2 \pi i}\int_{(2)}\frac{D_{\rm cts}(u;h,\delta)e^{(u-s)^2}du}{u-s} \\
& =\sum_\cusp \lt(\frac{1}{2\pi i} \rt)^2  \iint\limits_{(2)(0)} \frac{\mc{V}\pi^{k-z}\rho_\cusp(\hf-z,-h)h^{\hf-u-z}M(u,z/i,\delta)
	 \ol{\langle V,E^*_\cusp(*,\thf+z) \rangle}  }{2^{\frac{3}{2}-2k-u}\Gamma(u+k-1)\Gamma(\hf-z)\zeta^*(1+2z)(u-s)}e^{(u-s)^2}  \ dzdu \notag
\end{align}
where $D_{\rm cts}(u;h,\delta)$ is as in \eqref{Dcontextend}. Since the double integral is convergent in $u$ and $z$, we can change the order of integration and shift the line of integration of $u$ left from $\Re \, u =2$ to $\Re \, u =c$. This gives us that
\begin{align}
&  \frac{1}{2 \pi i}\int_{(2)}\frac{D_{\rm cts}(u;h,\delta)e^{(u-s)^2}du}{u-s} =D_{\rm cts}(s;h,\delta)\\
& \ +\sum_\cusp \lt(\frac{1}{2\pi i} \rt)^2  \iint\limits_{(c)(0)} \frac{\mc{V}\pi^{k-z}\rho_\cusp(\hf-z,-h)h^{\hf-u-z}M(u,z/i,\delta) \ol{\langle V,E^*_\cusp(*,\thf+z) \rangle}  }{2^{\frac{3}{2}-2k-u}\Gamma(u+k-1)\Gamma(\hf-z)\zeta^*(1+2z)(u-s)}e^{(u-s)^2}  \ dzdu \notag \\
& \ +\sum_{r=0}^{\lfloor \hf-c \rfloor}\sum_\cusp \frac{1}{2\pi i}  \int\limits_{(0)} \frac{\mc{V}\pi^{\hf-z}\rho_\cusp(\hf-z,-h)h^{r} \ol{\langle V,E^*_\cusp(*,\thf+z) \rangle}  }{2(-1)^r r! (4\pi)^{-k}\Gamma(\hf+z)\Gamma(\hf-z)^2\zeta^*(1+2z)}\notag \\
&  \ \ \ \ \times  \lt[ \frac{h^{-2z} \Gamma(\hf-z+r)\Gamma(2z-r)e^{(\hf+z-r-s)^2}}{\Gamma(k-\hf+z-r)(\hf+z-r-s)} + \frac{\Gamma(\hf+z+r)\Gamma(-2z-r)e^{(\hf-z-r-s)^2}}{\Gamma(k-\hf-z-r)(\hf-z-r-s)} \rt] \ dz \notag\\
& \ + \mc{O}_{s,c}(\delta). \notag
\end{align}

We note that we can disregard the issue of double poles at $z=0$. Indeed, when we move the line of integration for $u$ near $\Re\, u = \hf-r$, we can shift the line of integration for $z$ slightly to the left of $\Re \, z =0$ into the zero-free region of $\zeta^*(1+2z)$ and then move the line for $u$ past simple poles at $u = \hf\pm z -r$. Once we have moved past these poles we can shift the line of integration for $z$ back to $\Re \, z=0$ without consequence. Furthermore, we observe the integrands of residual terms in the above expression are well-defined when $z=0$ and that these residual terms are also well-defined. 

Rewriting the above equality we get
\begin{align}
&D_{\rm cts}(s;h,\delta) =  \frac{1}{2 \pi i}\int_{(2)}\frac{D_{\rm cts}(u;h,\delta)e^{(u-s)^2}du}{u-s}\\
& \ - \frac{1}{2 \pi i}\int_{(c)}\frac{(D_{\rm cts}(u;h,\delta)-\Omega(u;h,\delta))e^{(u-s)^2}du}{u-s} \notag \\
& \ -\sum_{r=0}^{\lfloor \hf-c \rfloor}\sum_\cusp \frac{1}{2\pi i}  \int\limits_{(0)} \frac{\mc{V}\pi^{\hf-z}\rho_\cusp(\hf-z,-h) \ol{\langle V,E^*_\cusp(*,\thf+z) \rangle}  }{2(-1)^r r! (4\pi)^{-k}\Gamma(\hf+z)\Gamma(\hf-z)^2\zeta^*(1+2z)}\notag \\
& \   \times  \lt[ \frac{h^{r-2z} \Gamma(\hf-z+r)\Gamma(2z-r)e^{(\hf+z-r-s)^2}}{\Gamma(k-\hf+z-r)(\hf+z-r-s)} + \frac{h^r\Gamma(\hf+z+r)\Gamma(-2z-r)e^{(\hf-z-r-s)^2}}{\Gamma(k-\hf-z-r)(\hf-z-r-s)} \rt] \ dz \notag\\
& \ - \mc{O}_{s,c}(\delta), \notag
\end{align}
where $\Omega(s;h,\delta)$ is as in \eqref{Omega}, when $\hf<\sigma<2$. We see that the integrands of the first two integrals have no poles in $s$ when $c<\sigma<2$ so we must only consider the sum of integrals to meromorphically continue the above formulation of $D_{\rm cts}(s;h,\delta)$. Indeed, we see that we only need be concerned with the case when $s-\hf +r$ is near negative integral values. By an argument analogous to the one that gave us the continuation of $D_{\rm cts}(s;h,\delta)$ in Proposition \ref{prop:Imero}, we are able to shift the lines of integration for $z$ left and then back again as the line of integration for $s$ moves past to get
 \begin{align} \label{thing11}
&D_{\rm cts}(s;h,\delta) =  \frac{1}{2 \pi i}\int_{(2)}\frac{D_{\rm cts}(u;h,\delta)e^{(u-s)^2}du}{u-s}\\
& \ - \frac{1}{2 \pi i}\int_{(c)}\frac{(D_{\rm cts}(u;h,\delta)-\Omega(u;h,\delta))e^{(u-s)^2}du}{u-s} \notag \\
& \ -\sum_{r=0}^{\lfloor \hf-c \rfloor}\sum_\cusp \frac{1}{2\pi i}  \int\limits_{C_\sigma} \frac{\mc{V}\pi^{\hf-z}\rho_\cusp(\hf-z,-h) \ol{\langle V,E^*_\cusp(*,\thf+z) \rangle}  }{2(-1)^r r! (4\pi)^{-k}\Gamma(\hf+z)\Gamma(\hf-z)^2\zeta^*(1+2z)}\notag \\
& \   \times  \lt[ \frac{h^{r-2z} \Gamma(\hf-z+r)\Gamma(2z-r)e^{(\hf+z-r-s)^2}}{\Gamma(k-\hf+z-r)(\hf+z-r-s)} + \frac{h^r\Gamma(\hf+z+r)\Gamma(-2z-r)e^{(\hf-z-r-s)^2}}{\Gamma(k-\hf-z-r)(\hf-z-r-s)} \rt] \ dz \notag\\
& \ +\Omega(s;h,\delta) - \mc{O}_{s,c}(\delta) \notag
\end{align}
where $C_\sigma$ is as in \eqref{csigma}. Combining \eqref{Cauchy11} and \eqref{thing11} we get that
\begin{subequations} \label{thing13}
 \begin{align}  \notag
&D(s;h,\delta)  \\
& =  \frac{1}{2 \pi i}\int_{(2)}\frac{D(u;h,\delta)e^{(u-s)^2}du}{u-s} \label{thing111}\\
& \ - \frac{1}{2 \pi i}\int_{(c)}\frac{(D(u;h,\delta)-\Omega(u;h,\delta))e^{(u-s)^2}du}{u-s} \label{thing12} \\
& \ -\sum_{r=0}^{\lfloor \hf-c \rfloor}\sum_\cusp \frac{1}{2\pi i}  \int\limits_{C_\sigma} \frac{\mc{V}\pi^{\hf-z}\rho_\cusp(\hf-z,-h) \ol{\langle V,E^*_\cusp(*,\thf+z) \rangle}  }{2(-1)^r r! (4\pi)^{-k}\Gamma(\hf+z)\Gamma(\hf-z)^2\zeta^*(1+2z)}\notag \\
& \   \times  \lt[ \frac{h^{r-2z} \Gamma(\hf-z+r)\Gamma(2z-r)e^{(\hf+z-r-s)^2}}{\Gamma(k-\hf+z-r)(\hf+z-r-s)} + \frac{h^r\Gamma(\hf+z+r)\Gamma(-2z-r)e^{(\hf-z-r-s)^2}}{\Gamma(k-\hf-z-r)(\hf-z-r-s)} \rt] \ dz \notag\\
& \ +\Omega(s;h,\delta) - \sum_{t_j, 0 \le r <  \hf\mp \Im(t_j)-c} \notag
\frac{R(\hf + it_j -r;h,\delta)e^{(\hf + it_j -r-s)^2}}{\hf + it_j -r-s} 
\end{align}
\end{subequations}

By \eqref{Dlim}, we see that when $\Re ( u) >1$, the limit as $\delta \rightarrow 0$ of the integrand of \eqref{thing111} exists.  By the absolute convergence of $D(\sigma + it;h)$ for $\sigma >1$ and the exponential decay of $e^{u^2}$, the limit as $\delta \rightarrow 0$ of \eqref{thing111} exists and the limit can be interchanged with the integral.

Turning to \eqref{thing12} and referring to \eqref{deltasmall} we see that for any fixed $t$, and $\Re \, s \le \hf+|\Im\, t|$, $\lim_{\delta \rightarrow 0} M(s,t,\delta)$ exists and 
\begin{align}\label{M0def}
M(s,t) &= \lim_{\delta \rightarrow 0} M(s,t,\delta)=\frac{\sqrt{\pi} 2^{\hf-s}\G(s-\hf -it) \G(s-\hf + it) \G(1-s)}{\G(\hf - it)\G(\hf + it)}. 
\end{align}
Referring to Proposition~\ref{prop:Imero}, line \eqref{semifinalupper2}, it is clear that in strip  $\hf -A + \epsilon<\sigma < \hf -\kt$  the limit as  $\delta \rightarrow 0$ of $D(s;h,\delta)$ exists,  equals 
$D(s;h)$ as defined by

\begin{align}\label{Dspec3}
 D(s;\sh)
	=&  \frac{(4 \pi)^{k}}{2\Gamma(s+k-1)h^{s-\hf}} \left(\sum_j
	\overline{\rho_j(-h)}   \frac{\G(s-\hf -it_j) \G(s-\hf + it_j) \G(1-s)}{\G(\hf - it_j)\G(\hf + it_j)}	\overline{\<V,u_j\>} \right. \notag \\
&   +
   \sum_\cusp  
   \lt[  \frac{1}{2\pi i} 
   \int_{-\ol{C_\sigma}} 
   \frac{\vol 2\pi^{\hf-z} |h|^{-z} \rho_\cusp\left(\hf-z, -h\right) }   
   {\G(\hf-z) \zeta^*(1+2z) } 
   \rt. \notag
   \\
 &  \lt.\lt.
   \times
   \frac{ \G(s-\hf -z) \G(s-\hf + z) \G(1-s)}
   {\G(\hf - z)\G(\hf + z)}
   \ol{\langle V,E^*_\cusp(*,\thf+z) \rangle}  \ dz 
   \rt]\rt) 
   	+ 
	\Omega(s;\sh),
\end{align}
where
\begin{align}
& \Omega(s; \sh) \label{omega2}
=\frac{(4 \pi)^{k}}{2\Gamma(s+k-1)}  \sum_\cusp \sum_{\ell=0}^{\lfloor\hf-\sigma\rfloor}  \frac{(-1)^{\ell}\Gamma(1-s) \Gamma(2s+\ell-1)}
	{\ell! \Gamma(s+\ell) \Gamma(1-s-\ell)}
	\\ \notag
	& \mkern150mu \times 
	\lt[
		\frac{ \pi^{1-s-\ell} h^{1-2s-\ell} \vol \rho_\cusp \left(1-s-\ell, -h\right) } { \Gamma(1-s-\ell) \zeta^*(2s+2\ell) }   
	\overline{\langle V, E_\cusp^*\left(*, s+\ell \right)\rangle } 
	\rt.
	\\
	& \lt. \mkern200mu + \notag
	(1-\pmb{\delta}_{\sigma, \ell} )
	\frac{\pi^{s+\ell } h^{\ell} \vol \rho_\cusp \left(s+\ell , -h\right)}
	{\Gamma\left(s+\ell \right)\zeta^*(2-2s-2\ell )} 
	\ol{\langle V,E^*_\cusp(*,1-s-\ell ) \rangle} \rt],	
	\end{align}
	and where  $\pmb{\delta}_{\sigma,\ell}$ is given in  \eqref{deltasigma}.

Furthermore, in this range $D(s;\sh)$ satisfies the upper bound 
\begin{multline}\label{reallyfinalupper}
D(s;\sh)-\Omega(s;\sh)\ll_{A,\epsilon} (1 + |s|)^{A'}  (\ell_1 \ell_2)^{(1-k)/2} h^{\hf-\sigma}
\\ \times
\max\left(1, \frac{(h, \ell_1\ell_2)^{1/4}\sigma_0(\ell_1\ell_2)^\hf h^{1/4}\log(3h)}{(\ell_1\ell_2)^\hf}\right),
\end{multline}
where $|\Im \, s|$ is sufficiently large, $A'$ is determined by $A$ and $s$ is at least $\epsilon$ away from the poles at $\hf \pm it_j-r$. Also, when $(h,N)=1$, the term $\max\left(*\right)$ can be replaced by $h^\theta$, where $\theta$ represents the best progress toward the Ramanujan conjecture.
   The exponential decay of $D(s;h)$ in the imaginary part of $s$ shown in \eqref{reallyfinalupper}, together with the square-exponential decay of $e^{u^2}$ shows that for 
$\hf - A + \epsilon < \Re \, s < \hf - \kt $,   the limit as $\delta\rightarrow 0$ of $\eqref{thing12}$ exists and equals the integral of the limit. 

By the description in Proposition~\ref{prop:Imero}, each residue term 
$R(\hf + it_j -r;h,\delta)$ grows at most polynomially in $|t_j|$ for any fixed $s$ with 
$\hf - A + \epsilon < \Re \, s < 2 $.  
As $e^{(\hf + it_j -r-s)^2}$ decays square-exponentially in $|t_j|$, the sum over the  residues converges and approaches the limits given above in \eqref{Rdef} and \eqref{Cauchy0}.   
We have thus established that for any $s$ in the range $\hf - A + \epsilon <\Re \, s <2$, the limit as $\delta \rightarrow 0$ of $D(s;h,\delta)$ exists. Indeed, for $s$ in the vertical strip 
$c <\sigma < 2$, at least $\epsilon$ away from the poles noted above for any $c$ with $\hf - A + \epsilon <c <\hf -\kt$ then the limit, $D(s;h)$, is given by
\begin{align}\label{Cauchy0}
&D(s;h) =\\
&\frac{1}{2 \pi i}\int_{2-i\infty}^{2+i\infty}\frac{D(u;h)e^{(u-s)^2}du}{u-s} - \frac{1}{2 \pi i}\int_{c-i\infty}^{c+i\infty}\frac{\lt( D(u;h) - \Omega(u;h)\rt)e^{(u-s)^2}du}{u-s}\nonumber\\
 &-\sum_{r=0}^{\lfloor \hf-c \rfloor}\sum_\cusp \frac{1}{2\pi i}  \int\limits_{C_\sigma} \frac{\mc{V}\pi^{\hf-z}\rho_\cusp(\hf-z,-h) \ol{\langle V,E^*_\cusp(*,\thf+z) \rangle}  }{2(-1)^r r! (4\pi)^{-k}\Gamma(\hf+z)\Gamma(\hf-z)^2\zeta^*(1+2z)}\notag \\
&    \times  \lt[ \frac{h^{r-2z} \Gamma(\hf-z+r)\Gamma(2z-r)e^{(\hf+z-r-s)^2}}{\Gamma(k-\hf+z-r)(\hf+z-r-s)} + \frac{h^r\Gamma(\hf+z+r)\Gamma(-2z-r)e^{(\hf-z-r-s)^2}}{\Gamma(k-\hf-z-r)(\hf-z-r-s)} \rt] \ dz \notag\\
&+\Omega(s;h)-\sum_{\pm t_j, 0 \le r <  \hf \mp \Im(t_j) -c}\frac{R(\hf + it_j -r;h)e^{(\hf + it_j -r-s)^2}}{\hf + it_j -r-s} 
\end{align}
 Here in the integral with $\Re \, u = 2$, $D(u;h)$ is given by \eqref{Dsumdeltato0}, and in the integral with $\Re \, u = c$, $D(u;h)$ is given by \eqref{Dspec3}.
In this strip, $D(s;\sh,\delta)$ satisfies the same upper bound as given in \eqref{reallyfinalupper}.

The average upper bounds for the $c_{r,j}$ are obtained by an application of Stirling's formula, and Propositions~\ref{prop:Rezresult} and \ref{prop:innerup}.  It follows then that
\begin{align} \label{resisum}
\sum_{t_j, 0 \le r <  \hf -c}\frac{R(\hf + it_j -r;h)e^{(\hf + it_j -r-s)^2}}{\hf + it_j -r-s} \ll_c ( \log (\ell_1 \ell_2))^{\hf} (\ell_1 \ell_2)^{\hf - \kt} h^{\hf - \sigma +\theta}. 
\end{align}
By \eqref{reallyfinalupper}, the integral along
$\Re \, s = c$  is 
\begin{multline}\label{critbound}
 \ll (1 + |s|)^{A'} (\ell_1 \ell_2)^{\hf - \kt} h^{\hf-c}
\max\left(h^\epsilon, \frac{(h, \ell_1\ell_2)^{1/4}\sigma_0(\ell_1\ell_2)^\hf h^{1/4}\log(3h)}{(\ell_1\ell_2)^\hf}\right),
\end{multline} 
and by \eqref{1bound}, the integral along
$\Re \, s = 2$  is 
\begin{align*}
\ll \frac{\left( 1 + \frac{h}{\ell_2} \right)^{(k-1)/2}}{\ell_2 (\ell_1\ell_2)^{(k-1)/2}}(1+|s|)^{A'} & \ll \ell_2^{-1}h^{(k-1)/2} 
(\ell_1\ell_2)^{-(k-1)/2} (1+|s|)^{A'}.
\end{align*}
Noting that $\frac{1}{2}-c > \frac{k-1}{2}$,
 it follows from the previous three lines that for any $s$ with $c < \Re \, s < 2$,
$D(s;\sh)-\Omega(s;\sh)$ satisfies the upper bound
 \eqref{critbound}. 
 
 When $t_j=0$ for some $j$, the above argument is nearly identical except for additional, slightly different residual terms from \eqref{Cauchy1} arising from the double poles of $D(s;h,\delta)$, indicating that $D(s;h)$ has double poles at $s=\hf-r$.

 \end{proof}

\section{An application to single shifted sums }
\label{sect:app}
We begin by defining the smoothed shifted sum,
$$
S(x;h) := \sum_{\ell_1m_1 = \ell_2m_2+h}A(m_1)B(m_2)G(m_2/x).
$$
Here
\be\label{gGdef}
G(x) =  \frac{1}{2 \pi i}\int_{2 - i\infty}^{2 +i\infty} g(s)x^s ds
\ee
is smooth, with compact support in the interval $[1,2]$ and $g(s)$ is holomorphic, with arbitrary polynomial decay in vertical strips.
The objective of this section is to prove the following theorem. 
\begin{theorem}\label{theorem:singlesum}
Fix $N_0,\ell_1,\ell_2,h\in\N$,  with $N_0,\ell_1,\ell_2$ square free and $(N_0,\ell_1\ell_2)=1$. Suppose $h \ll \ell_2x$.   Then if $\theta>0$,
$$
S(x;h)\ll (\ell_2x)^\hf (\ell_2x/h)^\theta \max\left(1, \frac{(h, \ell_1\ell_2)^{1/4}\sigma_0(\ell_1\ell_2)^\hf h^{1/4}\log(3h)}{(\ell_1\ell_2)^\hf}\right).
$$
Also  if $(N_0\ell_1\ell_2,h)=1$, then
$$
S(x;h)\ll (\ell_2x)^\hf (\ell_2x/h)^\theta h^\theta =  (\ell_2x)^{\hf+\theta}.
$$
The above bounds still hold if $\theta=0$, unless $t_j=0$ for some $j$, in which case we replace $(\ell_2 x/h)^\theta$ with $\log(\ell_2 x /h)$.

Suppose $h \gg \ell_2x$,  then
\be\label{b1}
S(x;h)\ll h^\hf (h/\ell_2x)^\epsilon \max\left(1, \frac{(h, \ell_1\ell_2)^{1/4}\sigma_0(\ell_1\ell_2)^\hf h^{1/4}\log(3h)}{(\ell_1\ell_2)^\hf}\right).
\ee
Also,  if $(N_0\ell_1\ell_2,h)=1$ then
\be\label{b2}
S(x;h)\ll h^{\hf + \theta}(h/\ell_2x)^\epsilon.
\ee

\end{theorem}
The remainder of this section is devoted to the proof of this result.   To the best of the  knowledge of the authors, this theorem roughly equals the current state of the art in estimates for single shifted sums, although the proof is accomplished by new methods.  We include this mainly as a reference point from which we will build a new theory of double shifted sums.
\subsection{The expression of the shifted sum as an inverse Mellin transform}
The smoothed sum $S(x;h)$ can be expressed as an inverse Mellin transform of $D(s;h)$ as follows.
\begin{proposition}\label{prop:DtoS} For $0 < \gamma < (k-1)/2$,
$$
S(x;h)=\frac{(\ell_1\ell_2)^{(k-1)/2}}{(2 \pi i)^2}\int_{(\gamma)}\int_{(2)}
\frac{\G(u)\G((k-1)/2 -u)D(s;h)(\ell_2x)^{s+u}g(s+u)dsdu}{\G((k-1)/2)h^u}.
$$
\end{proposition}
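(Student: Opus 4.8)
The plan is to unfold the right-hand side. On the line $\Re\,s=2$ the Dirichlet series \eqref{Dlim} for $D(s;h,\delta)$ converges absolutely, so I would substitute it, interchange the summation with the double contour integral, and then evaluate the two inner integrals in closed form: the integral in $s$ reproduces the cutoff $G$, and the integral in $u$ a beta-function factor, after which the passage $\delta\to 0$ is elementary.

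For the interchange: on $\Re\,s=2>1$ the series converges absolutely, while on the fixed vertical contours $g(s+u)$ has faster-than-polynomial decay, the ratio $\G(u)\G(\tfrac{k-1}{2}-u)/\G(\tfrac{k-1}{2})$ decays exponentially, and $(\ell_2x)^{s+u}$ is bounded; hence the combined sum against $ds\,du$ is absolutely convergent and Fubini applies. Fix the $u$-contour at $\Re\,u=\gamma$ with $0<\gamma<\tfrac{k-1}{2}$. Writing $a(m_1)=A(m_1)m_1^{(k-1)/2}$, $b(m_2)=B(m_2)m_2^{(k-1)/2}$ and abbreviating $M:=\ell_2m_2+\tfrac{\delta h}{2}$, so that $(1+\tfrac{\delta h}{2\ell_2m_2})^{-(s+k-1)}(m_2\ell_2)^{-(s+k-1)}=M^{-(s+k-1)}$, a single diagonal term $\ell_1m_1=\ell_2m_2+h$ contributes $(\ell_1m_1)^{(k-1)/2}(\ell_2m_2)^{(k-1)/2}A(m_1)\bar B(m_2)$ times
$$
\frac{1}{(2\pi i)^2}\int_{(\gamma)}\int_{(2)}\frac{\G(u)\G(\tfrac{k-1}{2}-u)}{\G(\tfrac{k-1}{2})\,h^{u}}\,(\ell_2x)^{s+u}\,M^{-(s+k-1)}\,g(s+u)\,ds\,du.
$$

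Next I would evaluate the two integrals. In the inner one put $v=s+u$; since $g$ is entire with rapid decay the line can be returned to $\Re\,v=2$, and by \eqref{gGdef} the inner integral equals $M^{u-k+1}G(M/(\ell_2x))$. The remaining $u$-integral is $\tfrac{1}{2\pi i}\int_{(\gamma)}\tfrac{\G(u)\G(\frac{k-1}{2}-u)}{\G(\frac{k-1}{2})}(M/h)^{u}\,du$, which by the Mellin--Barnes/beta integral $\tfrac{1}{2\pi i}\int_{(\gamma)}\G(u)\G(\alpha-u)t^{-u}\,du=\G(\alpha)(1+t)^{-\alpha}$ — valid precisely because $0<\gamma<\alpha=\tfrac{k-1}{2}$, so no pole of $\G(u)$ or of $\G(\alpha-u)$ is crossed — equals $(1+h/M)^{-(k-1)/2}=M^{(k-1)/2}(M+h)^{-(k-1)/2}$. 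Collecting the powers of $M$ and using $M+h=\ell_1m_1+\tfrac{\delta h}{2}$, the single term becomes
$$
(\ell_1m_1)^{(k-1)/2}(\ell_2m_2)^{(k-1)/2}\,M^{(1-k)/2}(M+h)^{-(k-1)/2}\,A(m_1)\bar B(m_2)\,G\!\left(\tfrac{M}{\ell_2x}\right).
$$

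Finally, letting $\delta\to0$: then $M\to\ell_2m_2$, $M+h\to\ell_1m_1$, $M/(\ell_2x)\to m_2/x$, all the powers of $\ell_1m_1,\ell_2m_2$ cancel, and the term tends to $A(m_1)\bar B(m_2)G(m_2/x)$. Since $G$ is supported in $[1,2]$, the condition $M/(\ell_2x)\in[1,2]$ confines $m_2$ to a single fixed finite range for all small $\delta$, so the sum over $\ell_1m_1=\ell_2m_2+h$ is finite and the limit passes inside it; this recovers $S(x;h)$ (reading $B(m_2)$ in its definition as $\bar B(m_2)$, consistent with the $\bar b$-normalization of $D$ and with Theorem~\ref{thm:sum}). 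The only step requiring genuine care is the first — justifying the interchange of the a priori infinite Dirichlet series with the double integral — together with keeping the $u$-contour inside $(0,\tfrac{k-1}{2})$ so that the Barnes integral converges and sweeps up no extra residues; everything else is bookkeeping, the role of the inserted $\G$-factors being exactly to manufacture the ratio $(\ell_2m_2/\ell_1m_1)^{(k-1)/2}$ that converts the $\bar b$-normalization of $D$ into the $\bar B$-normalization of $S$.
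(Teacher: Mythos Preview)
Your proof is correct and uses the same ingredients as the paper's own argument: the Mellin inversion relating $g$ and $G$, and the Barnes beta integral \eqref{GRid}. The only difference is direction --- the paper starts from $S(x;h)$, inserts the Mellin transform of $G$, rewrites $(\ell_1m_1)^{-(k-1)/2}=(\ell_2m_2)^{-(k-1)/2}(1+h/\ell_2m_2)^{-(k-1)/2}$, expands that factor via \eqref{GRid}, and then recognizes $D(s;h)=\lim_{\delta\to0}D(s;h,\delta)$ at the end; you instead unfold the right-hand side directly and let $\delta\to0$ last, using the compact support of $G$ to reduce to a finite sum. Both routes are equivalent, and your handling of the $\delta\to0$ step is arguably cleaner than the paper's one-line justification.
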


\begin{proof} We observe that,
\begin{multline*}
S(x;h) =\sum_{\ell_1m_1 = \ell_2m_2+h} \frac{1}{2 \pi i}\int_{(2)} \frac{a(m_1)\bar b(m_2)g(s)x^sds}{m_1^{(k-1)/2}m_2^{(k-1)/2 +s}}\\
=\sum_{\ell_1m_1 = \ell_2m_2+h} \frac{(\ell_1\ell_2)^{(k-1)/2}}{2 \pi i}\int_{(2)} \frac{a(m_1)\bar b(m_2)g(s)}{(\ell_1m_1)^{(k-1)/2}(\ell_2m_2)^{(k-1)/2 }}\left(\frac{\ell_2x}{\ell_2m_2}\right)^sds\\
= \sum_{\ell_1m_1 = \ell_2m_2+h} \frac{(\ell_1\ell_2)^{(k-1)/2}}{2 \pi i}\int_{(2)} \frac{a(m_1)\bar b(m_2)(\ell_2 x)^sg(s)ds}{(\ell_2m_2 +h)^{(k-1)/2}(\ell_2m_2)^{(k-1)/2 +s}}\\
= \sum_{\ell_1m_1 = \ell_2m_2+h} \frac{(\ell_1\ell_2)^{(k-1)/2}}{2 \pi i}\int_{(2)} \frac{a(m_1)\bar b(m_2)}{(\ell_2m_2)^{k-1 +s}}
 \left(1 + \frac{h}{\ell_2m_2} \right)^{-(k-1)/2}(\ell_2 x)^sg(s)ds.
\end{multline*}
A form of the following identity can be found in \cite{GR}, 6.422(3).  For $\Re \, \beta > \gamma>0$  and $\text{arg}(t) < \pi$,
\be\label{GRid}
\frac{1}{2 \pi i} \int_{(\gamma)}\G(u)\G(\beta -u) t^{-u}du = \G(\beta)(1+t)^{-\beta}.
\ee
Move the line of integration to $\Re \, s = 2+\gamma$.  Then setting $\beta= (k-1)/2$ and $t = h/(\ell_2m_2)$  and substituting into the above, we have
\begin{align*}
S(x;h) &=
 \sum_{\ell_1m_1 = \ell_2m_2+h} \frac{(\ell_1\ell_2)^{(k-1)/2}}{(2 \pi i)^2}\int_{(2+\gamma)}  \int_{(\gamma)}
  \frac{a(m_1)\bar b(m_2)\G(u)\G((k-1)/2-u)}{(\ell_2m_2)^{k-1 +s-u}\G((k-1)/2)h^u}
(\ell_2 x)^sg(s)duds\\
&=\frac{(\ell_1\ell_2)^{(k-1)/2}}{(2 \pi i)^2}\int_{(2+\gamma)} \int_{(\gamma)}
  \frac{\G(u)\G((k-1)/2-u)D(s-u;h)(\ell_2 x)^sg(s)duds}{\G((k-1)/2)h^u}\\
&  =\frac{(\ell_1\ell_2)^{(k-1)/2}}{(2 \pi i)^2}\int_{(2)}  \int_{(\gamma)}
  \frac{\G(u)\G((k-1)/2-u)D(s;h)(\ell_2 x)^{s+u}g(s+u)duds}{\G((k-1)/2)h^u}\\
\end{align*}
The proposition follows after interchanging the order of the integrals.
\end{proof}
\subsection{Moving the line of integration} 
Taking the expression from Proposition \ref{prop:DtoS}, we move the $s$ line of integration to the contour, $C$, just to the left of the line $\Re s =\frac{1}{2}-\frac{k}{2}$ but still to the right of the zeros of $\zeta(2s+k)$. We bend $C$ so that it passes over every pole of the form $\hf+it_j-r$, including those potential exceptional poles to the left of $\Re s =\frac{1}{2}-\frac{k}{2}$ that would lie on the real line.

Thus by Proposition \ref{Dsumdeltato0}, if we assume that $t_j \neq 0$ for all $j$ then we obtain
\begin{align}\label{temp1}
&S(x;h)\\ 
&\notag=\frac{(\ell_1\ell_2)^{(k-1)/2}}{(2 \pi i)^2}\int_{(\gamma)}\int_{C}
\frac{\G(u)\G((k-1)/2 -u)(D(s;h)-\Omega(s;h))(\ell_2x)^{s+u}g(s+u)dsdu}{\G((k-1)/2)h^u} \\
&\notag + \sum_{j,0\le r <\hf-c}\frac{(\ell_1\ell_2)^{(k-1)/2}}{2 \pi i}\int_{(\gamma)}
\frac{\G(u)\G((k-1)/2 -u)c_{r,j} \overline{\rho_j(-h)}(\ell_2x)^{\hf}g(\hf+u-r+it_j)du}{\G((k-1)/2)(h/\ell_2 x)^{u-r+it_j}} \\ 
& \notag + \frac{(\ell_1\ell_2)^{(k-1)/2}}{(2 \pi i)^2}\int_{(\gamma)}\int_{(\hf)}
\frac{\G(u)\G((k-1)/2 -u)\Omega(s;h)(\ell_2x)^{s+u}g(s+u)dsdu}{\G((k-1)/2)h^u},
\end{align}
where the last integral above is more of a shorthand for the residual terms due to the polar contributors from the continuous part of $D(s;h)$, since $\Omega(s;h)$ is defined piecewise along vertical strips,.  Indeed, if we expand the last integral we get that
\begin{multline}
\frac{(\ell_1\ell_2)^{(k-1)/2}}{(2 \pi i)^2}\int_{(\gamma)}\int_{(\hf)}
\frac{\G(u)\G((k-1)/2 -u)\Omega(s;h)(\ell_2x)^{s+u}g(s+u)dsdu}{\G((k-1)/2)h^u}  \\
=\sum_{\mathfrak{a}} \sum_{r=0}^{\kt} \frac{(4\pi)^k(-1)^r(\ell_1\ell_2)^{(k-1)/2}}{2r!(2 \pi i)^2}\int_{(\gamma)}\int_{(\hf-r)} \frac{\G(u)\G((k-1)/2 -u)(\ell_2x)^{s+u}g(s+u)}{\G((k-1)/2)\Gamma(s+k-1)h^u} \\
\times \frac{\Gamma(1-s)\Gamma(2s+r-1)}{\Gamma(s+r)\Gamma(1-s-r)}
\lt[
		\frac{ \pi^{1-s-r} h^{1-2s-r} \vol \rho_\cusp \left(1-s-r, -h\right) } { \Gamma(1-s-r) \zeta^*(2s+2r) }   
	\overline{\langle V, E_\cusp^*\left(*, s+r \right)\rangle } 
	\rt.
	\\
	 \lt. +
	\frac{\pi^{s+r } h^{\ell} \vol \rho_\cusp \left(s+r , -h\right)}
	{\Gamma\left(s+r\right)\zeta^*(2-2s-2r )} 
	\ol{\langle V,E^*_\cusp(*,1-s-r ) \rangle} \rt] \ ds du .
	\end{multline}
	Changing $s \to s+\frac{1}{2}-r$ and changing the order of integration we get that this becomes
	\begin{multline}\label{omegapoles}
	 =	 \sum_{\substack{\mathfrak{a}, \\0\leq r\leq \kt }} \frac{(4\pi)^k(-1)^r(\ell_1\ell_2)^{(k-1)/2}}{2r!(2 \pi i)^2}\iint\limits_{(0) \ (\gamma)} \frac{\G(u)\G((k-1)/2 -u)(\ell_2x)^{\frac{1}{2}-r+s+u}g(\frac{1}{2}-r+s+u)}{\G((k-1)/2)\Gamma(s+k-\frac{1}{2}-r)h^{s+u-r}} \\
\times \frac{\Gamma(\frac{1}{2}-r-s)\Gamma(2s-r)}{\Gamma(s+\frac{1}{2})\Gamma(\frac{1}{2}-s)}
\lt[
		\frac{ \pi^{\frac{1}{2}-s} h^{-s} \vol \rho_\cusp \left(\frac{1}{2}-s, -h\right) } { \Gamma(\frac{1}{2}-s) \zeta^*(1+2s) }   
	\overline{\langle V, E_\cusp^*\left(*, \tfrac{1}{2}+s \right)\rangle } 
	\rt.
	\\
	 \lt. +
	\frac{\pi^{\frac{1}{2}+s } h^{s} \vol \rho_\cusp \left(\frac{1}{2}+s , -h\right)}
	{\Gamma\left(\frac{1}{2}+s\right)\zeta^*(1-2s)} 
	\ol{\langle V,E^*_\cusp(*,\tfrac{1}{2}-s ) \rangle} \rt] \ duds,
\end{multline}
and so we see this is actually completely analogous to the contribution from the poles at $s=\frac{1}{2}+it_j-r$ from the discrete part of the spectrum in \eqref{temp1}.

Since $D(s;h)-\Omega(s;h)$ satisfies the upper bound given in \eqref{reallyfinalupper} when $\sigma < \frac{1}{2}-\frac{k}{2}$, we see the size of the first double integral in \eqref{temp1} is controlled by the factor 
$$
h^{\hf}\left(\frac{\ell_2x}{h}\right)^{s+u} 
\max\left(1, \frac{(h, \ell_1\ell_2)^{1/4}\sigma_0(\ell_1\ell_2)^\hf h^{1/4}\log(3h)}{(\ell_1\ell_2)^\hf}\right)
$$ 
or furthermore, when $(h,N)=1$, by 
$$
h^{\hf+\theta}\left(\frac{\ell_2x}{h}\right)^{s+u}. 
$$ 
Similarly, we use Proposition~\ref{prop:innerup}, the bound in \eqref{conttriplebound}, and the expansion in \eqref{omegapoles} to show that the residual terms and the integral of $\Omega(h,s)$ are bounded by
\[
(\ell_2x)^\hf\left(\frac{\ell_2x}{h}\right)^{\Re(it_j+u)}\max \left(1, \sigma_0(\ell_1\ell_2)^\frac{1}{2}
\left(\frac{h (h,\ell_1\ell_2)}{(\ell_1\ell_2)^2}\right)^{1/4}\log(3h)\right) \]
or alternately, when $(h,N)=1$,
\[
(\ell_2x)^\hf\left(\frac{\ell_2x}{h}\right)^{\Re(it_j+u)}h^\theta.
\]
Thus in both cases, it is optimal to make the exponent as small as possible if $\ell_2x > h$, and as large as possible if 
$\ell_2x < h$.  If $\ell_2 x =h$ then it doesn't matter.

In the case $\ell_2x > h$, we take the integral component and we lower $u$ past the pole at 0, to $-1+\epsilon$.   Thus for $\theta>0$ the total contribution is on the order of 
$$
(\ell_2x)^\hf (\ell_2x/h)^\theta\max\left(1, \frac{(h, \ell_1\ell_2)^{1/4}\sigma_0(\ell_1\ell_2)^\hf h^{1/4}\log(3h)}{(\ell_1\ell_2)^\hf}\right).
$$
In particular, if $\ell_1,\ell_2$ are prime and relatively prime to $h$ then
\beq \label{thetazero}
S(x;h)\ll (\ell_2x)^\hf (\ell_2x/h)^\theta \max\left(1,\frac{h^{1/4}\log(3h)}{(\ell_1\ell_2)^\hf}\right),
\eeq
and alternatively, if $(N,h)=1$ then
\beq \notag
S(x;h)\ll (\ell_2x)^\hf (\ell_2x/h)^\theta h^\theta =  (\ell_2x)^{\hf+\theta}.
\eeq
The case where $t_j=0$ for some $j$ only alters the above argument when $\theta=0$, as it will contribute the main term. In particular, it adds terms to the residual spectrum of the form $\log(\ell_2x/h)$ rather than the $(\ell_2 x /h)^\theta$ that appears in \eqref{thetazero}.

In the case $\ell_2x < h$, we get the greatest contribution from the smallest exponent. Considering at the factor
\be\label{factor}
 h^\hf\left(\frac{\ell_2x}{h}\right)^{s+u},
\ee 
we increase $u$ past the pole at $(k-1)/2$, to $(k+1)/2-\epsilon$.   In this case the dominant contribution comes from the first double integral term in \eqref{temp1}.  Since $\sigma$ is always just slightly to the left of $\frac{1}{2}-\frac{k}{2}$ we obtain the bound
\be\label{b3}
S(x;h)\ll h^\hf (h/\ell_2x)^{\epsilon} \max\left(1,\frac{h^{1/4}}{(\ell_1\ell_2)^\hf}\right).
\ee
Alternatively, if $(N,h)=1$ then
\be\label{b4}
S(x;h)\ll h^{\hf + \theta}(h/\ell_2x)^{\epsilon}.
\ee

 This completes the proof of the Theorem \ref{theorem:singlesum}. \qed

\section{A double Dirichlet series}
\label{sect:double}
Recall that
\be\label{Dagain}
D(s;h) =  \sum_{m_1\ell_1= m_2\ell_2 + \sh} 
{a(m_1) \bar b(m_2)
\over (m_2\ell_2)^{ s+\wt-1}}
\ee
is convergent when $\Re \, s >1$.

Recall we have fixed $N_0,\ell_1,\ell_2 \in\N$,  with $N_0,\ell_1,\ell_2$ square free, $(N_0,\ell_1\ell_2)=1$ and have set $N=N_0\ell_1\ell_2/(\ell_1,\ell_2)$.   Fix  a positive integer $Q$, with $(Q,N)=1$.
For $s,w$ with real parts greater than one, the sum
\be
\label{Z0series}
Z_Q(s,w) = (\ell_1\ell_2)^{(k-1)/2} \sum_{h_0 \ge 1}\frac{ D(s;h_0Q)}{(h_0Q)^{w+(k-1)/2}}
\ee
is convergent, and can also be written 
 in the form
\be\label{Z0Def}
Z_Q(s,w) =  \sum_{{h_0 \ge 1}\atop{\ell_1m_1= \ell_2m_2 + h_0Q}}
\frac{ A(m_1)\bar B(m_2)\left( 1+ \frac{h_0Q}{\ell_2 m_2 }\right)^{(k-1)/2}  }{(\ell_2 m_2)^s(h_0Q)^{w+(k-1)/2}}.
\ee

The object of this section is to obtain the meromorphic continuation of 
$Z_Q(s,w)$ to  the part of $\C^2$ with $\Re \, w >1$.   The function $Z_Q(s,w)$ can be meromorphically continued to all of $\C^2$, but as this lengthens the argument, and we have no immediate application, we will instead refer to \cite{HLee}, where this second continuation is obtained in a different way. 

Before stating our first proposition we need to make several definitions.
For $\Re \, s' >1$, we let
\be
\label{LQDef}
L_Q(s',\overline{u_j}) = 
 \sum_{h_0 \ge 1}\frac{\overline{\rho_j(-h_0Q)}}{(h_0Q)^{s'}}.
\ee
This is, up to the presence of the factor $Q$, the familiar  $L$-series associated to $\overline{u_j}$, but with coefficients normalized so that the inner product of $u_j$ with itself equals one. 

We also define the function $\zeta_{\mathfrak{a},Q}(s',z)$, for each cusp $\cusp$, by a definition analogous to $L_Q$, with
\be
\label{zetaQDef2}\zeta_{\mathfrak{a},Q}(s',z)= 
 Q^{-{z}}\zeta(1-2z) \sum_{h\geq 1} \frac{\rho_\cusp\left(\frac{1}{2}-z, -hQ\right)}{h^{s'+z}} \nonumber.
\ee
Substituting \eqref{rhoexact} and summing, we obtain the following, as in \cite{HLee} line (2.34).
	By our assumptions,  $N$ is square free. For each cusp $\cusp=\frac{1}{w}$ for $w\mid N$, 
	we have,
	for $Q\geq 1$,
		\begin{multline}\label{zetaQexact}
	\zeta_{\cusp, Q}(s', z)
	=
	\zeta(s'+z) \zeta(s'-z)
	\left(\frac{1}{\frac{N}{w}}\right)^{\frac{1}{2}-z}
	Q^{-z}
	\prod_{p\mid N} \left(1-p^{-1+2z}\right)^{-1}
	\prod_{p\mid\frac{N}{w}} (1-p^{-(s'-z)})
	\\
	\times
	\prod_{p\mid w, p^\alpha\|Q, \atop \alpha\geq 0}
	\frac{p^{-1+2z}}{1-p^{2z}}
	\left((p-1)(1-p^{-(s'-z)})
	+ p^{\alpha 2z}(p^{2z}-p)(1-p^{-(s'+z)}) 
	\right)
	\\
	\times
	\prod_{q\mid Q, q\nmid N, \atop q^\alpha\|Q}
	\left(1-q^{2z}\right)^{-1}
	\left((1-q^{-(s'-z)}) - q^{(\alpha+1)2z}(1-q^{-(s'+z)}) \right)
	\,.
	\end{multline}

Finally, we define a ratio of Dirichlet polynomials $K^\pm_{\cusp, Q}(s')$ by setting
\be \label{bigKdef}
\Res_{z=\pm(1-s')} \zeta_{\cusp, Q}(s', z) = K^\pm_{\cusp, Q}(s') \zeta(-1+2s').
\ee

With this notation in place, we state the following
\begin{proposition}
\label{prop:ZQright}
Choose $c$, with $\hf -A < c < \hf -\kt$.  Let $\ell_1,\ell_2 \sim L$ as in Theorem \ref{thm:sum}.  The function $Z_Q(s,w)$ continues to the region $\Re \, s > c$, $\Re \, w >1$ and in this region, it is meromorphic. If $t_j\neq 0$ for all Maass forms at level $N$, then $Z_Q(s,w)$ has simple poles at the points $s = \hf-r \pm it_j$, for $0\le r <\hf-c\mp \Im\,(t_j)$. Furthermore it has poles due to $\Psi(s,w)$ as defined in \eqref{Psidef}, which are simple poles when $s =\hf -r $ for $r \geq 1$ and poles which are in the same place as the poles of $\zeta(2s+2r)^{-1}$ for $r\geq 0$. There are also simple polar lines at $w+2s+\kt= \frac{5}{2}-\ell$, for integral $\ell \ge 0$. If there is a Maass form at this level with $t_j=0$, then there are double poles at $s=\hf-r$ for $A>r \geq 0$.

When $s=\hf -r + it_j$, for $t_j \neq0$, we have
$$
\Res_{s = \hf -r + it_j}Z_Q(s,w)= (\ell_1\ell_2)^{(k-1)/2} c_{r,j} L_Q(w+(k-1)/2-r+it_j,\overline{u_j}),
$$
where the index $j$ differentiates between $+it_j$ and $-it_j$ and $c_{r,j}$ is given by \eqref{crjdef}.

When  $w+2s+\kt= \frac{5}{2}-\ell$
\begin{align}\label{sprime}
&\Res_{ w+2s+\kt= \frac{5}{2}-\ell} Z_Q(s,w) =\notag \\
& = \frac{(4\pi)^{k}\Gamma(1-s)}{2\Gamma(s+k-1)} 
	\sum_\cusp \lt[ \frac{\vol (-1)^\ell \Gamma(2s+\ell-1)Q^{-(\frac 32-s-\ell)}}
	{\ell! \pi^{s+\ell-1}
	\zeta^*(2s+2\ell)\Gamma(s+\ell)\Gamma(1-s-\ell)^2}\rt.  \\
&\times \lt. \lt(K_{\cusp, Q}^+(\tfrac 32 -s -\ell)\overline{\<V,E_\cusp^*(*, s+\ell)\>}  +K_{\cusp,  Q}^-(\tfrac32 - s -\ell)\overline{\<V,E_\cusp^*(*,1-s-\ell)\>}\rt) \rt]. \notag
\end{align}
	
Set $s' = s+w +\kt-1$ and fix $\epsilon >0$.   For $\hf-A <\Re \, s < \hf-\kt$ and $\Re \, w>1$, we have, away from the poles,  the convergent spectral expansion
$$
Z_Q(s,w) =  (\ell_1\ell_2)^{(k-1)/2}\left(S_{\text{\rm cusp}} (s,s')+ S_{\text{\emph{cts,int}}}(s,s')\right)+ \Psi(s,w),
$$

where 
\bea
\label{ZexpansionCusp}
 S_{\text{\rm cusp}}(s,s') := \frac{(4\pi)^k\G(1-s)}{2\G(s+k-1)}  \sum_j\frac{L_Q(s',\overline{u_j})\G(s - \frac12 -it_j)\G(s-\frac12 +it_j)\overline{\<V,u_j\>}}{\G(\frac12 -it_j)\G(\frac12 + it_j)},
 \eea
 $ S_{\text{\emph{cts,int}}}(s,s')$ is given by
\begin{multline}\label{contexp}
	S_{\emph{\text{cts},int}} =
	\frac{(4\pi)^{k}\Gamma(1-s)}{2\Gamma(s+k-1)} \\
    \times 
    \sum_{\mathfrak{a}}\frac{1}{2\pi i } \int_{-\ol{C_{\sigma,\sigma'}}} 
    \frac{\vol Q^{-s'} \zeta_{\mathfrak{a},Q}(s',z)\Gamma(s-\hf-z)\Gamma(s-\hf+z)}
    {\zeta^*(1+2z)\zeta^*(1-2z)\Gamma(\hf-z)\Gamma(\hf+z)}
    \ol{\langle V,E^*_\cusp(*,\thf+z) \rangle}  \ dz, 
   \end{multline}
and the curve of integration is described in \eqref{curvedef}.   We also have the following upper bounds:
For $\Re \, {s'} \ge\hf$, $s$ at least $\epsilon$ away from the poles, and $\Re \, w > 1$, there exists a constant $A''$, depending only on $k$ and $A$, such that
\be\label{Z2}
Z_Q(s,w)- \Psi(s,w) \ll  Q^{\theta - {\Re(s')+\epsilon}}(1+|s|)^{A''}(1 + |{s'}|)^{1+\epsilon}.
\ee 

For $\Re \, {s'} <\hf$ and  $\Re \, w > 1$,
\begin{align} & \label{Z3}
Z_Q(s,w) - \Psi(s,w) \\
& \ \ \ \ \notag \ll_{\ell_1,\ell_2}  Q^{-\Re(s')+\epsilon}\prod_{p^\gamma ||Q}
 \left|\overline{\lambda_j(p^\gamma)} - \overline{\lambda_j(p^{\gamma -1})}p^{-{s'}}\right| 
(1+|s|)^{A'}(1 + |{s'}|)^{1-2\Re({s'})}.
\end{align}
In this last case, the dependence on $\ell_1,\ell_2$ is suppressed, as our applications do not require a precise estimate in this region.

\end{proposition}

\begin{proof}
Applying Proposition~\ref{prop:D0}, in particular the upper bound \eqref{critbound} and the formulation for $\Omega(s;h)$ in \eqref{omega2}, we see that the summation \eqref{Z0series} is locally normally convergent, for $w$ with sufficiently large real part,  when $c< \Re \, s <2$.  For $s$ in the range $\Re \, s < \hf -\kt-\epsilon$,  each $D(s;h_0Q)$ can be represented by a convergent spectral sum, when $s$ is a distance of at least $\epsilon$ from any pole.    Applying Proposition~\ref{prop:D0}, when $s$ is in this range we can interchange this sum over $t_j$ with the sum over $h_0$ and obtain, for $ \Re \, w$ sufficiently large, a spectral expansion for $Z_Q(s,w)$.   Writing $s' = w + s + \kt -1$ to simplify this and further equations, we write:
\be
\label{Zinit}
Z_Q(s,w) = (\ell_1\ell_2)^{(k-1)/2}\left(S_{\text{cusp}} (s,s')+ S_{\text{cts}}(s,s')\right),
\ee
where $S_{\text{cusp}}(s,s')$  is given by \eqref{ZexpansionCusp},
 and 
\begin{multline}\label{ZexpansionCont}
  S_{\text{cts}}(s,s')
	:=  
	\frac{(4 \pi)^{k} \G(1-s)}{2\Gamma(s+k-1)} 
   \sum_\cusp  
   \lt[  \frac{1}{2\pi i} 
   \int_{-\ol{C_\sigma}} 
   \frac{\vol Q^{-s'} \zeta_{\cusp, Q}(s', z)}   
   {\zeta^*(1-2z) \zeta^*(1+2z) } 
   \rt.
   \\
   \times
   \frac{\G(s-\hf -z) \G(s-\hf + z) }
   {\G(\hf - z)\G(\hf + z)}
   \ol{\langle V,E^*_\cusp(*,\thf+z) \rangle}  \ dz 
	\\
	\lt. 
	+
	\sum_{\ell=0}^{\lfloor\hf-\sigma\rfloor} 
	\lt(
	\frac{\vol Q^{-s'}\zeta_{\cusp, Q} \left(s', \hf-s-\ell\right) } { \zeta^* (2-2s-2\ell) \zeta^*(2s+2\ell) }   
	\overline{\langle V, E_\cusp^*\left(*, s+\ell \right)\rangle } 
	\rt.
	\rt.
	\\
	+ 
	\lt. 
	(1-\pmb{\delta}_{\sigma, \ell} )
	\frac{\vol Q^{-s'} \zeta_{\cusp, Q} \left(s', -\hf+s+\ell\right)}
	{\zeta^*\left(2s+2\ell \right)\zeta^*(2-2s-2\ell )} 
	\ol{\langle V,E^*_\cusp(*,1-s-\ell ) \rangle}
	\rt)
	\\
	\lt. 
	\times
	\frac{(-1)^{\ell} \Gamma(2s+\ell-1)}
	{\ell! \Gamma(s+\ell) \Gamma(1-s-\ell)}
	\rt] .
	\end{multline}	
Here $C_\sigma$ and $\pmb{\delta}_{\sigma,\ell}$ are as described in \eqref{csigma} and \eqref{deltasigma}. 
This sum has been taken in the region where the zeta function converges,  where $\Re \, s' >1$.
We will now extend the region in which $Z_Q(s,w)$ is meromorphic to the region  $\Re \, w >1$, $\Re \, s < \hf -\kt$, with $\Re \, s' \le 1$.    We begin by determining the regions of absolute convergence of \eqref{ZexpansionCusp} and  \eqref{ZexpansionCont}.

Our first objective is to find an upper bound for
\begin{multline}\label{target}
\sum_{|t_j| \sim T}L_Q({s'},\overline{u_j})\overline{\<V,u_j\>} = \sum_{|t_j| \sim T}
 \sum_{h_0 \ge 1}\frac{\overline{\rho_j(-h_0Q)}\overline{\<V,u_j\>}}{(h_0Q)^{s'}}.
\\=
\sum_{|t_j| \sim T}L_1({s'},\overline{u_j})
\overline{\<V,u_j\>}Q^{-s'}\prod_{p^\gamma ||Q}
\left(\overline{\lambda_j(p^\gamma)} - \overline{\lambda_j(p^{\gamma -1})}p^{-{s'}}\right).
\end{multline}
Note that as $Q$ has been chosen to be relatively prime to $N$, we may assume that, for both oldforms and newforms, the $u_j$ are eigenfunctions of the Hecke operators at primes $p$ dividing 
$Q$, with eigenvalues $\lambda_j(p)$.   A consequence of this is that for $(n,Q)=1$, and $p$ dividing $Q$, 
$$
\rho_j(np^\gamma) = \rho_j(n)\lambda_j(p^\gamma).
$$
The right hand side of \eqref{target} then follows after a simple computation.

Suppose that $\Re \,{s'} <0$.  If $u_j$ is a newform then by the functional equation we have the bound for a single $L_1({s'},\overline{u_j})$, which we denote simply by $L({s'},\overline{u_j})$. 
$$
L({s'},\overline{u_j}) \ll (1 +|{s'}|+ |t_j|)^{1-2{\Re \, s'}}(\ell_1 \ell_2)^{\hf -{\Re \,s'}}|L({1-s'},\overline{u_j})|.
$$
If $u_j$ is an oldform then the dependence on $\ell_1 \ell_2$ is slightly more complicated.   As we do not need precise dependence on $\ell_1 \ell_2$ for our applications, we will suppress the dependence on $\ell_1 \ell_2$ and,
for $\Re \, {s'} <0$, will use the simpler bound,
\begin{multline}
\label{LQboundnu0}
L_Q({s'},\overline{u_j})\\ \ll_{\ell_1,\ell_2}\left| Q^{-\Re \,s'}\prod_{p^\gamma ||Q}
\left(\overline{\lambda_j(p^\gamma)} - \overline{\lambda_j(p^{\gamma -1})}p^{-{\Re \,s'}}\right)
(1 + |s'|+ |t_j|)^{1-2{\Re \,s'}}L({1-s'},\overline{u_j})\right|.
\end{multline}

For $\Re \, {s'} \ge \hf$, we apply Cauchy-Schwarz to the left hand side of \eqref{target}, obtaining
$$
\left(\sum_{|t_j| \sim T}L_Q({s'},\overline{u_j})\overline{\<V,u_j\>}\right)^2 \le 
\left(\sum_{|t_j| \sim T}|L_Q({s'},\overline{u_j})|^2e^{-\pi |t_j|}\right)\left(\sum_{|t_j| \sim T} |e^{\pi |t_j|} \overline{\<V,u_j\>}\   |^2\right)
$$
From Proposition~\ref{prop:Rezresult} we have
\be\label{partbound2}
\sum_{|t_j| \sim T}e^{\pi |t_j|}|\overline{\<V,u_j\>}|^2  
\ll  L^{-2k}T^{2k}\log T.
\ee

For $\Re \, s' \ge \hf$, the $L$-series average is addressed by the bound,
\be 
\label{LQbound12}
\sum_{|t_j|\sim T}e^{-\pi|t_j|} |L_Q(s', \overline{u_j})|^2 \ll Q^{2\theta-2\Re \, s' +\epsilon}L^{2+\epsilon}
(1+ |s'| +T)^{2 + \epsilon}.
\ee

\begin{remark}
By \eqref{target},
$$
L_Q({s'},\overline{u_j}) \approx \overline{\lambda_j(Q)}Q^{-s'}L({s'},\overline{u_j}).
$$
To obtain \eqref{LQbound12} we have simply estimated  $|\overline{\lambda_j(Q)}|^2$ from above, using the best progress toward the Ramanujan Conjecture, and applied a simple second moment estimate for the $L$-series part that is uniform in the level.  
\end{remark}

Combining \eqref{partbound2} with \eqref{LQbound12} gives us the desired result. For $\Re \, {s'} \ge \hf$,
\be\label{targetend}
\sum_{|t_j| \sim T}L_Q({s'},\overline{u_j})\overline{\<V,u_j\>} 
\ll 
Q^{\theta-s'+\epsilon}L^{1-k+\epsilon}
(1 + |\gamma'|+ |T|)^{1+k+\epsilon}.
\ee

To understand the region of absolute convergence
we now rewrite  \eqref{ZexpansionCusp} as 
$$
S_{\text{cusp}} = \lim_{T \rightarrow \infty} S_{\text{cusp}}(T),
$$
with
\bea\label{cuspZbound1}
 &&S_{\text{cusp}}(T) = \frac{2^{2k-1}\pi^k\G(1-s)}{\G(s+k-1)}\\
 &\times&  \sum_{|t_j| \ll T}\frac{L_Q({s'},\overline{u_j})\G(s - \frac12 -it_j)\G(s-\frac12 +it_j)\overline{\<V,u_j\>}}{\G(\frac12 -it_j)\G(\frac12 + it_j)} \nonumber
 \eea

The sum $S_{\text{cusp}}(T)$ can in turn be rewritten, after breaking up the interval $[0,T]$ dyadically, as
$$
S_{\text{cusp}}(T) \ll \sum_{i=0}^{\infty} S_{\text{cusp}}'(T/2^{i}) 
$$
where $S_{\text{cusp}}'(T)$ is the same as $S_{\text{cusp}}(T)$, but with the condition $|t_j| \ll T$  replaced by 
$|t_j|\sim T$.
By Stirling's formula we have
\be
\label{gammapart}
\frac{\G(1-s)\G(s - \frac12 -it_j)\G(s-\frac12 +it_j)}{\G(s+k-1)\G(\frac12 -it_j)\G(\frac12 + it_j)} \ll (1 + |t_j|)^{2\sigma -2}(1+|s|)^{3-k-3\sigma},
\ee
which is valid regardless of the relative sizes of $t_j$ and $\Im \, s$ for $s$ at least $\epsilon$ away from poles.

We now substitute the upper bounds of \eqref{LQboundnu0},  \eqref{targetend} and 
\eqref{gammapart}, obtaining,  
for $\Re \, {s'} <0$,
\bea\label{S0L}
&&S_{\text{cusp}}'(T) \\
&\ll_{ \ell_1,\ell_2}& Q^{\max(\theta - \Re \,s', -2\Re \,s')+\epsilon}(1 + |{s'}| +T)^{2-2\Re \,s'+ \epsilon }T^{2\sigma+ k-2 }(1+|s|)^{3-k-3\sigma}. 
\nonumber
\eea
In the case $\Re \, {s'} \ge\hf$, we similarly get the more precise bound
\bea\label{S12L}
&&(\ell_1\ell_2)^{(k-1)/2}S_{\text{cusp}}'(T) \\
&\ll& Q^{\theta - \Re \,s'+\epsilon}(1 + |{s'}| +T)^{1+\epsilon}
T^{2\sigma + k-2} (1+|s|)^{3-k-3\sigma}. 
\nonumber
\eea
Note that there may be exceptional poles on the real line, close to $s=1/2-k/2$.   As long as $s$ is at least $\epsilon$ away from any of these, the contribution from this piece of the sum has already been accounted for in the estimate.
The interval $0 \le \Re \, {s'} \le\hf$ is filled in by convexity.
Convergence of the expansion given in  \eqref{ZexpansionCusp} as $T\rightarrow \infty$,  away from the poles, will  occur  when the exponent of $T$ in \eqref{S0L} and  \eqref{S12L} is negative.   This occurs for 
 \be\label{cuspexps}
\Re \, s<  \begin{cases} \Re \, {s'}  - \tkt & \text{ if $\Re \,{s'} < \hf$,}\\
\hf - \tkt & \text{ if $\Re\, {s'} \ge \hf$. }
\end{cases}
\ee
As $s' = s+w +\kt-1$, this implies that
$S_{\text{cusp}}(s,s')$ is convergent and analytic in $s$ and $s'$ away from poles when $\Re \, s < \hf -\tkt$ and $\Re \, w >1$, 
and satisfies
\be\label{Scuspbound}
S_{\text{cusp}}(s,s') 
\ll_{\ell_1,\ell_2} Q^{\theta -\Re \, s'}(1 + |{s'}| )^{1+\epsilon}
(1+|s|)^{3-k-3\sigma}. 
\ee

\subsection{The continuous piece}
We now turn to $S_{\text{cts}}(s,s')$.   This piece converges in the same region as $S_{\text{cusp}}(s,s')$ and satisfies a stronger upper bound.    To demonstrate this we need to estimate
$$
   \sum_\cusp  
   \frac{1}{2\pi i} 
   \int_{(0)} 
   \frac{\vol Q^{-s'} \zeta_{\cusp, Q}(s', z)}   
   {\zeta^*(1-2z) \zeta^*(1+2z) } 
    \frac{\G(s-\hf -z) \G(s-\hf + z) }
   {\G(\hf - z)\G(\hf + z)}
   \ol{\langle V,E^*_\cusp(*,\thf+z) \rangle}  \ dz.
$$

Recall again that $N$ is square-free.  
As $f$ and $g$ are new forms, 
by \cite{Asa76}, applying Lemma 5.7 in \cite{HLee}, 
and summing over the cusps, we see that 
\begin{multline}\label{aa}
 \sqrt \vol \sum_\cusp	\left<V, E^*_\cusp\left(*, \tfrac{1}{2}+z\right)\right> \\ \ll \zeta^*(1+2z)  L^{-k-1}(1 + |t|)^{k-1}e^{-\pi |t|/2}L(\thf+ it, f \otimes \overline{g})\log (1+|t|)N^\epsilon.
\end{multline}

Applying the convexity bound to $L(\thf+ it, f \otimes \overline{g})$, Stirling's formula, \eqref{zetaQexact}, and the fact that
\be\label{bb}
\int_0^T \zeta(s'+it)\zeta(s'-it) dt \ll_{s'} T^{1+\epsilon}
\ee
when $\Re \, s' \geq \frac{1}{2}$
with at most polynomial dependence on $s'$,
we obtain convergence in the desired region.

In this region we observe that $S_{\text{cts}}(s,s')$ has poles due the continuous part of $D(s;h)$ at $s = \hf -r$ for $r \geq 1$, and when $s = \frac \vho 2 -r$ with $r \geq 0$ and  $\vho$ a nontrivial zero of $\zeta$.  We also observe that there are new potential poles in this region due to $\zeta(s'-\hf+s+\ell)$, where $\ell \leq\lfloor\hf -\sigma \rfloor$. There are no poles due to $\zeta(s'+\hf-s-\ell)$ in this region since $-\Re \, s > \ell -\hf$ so $\Re \, s' +\hf -\Re \, s -\ell >1$. 

If we fix $\Re \, s$ at least $\epsilon>0$ away from the points $\frac{1}{2}-r$ for $r \geq 0$, then for $\Re \, s' >1$, the integral component of the continuous contribution in \eqref{ZexpansionCont} is, for $\Re \, w $ sufficiently large and $\Re \, s < \hf -\kt$, 
\begin{align}
 \label{ZexpansionCont2}
 &S_{\text{int}}(s,s') :=
\frac{(4\pi)^{k}\Gamma(1-s)}{2\Gamma(s+k-1)} \\
& \   \times   \sum_{\mathfrak{a}} \lt(  \frac{1}{2\pi i } \int_{(0)} \frac{\vol Q^{-s'}\zeta_{\mathfrak{a}, Q}(s', z)\Gamma(s-\hf-z)\Gamma(s-\hf+z)}{\zeta^*(1+2z)\zeta^*(1-2z)\Gamma(\hf-z)\Gamma(\hf+z)}\frac{}{}\ol{\langle V,E^*_\cusp(*,\thf+z) \rangle} \ dz \rt). \notag 
\end{align} 
This converges when $\Re \, s' >1$, as the zeta factors are $\ll 1$ and the gamma factors and inner products satisfy the same upper bounds as in the cuspidal contribution.
To meromorphically continue this integral to the region $\Re \, s' \le 1 $, we take $s'$ to be within the region to the right of $\Re \, s' =1$ and to the left of $1+C$, where $C(x)$ is as in  description of \eqref{shift1}. Without loss of generality, we can also choose $\epsilon>0$ such that $\epsilon >|\Re \, C(x)|>0$ for all $x$.   Shifting the line of integration to $-C$, a pole of $\zeta_{\mathfrak{a}, Q}(s', z)$ is passed over at $z = 1-s'$, giving
\begin{align}
&S_{\text{int}}(s,s') = \label{ZexpansionContRes}
\frac{(4\pi)^{k}\Gamma(1-s)}{2\Gamma(s+k-1)} \\
&    \times   \lt[ \sum_{\mathfrak{a}}   \frac{1}{2\pi i } \int_{-C} \frac{\vol Q^{-s'}\zeta_{\mathfrak{a}, Q}(s', z)\Gamma(s-\hf-z)\Gamma(s-\hf+z)}{\zeta^*(1+2z)\zeta^*(1-2z)\Gamma(\hf-z)\Gamma(\hf+z)}\frac{}{}\ol{\langle V,E^*_\cusp(*,\thf+z) \rangle} \ dz \rt. \notag \\
\notag
 &\ \ \ \ \ +\lt.  \vol Q^{-{s'}} 
K^+_{\cusp, Q}(s') \frac{\G(s + \frac12 -s')\G(s- \frac32 +s')\overline{\<V,E_\cusp^*(*,\frac 32 -s')\>}}
{\pi^{\hf -s'}\zeta^*(3-2s')\G( s'-\frac12)^2\G( \frac32  -s')} \rt]\,.
\end{align}
Note the cancellation of $\zeta^*(1-2z)$ in the denominator.  We see that the above expression is analytic for $s'$ to the right of $1-C$ and to the left of $1+C$. If we then take $s'$ to be to the left of $\Re s' =1$, we can shift the line of integration back to $\Re \, z=0$, passing over the pole at $z=s'-1$ and so we get
\begin{align}
&S_{\text{int}}(s,s') = \label{ZexpansionCont3}
\frac{(4\pi)^{k}\Gamma(1-s)}{2\Gamma(s+k-1)} \\
&    \times   \lt[ \sum_{\mathfrak{a}}   \frac{1}{2\pi i } \int_{-\overline{C_\sigma}} \frac{\vol Q^{-s'}\zeta_{\mathfrak{a}, Q}(s', z)\Gamma(s-\hf-z)\Gamma(s-\hf+z)}{\zeta^*(1+2z)\zeta^*(1-2z)\Gamma(\hf-z)\Gamma(\hf+z)}\frac{}{}\ol{\langle V,E^*_\cusp(*,\thf+z) \rangle} \ dz \rt. \notag \\
\notag
 & \ \ \ \ \ +\lt.  
 \frac{\G(s + \frac12 -s')\G(s- \frac32 +s')}
{Q^{s'}\pi^{\hf -s'}\zeta^*(3-2s')\G( s'-\frac12)^2\G( \frac32  -s')} \rt. \notag \\
& \ \ \ \ \ \  \times \lt. \lt(K_{\cusp, Q}^+(s')\overline{\<V,E_\cusp^*(*,\tfrac 32 -s')\>}+K_{\cusp, Q}^-(s')\overline{\<V,E_\cusp^*(*,s' -\tfrac 12)\>}\rt)  \rt] \notag
\end{align}
which, still for fixed $s$ at least $\epsilon$ away from $\hf -r$ for $r\geq0$, is a meromorphic function in $s'$  defined for $\Re s' < 1$. We observe that $S_{\text{int}}$ only has poles in $s'$ due to $\Gamma(s+\hf-s')$ and $\Gamma(s-\frac 32+s')$, and we will show next that only the latter function contributes poles in $S_{\text{cts}}$. Indeed, after following nearly identical reasoning as above in resolving the case where $s=\hf-r$ for $r \geq 0$, we derive the following meromorphic expansion for $S_{\text{cts}}$, for all  $\hf-\tkt>\Re \, s =\sigma >\hf-A$ and all $s'$ with $\Re \, s' =\sigma'$ such that the integral component is convergent,
\begin{multline}\label{contexp}
	S_{\text{cts}} =
	\frac{(4\pi)^{k}\Gamma(1-s)}{2\Gamma(s+k-1)} \\
    \times 
    \sum_{\mathfrak{a}}\frac{1}{2\pi i } \int_{-\ol{C_{\sigma,\sigma'}}} 
    \frac{\vol Q^{-s'} \zeta_{\mathfrak{a},Q}(s',z)\Gamma(s-\hf-z)\Gamma(s-\hf+z)}
    {\zeta^*(1+2z)\zeta^*(1-2z)\Gamma(\hf-z)\Gamma(\hf+z)}
    \ol{\langle V,E^*_\cusp(*,\thf+z) \rangle}  \ dz 
    \\ +\Psi(s,w),
\end{multline}
where

\begin{multline}\label{Psidef}
	\Psi(s,w)
	\\
	:=	
	\frac{(4\pi)^{k}\Gamma(1-s)}{2\Gamma(s+k-1)}
	\sum_{\mathfrak{a}} \bigg[ 
	\sum_{\ell=0}^{\lfloor\hf-\sigma\rfloor} 
	\lt(
	\frac{\vol  
	Q^{-s'} \zeta_{\cusp, Q} \left(s', -\frac{1}{2}+s+\ell \right) } 
	{ \zeta^* (2-2s-2\ell) \zeta^*(2s+2\ell) }   
	\overline{\langle V, E_\cusp^*\left(*, s+\ell \right)\rangle } 
	\rt.
	\\
	+ 
	\lt. 
	(1-\pmb{\delta}_{\sigma, \ell} )
	\frac{\vol Q^{-s'} \zeta_{\cusp, Q} \left(s', \frac{1}{2} -s-\ell\right)}
	{\zeta^*\left(2s+2\ell \right)\zeta^*(2-2s-2\ell )} 
	\ol{\langle V,E^*_\cusp(*,1-s-\ell ) \rangle}
	\rt)
	\frac{(-1)^{\ell} \Gamma(2s+\ell-1)}
	{\ell! \Gamma(s+\ell) \Gamma(1-s-\ell)}
	\\
	+
	\pmb{\delta}'_{\sigma'}
 	\frac{\G(s + \frac12 -s')\G(s- \frac32 +s')}
	{Q^{s'}\pi^{\hf -s'}\zeta^*(3-2s')\G( s'-\frac12)^2\G( \frac32  -s')}  
	\\
	\times 
	\lt(K_{\cusp, Q}^+(s')
	\overline{\<V,E_\cusp^*(*,\tfrac 32 -s')\>}
	+(1-\pmb{\delta}_{\sigma',-\hf})K_{\cusp, Q}^-(s')
	\overline{\<V,E_\cusp^*(*,s' -\tfrac 12)\>}\rt) \bigg]
	\,.
	\end{multline}
	Here $K_{\cusp, Q}^\pm(s')$ is the ratio of Dirichlet polynomials defined by \eqref{bigKdef},
	$\pmb{\delta}_{\sigma,\ell}$ is as in \eqref{deltasigma} and similarly
 \beq \label{deltaprime}
 \pmb{\delta}'_{x} = \lt\{
 \begin{array}{ll}
 0 & \mbox{ when } x >1 \\
 1 & \mbox{ when } x \leq 1.
 \end{array}
\rt.
 \eeq
 Morally, we define $C_{\sigma,\sigma'}$ to be $(0)$ or a curve slightly to the right of $(0)$ such that the integrand of the integral portion of $S_{\text{cts}}$ doesn't run afoul of any poles. More specifically we define it to be
 \beq\label{curvedef}
 C_{\sigma,\sigma'} =\lt\{ 
 \begin{array}{ll}
 (0) & \mbox{ when  } \sigma'\neq 1 \mbox{ and }\sigma \neq \hf -r \mbox{ for } r \in \Z_{\geq 0} \\
C & \mbox{ when  } \sigma'=1 \mbox{ and }\sigma = \hf -r \mbox{ for } r \in \Z_{\geq 0} \\
\epsilon_\sigma C &  \mbox{ when  } \sigma = 1 \mbox{ and }\sigma \neq \hf -r \mbox{ for } r \in \Z_{\geq 0} \\
\epsilon'_{\sigma'} C & \mbox{ when  } \sigma' \neq 1 \mbox{ and }\sigma = \hf -r \mbox{ for } r \in \Z_{\geq 0}
 \end{array} 
 \rt. 
 \eeq
 where $C$ is as in the description accompanying \eqref{shift1}. We choose $1>\epsilon_d,\epsilon'_{d'}>0$ to be appropriate scaling factors such that $|\sigma-\hf+r|>|\Re \, \epsilon_\sigma C(x)|$ for all $r \in \Z_{\geq 0}$ and $|\sigma'-1|> |\Re \, \epsilon_{\sigma'} C(x)|$ in the respective cases. 

As remarked before, the right hand side of \eqref{contexp} converges in a superset of the region where $S_{\text{cusp}}(s,s')$ converges.  We also note that $S_{\text{cts}}(s,s')$ indeed has simple poles at $s =\hf-r$ for $r \in \Z_{\geq 1}$ and  poles at $\frac \vho 2 -r$ for $r \in \Z_{\geq 0}$  where $\vho$ is a non-trivial zero of $\zeta$. The only other poles to consider are the potential simple poles at $s' =\frac 3 2 -s-\ell$ and $s' = \hf+s+\ell$ for $\ell \geq 0$. The latter poles should not exist, as we already noted above that they cannot appear when $\Re \, s' >1$. Indeed, suppose that $\Re \, s' \leq 1$ along the line $s'=\hf+s+\ell$, then $\Re \, s \leq  \hf -\ell$, and a simple omitted computation shows that these poles do indeed cancel. We similarly compute the residues when $s'=\frac{3}{2}-s-\ell$, although we no longer necessarily assume that $\Re \, s' \leq 1$,
\begin{align}
&\Res_{s'=\frac 32-s-\ell} S_{\text{cts}}(s,s')
	= \frac{(4\pi)^{k}\Gamma(1-s)}{2\Gamma(s+k-1)}  \\
& \  \lt. \times \sum_\cusp \lt[\pmb{\delta}'_{\hf+\sigma+\ell} 
	\frac{\vol (-1)^\ell \Gamma(2s+\ell-1)Q^{-(\frac 32-s-\ell)}}
	{\ell! \pi^{s+\ell-1}\zeta^*(2s+2\ell)\Gamma(s+\ell)\Gamma(1-s-\ell)^2}  \rt. \rt. \notag \\
& \ \ \ \ \  \times \lt.\lt(K_{\cusp, Q}^-(\tfrac 32 -s - \ell)\ol{\<V,E^*_\cusp(*,1-s-\ell)\>}\rt. \rt. \notag \lt.  +(1-\pmb{\delta}_{\sigma,\ell})K_{\cusp, Q}^+(\tfrac 32 -s -\ell)
	\ol{\< V,E^*_\cusp(*,s+\ell)\>} \rt) \ \notag \\
& \  +\lt. \pmb{\delta}'_{\frac 32 -\sigma-\ell} 
 \frac{\vol (-1)^\ell \G(2s+\ell-1)}
{\ell! Q^{\frac{3}{2}-s-\ell}\pi^{s+\ell-1}\zeta^*(2s+2\ell)\G( 1-s-\ell)^2\G( s+\ell)} \notag \rt. \\
&  \ \ \ \ \  \times \lt. \lt(K_{\cusp, Q}^+(\tfrac 32 -s -\ell)
	\overline{\<V,E_\cusp^*(*, s+\ell)\>} \rt. \rt. \notag \\
&  \ \ \ \ \  \ \ \ \ \ \lt. \lt. +(1-\pmb{\delta}'_{\hf+\sigma+\ell})K_{\cusp, Q}^-(\tfrac32 - s -\ell)\overline{\<V,E_\cusp^*(*,1-s-\ell)\>}\rt) \rt] \notag \\
& = \frac{(4\pi)^{k}\Gamma(1-s)}{2\Gamma(s+k-1)} 
	\sum_\cusp \lt[ \frac{\vol (-1)^\ell \Gamma(2s+\ell-1)Q^{-(\frac 32-s-\ell)}}
	{\ell! \pi^{s+\ell-1}
	\zeta^*(2s+2\ell)\Gamma(s+\ell)\Gamma(1-s-\ell)^2}\rt. \notag  \\
&\times \lt. \lt(K_{\cusp, Q}^+(\tfrac 32 -s -\ell)\overline{\<V,E_\cusp^*(*, s+\ell)\>}  +K_{\cusp Q}^-(\tfrac32 - s -\ell)\overline{\<V,E_\cusp^*(*,1-s-\ell)\>}\rt) \rt]. \notag
\end{align}
The above computation gives us \eqref{sprime} of the proposition.   

\subsection{Removing the poles of $Z_Q(s,w)$}
We have now determined the location of the poles of $S_{\text{cts}}(s,s')$, and these poles are cancelled by
$$\frac{B_A(s)}{\Gamma(s'-\frac32 +s)\Gamma(2s-1)},$$
where
$$
B_A(s):= \prod_{\ell=0}^{[A \rfloor} \zeta(2s+2\ell)(s+\ell-\thf)^2.
$$
It follows that $$\frac{B_A(s)}{\Gamma(s'-\frac32 +s)\Gamma(2s-1)}S_{\text{cts}}(s,s')$$ is analytic in its region of convergence.
This region includes all $(s,w)$ with $c< \Re \, s <2$ and $\Re \, w$ sufficiently large, and all $(s,w)$ with $c< \Re \, s <\hf -\kt$ and $\Re \, w >1$, and all  $(s,w)$ with $\Re \, s >1 $ and $\Re \, w >1 $.  

Returning to the cuspidal piece,
we recall the analytic continuation of $D(s;h)$ given in Proposition~\ref{prop:D0}
and define $\tilde D(s;h)$ by
\be
\label{DtildeDef}
\tilde D(s;h) = D(s;h)e^{s^2}  -  \sum_{t_j\neq0,0\le r <\hf -c -\Im(t_j)}\frac{ c_{r,j} h^{r-it_j}\overline{\rho_j(h)}
e^{(\hf -r+ it_j)^2}}{s-(\hf -r +it_j)}.
\ee
The function $e^{s^2}$ has been included to insure that the sum over $t_j$ above converges.   A polynomially decaying function in the imaginary part of $s$ would suffice, but $e^{s^2}$ is convenient as it introduces no new poles. 

By construction, $\tilde D(s;h)$ is analytic for all $s$ with $\Re \, s >c$.
Now define
\be
\label{ZtildeDef}
\tilde Z_Q(s,w) =\left(\ell_1\ell_2\right)^{(k-1)/2} \sum_{h_0 \ge 1}\frac{\tilde D(s;h_0Q)}{(h_0Q)^{w+(k-1)/2}}.
\ee
By Proposition~\ref{prop:D0}, for   $c< \Re \, s <2$, the sum over $h_0$ in  $Z_Q(s,w)$ converges when $s$ at least $\epsilon$ away from poles and
when $\Re \, w +(k-1)/2 >2-2c$.  The sum 
$$
\sum_{h_0 \ge 1} \sum_{t_j,0\le r <\hf -c - \Im(t_j) }\frac{ c_{r,j}( h_0Q)^{r-it_j}\overline{\rho_j( h_0Q))}
e^{(\hf-r+it_j)^2}}{( h_0Q)^{w + (k-1)/2}(s-\hf +r -it_j)}
$$
also converges in the larger region where  $\Re \, w +(k-1)/2 > 3/2+ \theta  -c$.  Thus $\tilde Z_Q(s,w)$ converges and $\tilde Z_Q(s,w)$ is analytic along the lines  $(\hf-r + it_j, w)$ in the region $c< \Re \, s <2 $, $\Re \, w +(k-1)/2 > 2-2c$.    

By  \eqref{DtildeDef}
\be
\label{ZPDef}
\tilde Z_Q(s,w) = Z_Q(s,w)e^{s^2}- P_Q(s,w),
\ee
where
\begin{align}
\label{PDef}
P_Q(s,w) &=   (\ell_1\ell_2)^{(k-1)/2}\sum_{t_j \neq 0 ,0\le r <\hf -c-\Im(t_j),h_0 \ge 1}\frac{ c_{r,j} (h_0Q)^{r-it_j}\overline{\rho_j(h_0Q)}e^{(\hf -r+ it_j)^2}}
{(h_0Q)^{w+(k-1)/2}(s-(\hf -r +it_j))}
\\ & = (\ell_1\ell_2)^{(k-1)/2} \sum_{t_j\neq 0,0\le r <\hf-c-\Im(t_j)}\frac{ c_{r,j} L_Q(w+(k-1)/2-r+it_j,\overline{u_j})e^{(\hf -r+ it_j)^2}}{s-(\hf -r +it_j)}. \notag
\end{align}

By construction, when $t_j \neq 0$, 
\bea
\label{Zresform}
&&\Res_{s = \hf -r +it_j} Z_Q(s,w)e^{s^2} = \Res_{s = \hf -r +it_j}P_Q(s,w)\\
&=& (\ell_1\ell_2)^{(k-1)/2} c_{r,j} L_Q(w+(k-1)/2-r+it_j,\overline{u_j})e^{(\hf -r+ it_j)^2}.\nonumber
\eea

The interchange in the order of summation above is allowed if the real part of $w$ is sufficiently large.   Once interchanged, it is clear that, given the analytic continuation of 
$L_Q(w+(k-1)/2-r+it_j,\overline{u_j})$, the summation converges for any fixed value of $w$.
Thus the sum defining $P_Q(s,w)$ converges and is, again by construction,  analytic for all points  $s,w \in \C$ with $\Re\, s > c$, except for polar lines at  $(s,w) =(\hf-r + it_j,w )$. If $t_j=0$ for some Maass form of this level then we have double poles at $s=\hf-r$ for $r \geq 0$ and the computation is slightly different. 

The function $\tilde Z_Q(s,w)$ has had its poles of the form $(\hf-r + it_j,w )$ removed, and multiplying by 
$B_A(s)/(\Gamma(s'+s-\frac32)\Gamma(2s-1))$ removes the remaining poles coming from the continuous spectrum.  By the previous analysis, it is also convergent in a region including all $(s,w)$ with $c< \Re \, s <2$ and $\Re \, w$ sufficiently large, and all $(s,w)$ with $c< \Re \, s <\hf -\kt$ and $\Re \, w >1$, and all  $(s,w)$ with $\Re \, s >1 $ and $\Re \, w >1 $.  
The convex  hull of this tube domain is all   $(s,w)$ with 
$\Re \, w >1$ and $\Re \, s >c$.   Consequently, by B\^ochner's theorem, $\tilde Z_Q(s,w)$ is analytic in this convex hull. 
This completes the proof of the proposition.
\end{proof}

\section{Double shifted sums}\label{sect:Zsums}

\subsection{Removing the weighting factor}
The unweighted but smoothed sum we are interested in estimating is
$$
S_Q(x,y) = \sum_{\ell_1m_1 = \ell_2m_2 + hQ \atop h, m_2 \ge 1}
A(m_1)\bar B(m_2) G_1(m_1/y)G_2(m_2/x).
$$
The above sum is related to $Z_Q(s,w)$ via a triple inverse Mellin transform, as set forth in the following proposition.
\begin{proposition}\label{prop:notprime}
Suppose $G_1(x), G_2(x)$  are smooth with compact support in the interval $[1,2]$, with 
$g_1(s),g_2(s)$ denoting  the corresponding Mellin transforms as before.
For $x \gg 1$ and $y\gg1$,
\begin{multline}\label{Sxydef}
S_Q(x,y) = \left(\frac{1}{2 \pi i}\right)^3\int_{(\gamma_1)}\int_{(\gamma_2)}\int_{(\gamma_3)}
\frac{Z_Q(s+w-u, u+(1-k)/2) }{\Gamma((k-1)/2 +w)}\\ \times \Gamma((k-1)/2 +w -u) \Gamma(u)\ell_1^w\ell_2^sx^s y^w g_1(w)g_2(s)du dw ds.
\end{multline}
Here $\gamma_1,\gamma_2,\gamma_3$ are chosen so that $\Re \, s+w-u >1$,
 $\Re \,  u+(1-k)/2 >1$, and $\Re \, (k-1)/2 +w -u >0$. In particular we choose
$ \gamma_1 = (k+1)/2$, $ \gamma_2 = 1+ 2 \epsilon$, 
$ \gamma_3 = (k+1)/2 + \epsilon$, for any $\epsilon >0$.
\end{proposition}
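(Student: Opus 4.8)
The plan is to unfold the two smooth cutoffs by Mellin inversion, recognise the resulting shifted Dirichlet sum, and then introduce the third variable $u$ via the beta-integral identity \eqref{GRid}, matching against the defining series \eqref{Z0Def} of $Z_Q$.

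First I would write, in the Mellin convention of Section~\ref{sect:app} (cf. the proof of Proposition~\ref{prop:DtoS}),
\[
G_1(m_1/y) = \frac{1}{2\pi i}\int_{(\gamma_1)} g_1(w)(y/m_1)^{w}\,dw,
\qquad
G_2(m_2/x) = \frac{1}{2\pi i}\int_{(\gamma_2)} g_2(s)(x/m_2)^{s}\,ds,
\]
with $\gamma_1 = (k+1)/2$ and $\gamma_2 = 1+2\epsilon$. Inserting these into the definition of $S_Q(x,y)$ and interchanging the summation with the integrals --- legitimate since $g_1,g_2$ decay faster than any polynomial in vertical strips while $A(m),B(m)$ grow at most polynomially, so that
\[
T(s,w) := \sum_{\ell_1 m_1 = \ell_2 m_2 + hQ \atop h, m_2 \ge 1} \frac{A(m_1)\bar B(m_2)}{m_1^{w}\,m_2^{s}}
\]
converges absolutely for $\Re w = \gamma_1$, $\Re s = \gamma_2$ --- produces
\[
S_Q(x,y) = \left(\frac{1}{2\pi i}\right)^{2}\int_{(\gamma_1)}\int_{(\gamma_2)} g_1(w)\,g_2(s)\,x^{s}y^{w}\,T(s,w)\,ds\,dw .
\]

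The heart of the argument is the identity
\[
T(s,w) = \frac{1}{2\pi i}\int_{(\gamma_3)} \frac{Z_Q(s+w-u,\,u+\tfrac{1-k}{2})}{\Gamma(\tfrac{k-1}{2}+w)}\,\Gamma(\tfrac{k-1}{2}+w-u)\,\Gamma(u)\,\ell_1^{w}\ell_2^{s}\,du, \qquad \gamma_3 = \tfrac{k+1}{2}+\epsilon,
\]
which I would establish by working from the right-hand side. Put $S = s+w-u$ and $W = u+\tfrac{1-k}{2}$, so that $W+\tfrac{k-1}{2}=u$; expanding $Z_Q(S,W)$ by \eqref{Z0Def} and using $(1+\tfrac{hQ}{\ell_2 m_2})^{(k-1)/2} = (\tfrac{\ell_1 m_1}{\ell_2 m_2})^{(k-1)/2}$, the entire $u$-dependence of the integrand reduces to the power $(\tfrac{\ell_2 m_2}{hQ})^{u}$ against $\Gamma(u)\Gamma(\tfrac{k-1}{2}+w-u)/\Gamma(\tfrac{k-1}{2}+w)$. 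Now \eqref{GRid}, applied with $\beta = \tfrac{k-1}{2}+w$ and $t = \tfrac{hQ}{\ell_2 m_2}$ --- which is permissible because these contour choices give $\Re\beta = k > \gamma_3 > 0$ and $\Re(\tfrac{k-1}{2}+w-u) = \tfrac{k-1}{2}-\epsilon > 0$ --- collapses the $u$-integral to $(1+\tfrac{hQ}{\ell_2 m_2})^{-(\frac{k-1}{2}+w)} = (\tfrac{\ell_2 m_2}{\ell_1 m_1})^{\frac{k-1}{2}+w}$, after which a short bookkeeping of the remaining powers of $\ell_1,\ell_2,Q,m_1,m_2$ recovers exactly $T(s,w)$. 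I would also point out that on these contours both arguments of $Z_Q$ have real part $1+\epsilon > 1$, so throughout the computation $Z_Q$ is represented by its absolutely convergent defining series.

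Finally I would substitute this expression for $T(s,w)$ back into the double integral and interchange the order of the three integrations to obtain \eqref{Sxydef}. The single point demanding care --- and the main obstacle --- is the justification of this last interchange: one verifies absolute convergence of the resulting triple integral from Stirling's formula for $\Gamma(u)\Gamma(\tfrac{k-1}{2}+w-u)/\Gamma(\tfrac{k-1}{2}+w)$ (which decays like $e^{-\pi|\Im u|}$ for bounded $\Im w$ and grows at most polynomially in $\Im w$ along the relevant lines), combined with the rapid vertical decay of $g_1,g_2$ and the boundedness of $Z_Q$ in its domain of absolute convergence. Apart from that, the proof is just the beta-integral device of Proposition~\ref{prop:DtoS} carried out one variable higher.
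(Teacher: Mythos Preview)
Your proof is correct and follows essentially the same approach as the paper: both arguments combine Mellin inversion for the cutoffs $G_1,G_2$ with the beta-integral identity \eqref{GRid} (applied with $\beta=\tfrac{k-1}{2}+w$ and $t=hQ/(\ell_2 m_2)$) to pass between the unweighted double sum $\sum A(m_1)\bar B(m_2)m_1^{-w}m_2^{-s}$ and the series \eqref{Z0Def} defining $Z_Q$. The only difference is the order of presentation---the paper first evaluates the $u$-integral of $Z_Q$ to recover the sum and then applies the inverse Mellin transforms, whereas you begin with Mellin inversion to isolate $T(s,w)$ and then identify it with the $u$-integral---but the computations and the justifications of absolute convergence are the same.
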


\begin{proof}
Recall the identity \eqref{GRid}.  Changing the variables in \eqref{Z0Def}, and setting $\beta = (k-1)/2 +w$, we have, for any $\gamma$ satisfying $0 < \gamma <\Re \, \beta$,
\begin{align}
& \frac{1}{2 \pi i} \int_{(\gamma)}Z_Q(s+w-u,u +(1-k)/2)\G(u)\G( (k-1)/2 +w -u)
\ell_1^w\ell_2^sdu \\
&\notag= \sum_{{m_2,h_0 \ge 1}\atop{\ell_1m_1= \ell_2m_2 + h_0Q}} A(m_1)\bar B(m_2)
 \frac{1}{2 \pi i} \int_{(\gamma)}\frac{\G(u)\G(\frac{k-1}{2} +w -u)\ell_1^w\ell_2^s \left( 1+ \frac{h_0Q}{\ell_2 m_2 }\right)^{(k-1)/2}du }{(\ell_2 m_2)^{s+w-u}(h_0Q)^{u}}\\
 &\notag  =\G(\tfrac{k-1}{2} +w)  \mkern -20 mu\sum_{{m_2,h_0 \ge 1}\atop{\ell_1m_1= \ell_2m_2 + h_0Q}}
\frac{A(m_1)\bar B(m_2)\ell_1^w\ell_2^s}{(\ell_2 m_2)^{s+w}} \left( 1+ \frac{h_0Q}{\ell_2 m_2 }\right)^{-w-(k-1)/2} \left( 1+ \frac{h_0Q}{\ell_2 m_2 }\right)^{(k-1)/2} \\
& \notag =
\G(\tfrac{k-1}{2} +w) \mkern -20mu \sum_{{m_2,h_0 \ge 1}\atop{\ell_1m_1= \ell_2m_2 + h_0Q}}
\frac{A(m_1)\bar B(m_2)\ell_1^w\ell_2^s}{(\ell_2 m_2)^{s+w} \left( 1+ \frac{h_0Q}{\ell_2 m_2 }\right)^w}=\G(\tfrac{k-1}{2} +w) \mkern -20mu \sum_{{m_2,h_0 \ge 1}\atop{\ell_1m_1= \ell_2m_2 + h_0Q}}
\frac{A(m_1)\bar B(m_2)}{m_2^sm_1^w}. \notag
\end{align}
The proposition then follows, after setting $\gamma = \gamma_3$ and applying the inverse Mellin transform
$$
\left(\frac{1}{2 \pi i}\right)^2 \iint\limits_{(\gamma_1)(\gamma_2)}g_1(w)g_2(s)x^sy^wdw ds
$$
to both sides.
\end{proof}

Having related $S_Q(x,y)$ to $Z_Q(s+w -u, u+(1-k)/2)$ we will now derive an estimate for $S_Q(x,y)$ from the analytic properties of $Z_Q(s+w -u, u+(1-k)/2)$
and prove Theorem~\ref{thm:sum} from the introduction.

\begin{proof}[Proof of Theorem \ref{thm:sum}]
Changing variables in the preceding Proposition,
\begin{multline}\label{Sxydef2}
S_Q(x,y) = \left(\frac{1}{2 \pi i}\right)^3\iiint\limits_{(1+\epsilon)(1+2\epsilon)(1+\epsilon)}
\frac{Z_Q(s, u) \G(w-u)\Gamma(u+(k-1)/2 )}{\Gamma(w+(k-1)/2)}\\ \times (\ell_2 x)^{s+u+(k-1)/2-w}(\ell_1y)^wg_1(w)g_2(s+u+(k-1)/2-w)du dw ds.
\end{multline}
Referring to \eqref{ZexpansionCusp}, rewrite $S_{\text{\rm cusp}}(s,s')$ as
$$
S_{\text{\rm cusp}}(s,s') = S_{\rm cusp,gen}(s,s') + S_{\text{ \rm cusp,except}}(s,s'),
$$
where

$$
 S_{\text{\rm cusp,gen}}(s,s') := \frac{(4\pi)^k\G(1-s)}{2\G(s+k-1)}  \sum_{\Re \, t_j \ne 0}\frac{L_Q(s',\overline{u_j})\G(s - \frac12 -it_j)\G(s-\frac12 +it_j)\overline{\<V,u_j\>}}{\G(\frac12 -it_j)\G(\frac12 + it_j)},
$$
and 
$$
 S_{\text{\rm cusp,except}}(s,s') := \frac{(4\pi)^k\G(1-s)}{2\G(s+k-1)}  \sum_{\Re \, t_j = 0}\frac{L_Q(s',\overline{u_j})\G(s - \frac12 -it_j)\G(s-\frac12 +it_j)\overline{\<V,u_j\>}}{\G(\frac12 -it_j)\G(\frac12 + it_j)}.
$$
Referring to Proposition \ref{prop:ZQright}, we define  $ Z_{Q,\text{\rm gen}}(s,u) $ by
$$
Z_Q(s,u) = Z_{Q,\text{\rm gen}}(s,u) + Z_{Q,\text{\rm except}}(s,u),
$$
where
$$
 Z_{Q,\text{\rm except}}(s,u) :=  \left(\ell_1\ell_2\right)^{(k-1)/2} S_{\text{\rm cusp,except}}(s,s').
$$
With this notation, we rewrite \eqref{Sxydef2} as 
$$
S_Q(x,y) = S_{Q,\text{\rm gen}}(x,y) +S_{Q,\text{\rm except}}(x,y),  
$$
where
\begin{multline}\label{Sxydef3}
S_{Q,\text{\rm except}}(x,y) = \left(\frac{1}{2 \pi i}\right)^3\iiint\limits_{(1+\epsilon)(1+2\epsilon)(1+\epsilon)}
\frac{ Z_{Q,\text{\rm except}}(s,u)\G(w-u)\Gamma(u+(k-1)/2 )}{\Gamma(w+(k-1)/2)}\\ \times (\ell_2 x)^{s+u+(k-1)/2-w}(\ell_1y)^wg_1(w)g_2(s+u+(k-1)/2-w)du dw ds
\end{multline}
and 
\begin{multline}\label{Sxydef4}
S_{Q,\text{\rm gen}}(x,y) = \left(\frac{1}{2 \pi i}\right)^3\iiint\limits_{(1+\epsilon)(1+2\epsilon)(1+\epsilon)}
\frac{Z_{Q,\text{\rm gen}}(s,u) \G(w-u)\Gamma(u+(k-1)/2 )}{\Gamma(w+(k-1)/2)}\\ \times (\ell_2 x)^{s+u+(k-1)/2-w}(\ell_1y)^wg_1(w)g_2(s+u+(k-1)/2-w)du dw ds.
\end{multline}

To estimate $S_{Q,\text{\rm except}}(x,y)$ we first move the $u$ line of integration  to $\Re \, u = \epsilon -(k-1)/2$, passing over no poles in the process.   We now have $\Re \, s'  = 1/2 +2\epsilon$.  Moving the $w$ line to $\Re \, w = 0$, and referring to \eqref{targetend}, the definition of $Z_{Q,\text{\rm except}}(s,u)$, and \eqref{Sxydef3} we find that
\be\label{except}
S_{Q,\text{\rm except}}(x,y)
\ll 
Q^{\theta-1/2-\epsilon}L^{1+3\epsilon}x^{1+2\epsilon}.
\ee

To estimate $S_{Q,\text{\rm gen}}(x,y)$, we refer to \eqref{Sxydef4} and move the $s$ line of integration to $\Re \, s = \hf-\kt-\epsilon$.
Applying  Proposition \ref{prop:ZQright} and using the above notation we write
\begin{equation}\label{temp13}
S_{Q,\text{\rm gen}}(x,y)=S_Q^{(1)}(x,y)+S_Q^{(2)}(x,y)+S_Q^{(3)}(x,y),
\end{equation}
where
\begin{multline}\label{essq1}
S_Q^{(1)}(x,y):=\left(\frac{1}{2 \pi i}\right)^3\iiint\limits_{(\hf-\kt -\epsilon)(1+2\epsilon)(1+\epsilon)}
\mkern -20mu\frac{( Z_{Q,\text{\rm gen}}(s,u)-\Psi(s,u))\G(w-u)\Gamma(u+(k-1)/2 )}{\Gamma(w+(k-1)/2)}\\ \times (\ell_2 x)^{s+u+(k-1)/2-w}(\ell_1y)^wg_1(w)g_2(s+u+(k-1)/2-w)du dw ds  
\end{multline}
\begin{multline}\label{essq2}
S_Q^{(2)}(x,y):= \left(\frac{1}{2 \pi i}\right)^2 \mkern -10 mu \sum_{\substack{\Re \,t_j \ne0 \\ 0\leq r \leq \kt}} \ \iint\limits_{(1+2\epsilon)(1+\epsilon)}
\frac{c_{r,j}L_Q\left(-r+it_j +u +\frac{k-1}{2}, \overline{u_j }\right)\G(w-u)\Gamma(u+\frac{k-1}{2}) }{\Gamma(w+\frac{k-1}{2})}\\ \times (\ell_2 x)^{-r+it_j+u+\kt-w}(\ell_1y)^wg_1(w)g_2(-r+it_j+u+\tkt-w)du dw
\end{multline}
and, after a slight shift of the $w$ integral,
\begin{multline}\label{essq3}
S_Q^{(3)}(x,y)=\left(\frac{1}{2 \pi i}\right)^3\iiint\limits_{(1+\epsilon)(1+3\epsilon)(1+\epsilon)}
\mkern -20mu\frac{\Psi(s,u)\G(w-u)\Gamma(u+(k-1)/2 )}{\Gamma(w+(k-1)/2)}\\ \times (\ell_2 x)^{s+u+(k-1)/2-w}(\ell_1y)^wg_1(w)g_2(s+u+(k-1)/2-w) \ du dw ds.
\end{multline}

The last integral, $S_Q^{(3)}(x,y)$ is really just a shorthand for the residual contribution due to the continuous part of the spectrum, since $\Psi(s,u)$ is defined piecewise. 
To analyze $S_Q^{(1)}(x,y)$, let $s' = s+u +\kt -1$, so $\Re \, s' = \hf$.
By \eqref{Z2} in Proposition \ref{prop:ZQright} we have
$$
 Z_{Q,\text{\rm gen}}(s,u)- \Psi(s,u) \ll  Q^{\theta - {\Re \, s' +\epsilon}}(1+|s|)^{A''}(1 + |{s'}|)^{1+\epsilon}.
$$
As this has polynomial growth in $\Im \, s$ and $\Im\, u$, and there is decay faster than any polynomial due to the factors $g_1(w)g_2(s+u+(k-1)/2-w)$,
it follows that the double integral over $s$ and $u$ in $S_Q^{(1)}(x,y)$ converges and is bounded above by $Q^{\theta - \hf +\epsilon}$.  The integral over $w$ also converges, and its size is governed by the factor 
\be\label{Zref}
 (\ell_2 x)^{s+u+(k-1)/2-w}(\ell_1y)^w =  (\ell_2 x)^{s+u+(k-1)/2}((\ell_1y)/(\ell_2 x))^w. 
\ee
As $y \gg x$ and $\ell_1\sim \ell_2\sim L$, it is advantageous to move $w$ down past $\Re \, w = \Re \, u = 1+\epsilon$, picking up a pole at $w=u$.
Thus $S_Q^{(1)}(x,y)$ is finally bounded above by
\be\label{temp13.5}
Q^{\theta - \hf +\epsilon}L^{1+\epsilon}y(y/x)^{\epsilon}.
\ee
This is weaker than \eqref{except} above, unless $x=y$. 
\begin{remark}\label{remark:bottle}
The upper bound \eqref{temp13.5}, coming from the pole at $w = u$ is the bottleneck in this proof.    If would be desirable to move the $u$ line of integration as far past $1+ \epsilon$ as possible.   The difficulty is that this can only be done if one possesses an upper bound for $Z_{Q,\text{\rm gen}}(s,u)- \Psi(s,u)$ in the region $\Re \, u < 1$.   This is possible, using the meromorphic continuation of $Z_{Q,\text{\rm gen}}(s,u)$ given in \cite{HLee}.
\end{remark}

For $S_Q^{(2)}(x,y)$, move the $u$ line of integration to $\Re \, u = 1+r -\kt - \Re \, it_j$,   so $\Re \, (-r+it_j +u +\kt) = 1$.   Note that for $r\ge0$, the argument of that gamma function is
$$
\Re \, u +(k-1)/2= \Re \, 1+r -\tkt - \Re \, it_j +(k-1)/2 \ge \thf,
$$
so the first pole of $\Gamma(u+(k-1)/2 )$ is not passed.
The double integral over $u,w$ in  $S_Q^{(2)}(x,y)$ converges, as does the sum over the $t_j$.
Referring to \eqref{targetend}, we see that an upper bound for  $S_Q^{(2)}(x,y)$   is given by 
$$
Q^{\theta - \hf+\epsilon}L^{1+\epsilon}y(y/x)^{r-\kt- \Re \, it_j},
$$
which is an improvement over \eqref{temp13.5} as $r-\kt- \Re \, it_j \le 0$.

We now consider $S_Q^{(3)}$. Changing variables from $u$ to $s'$, with $s' = s+u +\kt-1$, gives us
\begin{multline*}
S_Q^{(3)}(x,y)=\left(\frac{1}{2 \pi i}\right)^3 \mkern -20 mu \iiint\limits_{(1+\epsilon)(1+3\epsilon)(1+\kt+2\epsilon)}
\mkern -20mu\frac{\Psi(s,s'-s-\kt+1)\G(w-s'+s+\kt-1)\Gamma(s'-s+\hf )}{\Gamma(w+(k-1)/2)}\\ \times (\ell_2 x)^{s'-w+\hf}(\ell_1y)^wg_1(w)g_2(s'-w+\thf) \ ds' dw ds.
\end{multline*}
We now move the $s'$ line of integration to $\Re \, s' = \hf+2\epsilon$.	Here $\Re \, s' -s+\hf =\hf+2\epsilon -1-\epsilon +\hf =  \epsilon$, so the pole of $\G(s'-s+\hf)$ where $s'-s+\hf=0$  is not passed over.  However, from \eqref{Psidef} we do get that
\begin{multline}\label{temp15}
S_Q^{(3)}(x,y)=\left(\frac{1}{2 \pi i}\right)^3 \mkern -20 mu \iiint\limits_{(1+\epsilon)(1+3\epsilon)(\frac{1}{2}+2\epsilon)}
\mkern -20mu\frac{\Psi(s,s'-s-\kt+1)\G(w-s'+s+\kt-1)\Gamma(s'-s+\hf )}{\Gamma(w+(k-1)/2)}\\ \times (\ell_2 x)^{s'-w+\hf}(\ell_1y)^wg_1(w)g_2(s'-w+\thf) \ ds' dw ds 
+R(x,y),
\end{multline}
where
\begin{multline*}
R(x,y):= \sum_{\mathfrak{a}}\left(\frac{1}{2 \pi i}\right)^3 \mkern -20 mu \iiint\limits_{(1+\epsilon)(1+3\epsilon)(1)}
\mkern -10mu 
\frac{(4\pi)^k\G(1-s)\G(s + \frac12 -s')\G(s- \frac32 +s')}
	{2Q^{s'}\G(s+k-1)\pi^{\hf -s'}\zeta^*(3-2s')\G( s'-\frac12)^2\G( \frac32  -s')}  
	\\
	\times 
	\lt(K_{\cusp, Q}^+(s')
	\overline{\<V,E_\cusp^*(*,\tfrac 32 -s')\>}
	+K_{\cusp, Q}^-(s')
	\overline{\<V,E_\cusp^*(*,s' -\tfrac 12)\>}\rt) 
	\,
\\ \times \frac{\G(w-s'+s+\kt-1)\Gamma(s'-s+\hf )}{\Gamma(w+(k-1)/2)}(\ell_2 x)^{s'-w+\hf}(\ell_1y)^wg_1(w)g_2(s'-w+\thf) \ ds' dw ds
\end{multline*}
and $K_{\mathfrak{a},Q}^\pm(s')$ is given by \eqref{zetaQexact} and \eqref{bigKdef}. 
By changing variables $s' \to 1+z$ and the order of integration we get
\begin{multline*}
R(x,y)= \sum_{\mathfrak{a}}\left(\frac{1}{2 \pi i}\right)^3 \mkern -20 mu \iiint\limits_{(0)(1+\epsilon)(1+3\epsilon)}
\mkern -10mu 
\frac{(4\pi)^k\pi^{\hf +z}\G(1-s)\G(s - \hf-z)\G(s-\hf +z)}
	{2Q^{1+z}\G(s+k-1)\zeta^*(1-2z)\G(\hf+z)^2\G( \hf-z)}  
	\\
	\times 
	\lt(K_{\cusp, Q}^+(1+z)
	\overline{\<V,E_\cusp^*(*,\tfrac12 -z)\>}
	+K_{\cusp, Q}^-(1+z)
	\overline{\<V,E_\cusp^*(*,\thf+z)\>}\rt) 
	\,
\\ \times\frac{\G(w-2+\kt- z+s)\Gamma(3/2 + z-s )}{\Gamma(w+(k-1)/2)}  (\ell_2 x)^{3/2 + z-w}(\ell_1y)^wg_1(w)g_2(3/2 + z-w) \  dw ds dz. \end{multline*}

Now taking the first integral component of $S_Q^{(3)}(x,y)$ as given in \eqref{temp15}, we collect the different components of $\Psi(s,s'-s-\kt+1)$ to get that
\begin{multline*}
S_Q^{(3)}(x,y) = \sum_{\substack{\mathfrak{a}, \\ 0 \leq r \leq \kt }} \mkern -10mu \left(\frac{1}{2 \pi i}\right)^3 \mkern -20 mu \iiint\limits_{(\hf-r)(1+3\epsilon)(\frac{1}{2}+2\epsilon)} \frac{(-1)^r(4\pi)^k\vol Q^{-s'}\Gamma(1-s)\Gamma(2s+r-1)}{2r!\Gamma(s+k-1)\G(s+r)\G(1-s-r)}\\ 
\times\left(
	\frac{ \zeta_{\cusp, Q} \left(s', -\frac{1}{2}+s+r \right) \overline{\langle V, E_\cusp^*\left(*, s+r \right)\rangle } } 
	{ \zeta^* (2-2s-2r) \zeta^*(2s+2r) }   
+
	\frac{ \zeta_{\cusp, Q} \left(s', \frac{1}{2} -s-r\right)\ol{\langle V,E^*_\cusp(*,1-s-r ) \rangle}}
	{\zeta^*\left(2s+2r \right)\zeta^*(2-2s-2r )} 
  \right) \\
\times  \frac{\G(w-s'+s+\kt-1)\Gamma(s'-s+\hf )}{\Gamma(w+(k-1)/2)}  (\ell_2 x)^{s'-w+\hf}(\ell_1y)^wg_1(w)g_2(3/2+z-w) \ds'\dw \ds \\
+R(x,y).
\end{multline*}
By substituting $s \to \hf-r+z$ and shifting the line of integration for $s'$ to $\Re s' = \hf$ we get
\begin{multline*}
S_Q^{(3)}(x,y) = \sum_{\substack{\mathfrak{a}, \\ 0 \leq r \leq \kt }} \mkern -10mu \left(\frac{1}{2 \pi i}\right)^3   \iiint\limits_{(0)(1+3\epsilon)(\frac{1}{2})} \frac{(-1)^r(4\pi)^k\vol Q^{-s'}\Gamma(\hf-z)\Gamma(2z-r)}{2r!\Gamma(z+k-r-\hf)\G(\hf+z)\G(\hf-z)}\\ 
\times\left(
	\frac{ \zeta_{\cusp, Q} \left(s', z \right) \overline{\langle V, E_\cusp^*\left(*, \hf +z \right)\rangle } } 
	{ \zeta^* (1-2z) \zeta^*(1+2z) }   
+
	\frac{ \zeta_{\cusp, Q} \left(s', -z\right)\ol{\langle V,E^*_\cusp(*,\hf-z ) \rangle}}
	{\zeta^*\left(1+2z \right)\zeta^*(1-2z )} 
  \right) \\
\times  \frac{\G(w+z-s'-r+\tfrac{k-1}{2})\Gamma(s'-r-z)}{\Gamma(w+(k-1)/2)}  (\ell_2 x)^{s'-w+\hf}(\ell_1y)^wg_1(w)g_2(s'-w+\thf) \ ds' dw dz \\
+R(x,y).
\end{multline*}

Our method for bounding each of the integrals above, except for $R(x,y)$, is completely analogous to how we bounded $S_Q^{(2)}(x,y)$.  By shifting the line of integration $w$ left past the pole at $w+z-s'-r+\frac{k-1}{2}=0$, we see that 
our bound for these integrals is also an improvement on  \eqref{temp13.5}. 

The  $z, s$ and $w$ integrals in $R(x,y)$, as in \eqref{temp15}, are all convergent.  By shifting the line of integration for $z$ to $\Re \, z = -\hf-\frac{\epsilon}{2}$ we get a bound that is an improvement on \eqref{temp13.5} and our proof is complete.  
\end{proof}

\section{Application to subconvexity}\label{sect:subcon}
Our amplification argument is modeled on that of Blomer in \cite{B}, which is in turn based on that of \cite{DFI3}.  Recall we have fixed  a positive integer $Q$, with $(Q,N)=1$.  Recall also that we have fixed $N_0,\ell_1,\ell_2 \in\N$,  with $N_0,\ell_1,\ell_2$ square free, $(N_0,\ell_1\ell_2)=1$ and have set $N=N_0\ell_1\ell_2/(\ell_1,\ell_2)$.   In this section we will add the requirement that $\ell_1,\ell_2$  be prime.  Let $\chi$ be a primitive character mod $Q$ then for $\Re s >1$ we have
$$
L(s,f\otimes\chi)=\sum_{m\ge1}{A(m)\chi(m)\over m^s}.
$$
Using the approximate functional equation, we have that
$$
L(\thf,f\otimes\chi)= 
\sum_m {A(m)\chi(m)\over \sqrt m} V\left({m\over Q\sqrt{N_0}}\right)
+
\gw
\sum_m {\bar A(m)\bar\chi(m)\over \sqrt m} V\left({m\over Q \sqrt{N_0 }}\right),
$$
where $\omega$ is the root number.
Let
$$
B_\chi(x)=\sum_m A(m)\chi(m)\ G\left({m\over x}\right),
$$
where $G$ is smooth of compact support in $[1,2]$ and $x$ is a parameter.
By a smooth dyadic partition of unity, and summation by parts, we have
\be\label{Lbnd}
L(\thf,f\otimes\chi)\ll \max_{x \ll Q^{1+\epsilon}} Q^{-\hf}B_\chi(x).
\ee

Set $L= Q^{1/4-\theta/2+\epsilon}$, and for any $x \ll Q$,
consider the amplified sum 
\be\label{Sdef}
S= \sum_{\psi (Q)}\left|\sum_m A(m)\psi(m)G\left({m\over x}\right)\right|^2
\left|\sum_{\ell \sim L, \ell \,\text{prime}, \, (\ell, N_0)=1} \bar\chi(\ell)\psi(\ell)\right|^2.
\ee
Notice that the term with $\psi=\chi$ contributes 
$$
|B_\chi(x)|^2
\left|\sum_{\ell \sim L, \ell \,\text{prime}, \, (\ell, N_0)=1}1\right|^2.
$$

The number of $\ell\sim L$ with $\ell$ prime is asymptotic to
$$
 L/\log(L) \sim \frac{ Q^{1/4-\theta/2+\epsilon}}{\log(Q^{1/4-\theta/2+\epsilon})},
$$
so taking only the contribution to $S$ from $\psi=\chi$ gives
\be\label{Bbnd}
B_\chi(x)\ll S^{\hf}Q^{-1/4+\theta/2}.
\ee
Combining this  with \eqref{Lbnd} we now have
\be\label{Lbnd2}
L(\thf,f\otimes\chi)\ll  \max_{x \ll Q}Q^{-3/4+\theta/2} S^{\hf}.
\ee

If we open the squares in $S$, and bring the sum over $\psi$ inside, we obtain,
\begin{multline}
S= \sum_{m_1} \sum_{m_2} \sum_{\ell_1}\sum_{\ell_2} A(m_1)\overline{A(m_2)} G\lt(\frac {m_1} x\rt) G\lt(\frac {m_2}{x}\rt) \overline{\chi(\ell_1)}\chi(\ell_2)  \sum_\psi \psi(\ell_1m_1) \overline{\psi(\ell_2m_2)}\\
=\phi(Q)  \sum_{\substack{m_1,m_2,\ell_1,\ell_2, (\ell_1m_1,Q)=1 \\\ \ell_1m_1 \equiv \ell_2m_2 \pmod Q} }  A(m_1)\overline{A(m_2)} G\lt(\frac {m_1} x\rt) G\lt(\frac {m_2}{x}\rt) \overline{\chi(\ell_1)}\chi(\ell_2). 
\end{multline}

We can remove the restriction $(\ell_1m_1,Q)=1$ by letting
\begin{align}
S& = \phi(Q)\sum_{\substack{a \text{ mod }Q\\ (a,Q)=1}}  \lt|\sum_m \sum_{\substack{\ell \sim L, \ell \text{ prime},(\ell,N_0)=1\\ m\ell \equiv a \text{ mod } Q}} A(m)G\lt(\frac mx\rt) \overline{\chi(\ell)} \rt|^2 \\
 &\leq \phi(Q)\sum_{a \text{ mod }Q} \lt|\sum_m \sum_{\substack{\ell \sim L, \ell \text{ prime},(\ell,N_0)=1\\ m\ell \equiv a \text{ mod } Q}} A(m)G\lt(\frac mx\rt) \overline{\chi(\ell)} \rt|^2 \notag \\
& \ \ = \phi(Q) \sum_a \sum_{m_1} \sum_{m_2} \sum_{\ell_1}\sum_{\substack{\ell_2\\ m_1\ell_1 \equiv m_2\ell_2 \equiv a  \text{ mod } Q}} A(m_1)\overline{A(m_2)} G\lt(\frac{m_1}{x}\rt)G\lt(\frac{m_2}{x} \rt)  \notag\chi(\ell_1)\overline{\chi(\ell_2)},
\end{align}
which can be expressed as,
\[
S \leq \phi(Q)\sum_{\substack{\ell_1,\ell_2\sim L \text{ primes} \\ (\ell_1\ell_2,N_0)=1}}\chi(\ell_1)\overline{\chi(\ell_2)}\sum_{\substack{m_1,m_2\\ m_1\ell_1 \equiv m_2\ell_2 \text{ mod }Q}}  A(m_1)\overline{A(m_2)}G\lt(\frac{m_1}{x}\rt)G\lt(\frac{m_2}{x}\rt).
\]

This sum, over all $m_1,m_2$ such that $\ell_1m_1\equiv \ell_2 m_2\pmod Q$,  can be broken into three pieces, two of which are treatable by our Proposition~\ref{prop:notprime}.
\be\label{Sfinal}
S \ll \phi(Q)
\sum_{\ell_1} \sum_{\ell_2} \chi(\ell_1)\overline{\chi(\ell_2)} (S_1+S_2+S_3),
\ee
where
$$
S_1 = \sum_{\substack{m_2\ge 1 \\ \ell_1m_1 = \ell_2m_2}}  A(m_1)\bar A(m_2) 
G\left({m_1\over x}\right) G\left({m_2\over x}\right), \ \ 
$$
$$
S_2 =\sum_{\substack{m_2,h_0\ge 1 \\ \ell_1m_1 = \ell_2m_2 +h_0Q}}  A(m_1)\bar A(m_2) 
G\left({m_1\over x}\right) G\left({m_2\over x}\right),
$$
and
$$
S_3 =\sum_{\substack{m_2,h_0\ge 1 \\ \ell_2m_2 = \ell_1m_1 +h_0Q}}  A(m_1)\bar A(m_2) 
G\left({m_1\over x}\right) G\left({m_2\over x}\right).
$$
It is easily verified that
$$
\phi(Q)\sum_{\ell_1} \sum_{\ell_2} \bar\chi(\ell_1)\chi(\ell_2) S_1\ll LQ^{2 +\epsilon}.
$$   
The sums $S_2,S_3$ are bounded using Theorem ~\ref{thm:sum}, with $x=y \ll Q^{1+\epsilon}$.  This gives us
$$
S_2,S_3 \ll Q^{\theta-\hf}x^{1+\epsilon}L^{1+\epsilon} \ll Q^{\hf+\theta+\epsilon}L^{1+\epsilon} .
$$
Substituting, we have for their contribution:
$$
\phi(Q)
\sum_{\ell_1} \sum_{\ell_2} \bar\chi(\ell_1)\chi(\ell_2) (S_2 + S_3) \ll QL^2Q^{\hf+\theta+\epsilon}L^{1+\epsilon}
\ll Q^{3/2+\theta+\epsilon}L^{3+\epsilon} .
$$
Referring to \eqref{Sfinal} we have, 
$$
S \ll \phi(Q)
\sum_{\ell_1} \sum_{\ell_2} \bar\chi(\ell_1)\chi(\ell_2) (S_1+S_2+S_3)
\ll Q^{2+\epsilon} L+ Q^{3/2+\theta+\epsilon}L^{3+\epsilon}
\ll Q^{9/4-\theta/2+\epsilon}.
$$
Finally, referring to \eqref{Lbnd2},
$$
L(\thf,f\otimes\chi)\ll  \max_{x \ll Q^{1+\epsilon}}Q^{-3/4+\theta/2} S^{\hf} \ll Q^{3/8+\theta/4+\epsilon}.
$$
This completes the proof of Theorem~\ref{thm:sub}. 

\ \ 

{\bf{\hfill Appendix: Estimates of triple products of automorphic functions II \hfill}}

{\hfill{Andre Reznikov}\hfill}

\begin{abstract}
We prove a sharp bound for the average value of  the triple product of modular functions for the Hecke subgroup $\G_0(N)$.  Our result is an extension of the main result in \cite{BR4} to a {\it fixed} cuspidal representation of the adele group $\PGL(\ba)$.
\end{abstract}
\let\thefootnote\relax\footnote{Partially supported by  the ERC grant 291612, by the ISF   grant 533/14, and by the Minerva  Center at ENI}
\maketitle

\section{Appendix}
\label{intro}
\subsection{Maass forms} We recall the setup of \cite{BR4} which should be read in conjunction with this appendix.
Let $Y$ be a compact Riemann surface  with a Riemannian metric of
constant curvature $-1$ and the associated volume element $dv$.
The corresponding  Laplace-Beltrami operator  is non-negative and
has purely discrete spectrum on the space $L^2(Y,dv)$ of functions
on $Y$. We will denote by $0=\mu_0< \mu_1 \leq \mu_2 \leq ...$ its
eigenvalues  and by $\phi_i$ the corresponding
eigenfunctions (normalized to have $L^2$ norm one). In the theory
of automorphic forms the functions $\phi_i$ are called
automorphic functions or Maass forms (after H. Maass,
\cite{M}).  We write $\mu_i=(1-\lm_i^2)/2$  and $\phi_i=\phi_{\lm_i}$ as  is customary in representation theory of the group
$\PGLR$.

For any three Maass forms
$\phi_i,\ \phi_j, \ \phi_k$ we define the following
triple product or triple period:
\begin{eqnarray}\label{cijk}
c_{ijk}=\int_Y\phi_i\phi_j\phi_kdv \ .
\end{eqnarray}

One would like to bound the coefficient $c_{ijk}$ as a function of
eigenvalues  $\mu_i,\ \mu_j,\ \mu_k$. In particular, we would like
to find bounds for these coefficients  when one or more of these
indices tend to infinity. The study of these triple coefficients goes back  to
pioneering works of Rankin and Selberg (see \cite{Ra}, \cite{Se}), and reappeared in celebrated works of Waldspurger \cite{W} and Jacquet \cite{J} (also see  \cite{HK}, \cite{W}, \cite{Ich}). Recently, an interest in analytic questions related to triple products was initiated in the groundbreaking paper of Sarnak  \cite{Sa1} (see also  \cite{Gd2} for the first general result on exponential decay). This was motivated by the widespread use of triple products in applications  (e.g., \cite{Sa2}).

In  \cite{BR4} we considered the following particular case of the triple period problem. Namely, we fix two
Maass forms $\phi = \phi_\tau,\ \phi'= \phi_{\tau'}$ as above and
consider coefficients defined by the triple period:
\begin{eqnarray}\label{ci}
c_i=\int_Y\phi\phi'\phi_idv
\end{eqnarray}
as  $\{\phi_i = \phi_{\lm_i}\}$ run over the orthonormal basis of Maass forms.
We note first that one  has exponential decay for the coefficients $c_i$
in the parameter $|\lm_i|$ as $i$ goes to $\8$. For that reason, one renormalizes coefficients $|c_i|^2$ by an appropriate ratio of Gamma functions dictated by the Watson formula \cite{W} (see also Appendix in \cite{BR4} where these factors were computed using another point of view). Taking into account the asymptotic behavior of these factors, we introduced normalized coefficients
\begin{eqnarray}\label{ai-ci}
|a_i|^2=|\lm_i|^2\exp\left(\frac{\pi}{2}|\lm_i|\right)\cdot |c_i|^2\ .
\end{eqnarray} Under such a normalization, we showed that
\begin{eqnarray}\label{mean-value-ai}
\sum_{ |\lm_i|\leq T}|a_i|^2\leq A\cdot T^2\
\end{eqnarray} for some explicit constant $A=A(\G,\phi,\phi')$.
According to the Weyl law, there are approximately $cT^2$ terms in the above sum, and hence the bound \eqref{mean-value-ai} is consistent with the Lindel\"{o}f  bound on average (in fact it is not difficult to show that the bound \eqref{mean-value-ai} essentially is  sharp, see \cite{Re1}).

There are various natural questions concerning the bound \eqref{mean-value-ai} which were not discussed in \cite{BR4}. These are mostly related to the dependence of the constant $A$ on various parameters (i.e., dependence on the subgroup $\G$ and forms $\phi$, $\phi'$), and also to the fact that we restricted the discussion to Maass forms, leaving aside the case of holomorphic forms. Another restriction of  the treatment we presented was  the fact that we used in an essential way the compactness of $Y$ (although the non-compact case was treated by us differently in \cite{BR1}).  All these issues turn out to be important in applications. In this appendix, we answer some of these questions for Hecke congruence subgroups. The methods we employ are elaborations on the method of \cite{BR1}.
\subsubsection{Hecke subgroups} For an integer $N\geq 1$, we consider the Hecke subgroup $\G_0(N)\subset\PSLZ$ of the modular group. We normalize the scalar product on the quotient Riemann surface $Y_{N}=\G_0(N)\setminus \uH$ by $\langle f,g\rangle_{Y_N}=
{\rm vol}_\uH(Y_N)\inv\int_{Y_{N}}f(z)\bar g(z)d\mu_\uH$  where $d\mu_\uH$ is the standard volume element on the upper half plane $\uH$ (i.e., we normalize the volume element $dv_{Y_{N}}$ on $Y_N$ to have the total volume $1$). Let $\phi$ be a (primitive) Hecke-Maass form for the group $\G_0(N_0)$ for some fixed level $N_0\geq 1$. We assume that $\phi$ is normalized by the $L^2$-norm $||\phi||_{L^2(Y_{N_0})}=1$.  For an integer $\ell> 1$, denote by $^\ell\phi(z)=\phi(\ell z)$  the corresponding {\it old form} for the Hecke subgroup $\G_0(\ell N_0)$. The corresponding function $^\ell\phi$  also turns out  to be  $L^2$-normalized on $Y_{\ell N_0}$ with respect to the normalization of measures we choose. This follows easily from the Rankin-Selberg method. For two such Maass forms $\phi$ and $\phi'$, we define  triple products by
\begin{eqnarray}\label{ci-ell}
c_i(\ell)=\int_{Y_{\ell N_0}}{^\ell\phi}\ {^\ell\phi'}\ \phi_i\ dv_{Y_{\ell N_0}}
\end{eqnarray}
as  $\{\phi_i = \phi_{\lm_i}\}$ run over the orthonormal basis of Maass forms on $Y_{\ell N_0}$. We have the corresponding  normalized coefficients
\begin{eqnarray}\label{ai-ell}
|a_i(\ell)|^2=|\lm_i|^2\exp\left(\frac{\pi}{2}|\lm_i|\right)|c_i(\ell)|^2\ .
\end{eqnarray}

\begin{thma}{THM1}
There exists an effectively computable
constant $A$  such that the following bound holds for all $T \geq 1$,
\begin{eqnarray}\label{mean-value}
\sum_{ |\lm_i|\leq T}|a_i(\ell)|^2\leq A\cdot T^2\ ,
\end{eqnarray} where the summation is over an orthonormal basis of Maass forms for the group $\G_0(\ell N_0)$.
The constant $A$ depends on $N_0$, $\phi$ and $\phi'$, but not on $\ell$.
\end{thma}

This could be viewed as a Lindel\"{o}f on the average type bound in two parameters $\ell$ and $T$. Namely, there are about $[\G_0(\ell N_0):\G(1)]\cdot  T^2$ terms in the sum as predicted by the Weyl-Selberg law for the surface $Y_{\ell N_0}$ (assuming the uniformity of the corresponding remainder term). Hence  the resulting bound \eqref{mean-value} is consistent with the expected Lindel\"{o}f type bound $|a_i(\ell)|\ll (\ell |\lm_i|)^\eps$ under our normalization of measures   ${\rm vol}(Y_{\ell N_0})=1$. 

There is also an analogous contribution from the Eisenstein series. One defines normalized triple coefficients $a_{s,\kappa}(\ell)$ arising from triple products $\langle {^\ell\phi}\ {^\ell\phi'}, E_{\kappa}(s)\rangle_{Y_{\ell N_0}}$ where $E_{\kappa}(s)$ is the Eisenstein series associated to a cusp $\kappa$ of $\G_0(\ell N_0)$. We prove the following bound for the full spectral expansion
\begin{eqnarray}\label{mean-value-Eisen}
\sum_{ |\lm_i|\leq T}|a_i(\ell)|^2+\haf\sum_{\{\kappa\}}\int_{-T}^{T}|a_{it,\kappa}(\ell)|^2dt \leq A\cdot T^2\ .
\end{eqnarray}

\subsection {The method}
The proof we presented in \cite{BR4} was based on the uniqueness of the triple
product in representation theory of the group $\PGLR$.
We review quickly the general ideas behind our  proof. It is
based on ideas from representation theory (see the seminal book \cite{GGPS}, and also \cite{Bu}, \cite{L}). Namely, we use the fact
that every automorphic form $\phi$ generates an automorphic
representation of the group $G = \PGLR$; this means that starting
from $\phi$ we produce a smooth irreducible representation of the
group $G$ in a space $V$ and its realization $\nu : V \to
C^{\8}(X)$ in the space of smooth functions on the automorphic
space $X = \G \backslash G$ endowed with the invariant measure of the total mass one. We denote by $\phi_v(x)=\nu(v)(x)$ the corresponding to $v\in V$ automorphic function. The Maass form corresponds to a unit $K$-invariant vector $e_0\in V$, where $K=PSO (2)$ is a maximal compact connected subgroup of $G$. In our case, we have the family of spaces $X_\ell=\G_0(\ell N_0)\setminus G$ (we denote by $X_0=\G_0( N_0)\setminus G$), and the corresponding family of  isometries $\nu_\ell: V\to C^{\8}(X_\ell)$ of the {\it same} abstract representation of $G$. We note that forms $\phi(\ell z)$  belong to the same {\it adelic} representation of $\PGL(\ba)$ generated  the new  Hecke-Maass form $\phi$.

 The triple product  $c_i=\int_Y\phi\phi'\phi_idv$ extends to a
$G$-equivariant trilinear form on the corresponding automorphic
representations  $l^{aut}:V\otimes V'\otimes V_i\to\bc$, where $V
= V_\tau, V' = V_{\tau'}, V_i = V_{\lambda_i}$ .

Then we use a general result from representation theory that such a
$G$-equivariant trilinear form is unique up to a scalar, i.e., that  $\dim\Hom_G(V\otimes V'\otimes V_i,\bc)\leq 1$ (see \cite{O},\cite{Mo}, \cite{Lo} and \cite{Pr} for $p$-adic $GL(2)$).
This
implies that the automorphic form $l^{aut}$ is proportional to an
explicit ``model" form $l^{mod}$ which we describe using explicit
realizations of representations of the group $G$; it is important
that this last form carries no arithmetic information.

Thus we can write $l^{aut} = a_i \cdot l_i^{mod}$ for some constant
$a_i$ and hence $c_i= l_i^{aut}(e_\tau\otimes e_{\tau'} \otimes
e_{\lambda_i}) = a_i \cdot l_i^{mod}(e_\tau \otimes e_{\tau'}
\otimes e_{\lambda_i})$, where $e_\tau,\ e_{\tau'},\
e_{\lambda_i}$ are $K$-invariant unit vectors in the automorphic
representations $V, V',\ V_i$ corresponding to the automorphic
forms $\phi$, $\phi'$ and $\phi_i$.

 It turns out that the proportionality coefficient
 $ a_i$ in the last formula carries
 important ``automorphic" information while the second factor
 carries no arithmetic information and can be computed
 in terms of Euler $\G$-functions using explicit realizations of
 representations $V_\tau$, $V_{\tau'}$ and $V_{\lambda_i}$ (see Appendix to \cite{BR4}). This second factor is
 responsible for
 the exponential decay, while the first factor $a_i$ has a
 polynomial behavior in the parameter $\lm_i$.

   In order to bound the quantities $a_i$, we use the fact that they
   appear as coefficients in the spectral
   decomposition of the diagonal Hermitian form $H_{\Dl}$ given by $$H_\Dl(v\otimes w)=\int_X|\phi_v(x)\phi_w(x)|^2dx$$
   on the space $E = V_\tau \otimes V_{\tau'}$.
   This gives an inequality $\sum |a_i|^2 H_i \leq H_\Dl$
   where $H_i$ is a Hermitian
form on $E$ induced by the model trilinear form $l_i^{mod}: V
\otimes V' \otimes V_i \to \bc$ as above.

Using the geometric properties of the diagonal form and simple explicit estimates of forms $H_i$,
we establish the mean-value bound for the coefficients $|a_i|^2$.
Here is where one obtains the dependence of the constant $A$ in \eqref{mean-value-ai} on parameters involved. In the method of \cite{BR4}, we used $L^2$ theory by averaging the form $H_\Dl$ and comparing the resulting form with the $L^2$-form. The coefficient $A$ that one obtains in such an argument depends in particular on the injectivity radius of $X$. While in certain cases it gives an optimal result, it obviously has two drawbacks. One is related to the possible non-compactness of $X$ since in the cusp the injectivity radius tends to zero.  Another problem arises when one considers a sequence of subgroups with co-volume going to infinity along the sequence. In that case, the bound which the method of \cite{BR4} provides for the constant (e.g., $A(\G_0(p))\leq {\rm vol}(Y_p)\approx p$, see \cite{K})  is too weak for many applications. Both of these problems arise in the classical setup of Hecke subgroups $\G_0(N)$. Here we obtain an optimal bound for {\it old forms}. We do not know how to obtain similar results for new forms. We will discuss improvements over a trivial bound for new forms elsewhere. Theorem A (see Section \ref{THM1}) could be viewed as the exact analog of the result in \cite{BR4} for a {\it fixed} adelic representation.

\subsection{Proof of Theorem A} We have a family of objects  $(X_\ell, \nu_\ell, H_\Dl^{\ell}, a_i(\ell))$ parameterized by the level $\ell$. However the {\it model } Hermitian form $H_i$ is the same since the abstract representation $V$ of $\PGLR$ does not change.
The proof of the bound \eqref{mean-value-ai} given in \cite{BR4} was based on the spectral decomposition $\sum |a_i(\ell)|^2 H_i \leq H^\ell_\Dl$ of the diagonal form and on the construction of the test vector $u=u_T\in E$ such that $H^\ell_\Dl(u)\leq aT^2$ and $H_i(u)\geq 1$ for $|\lm_i|\leq T$. We now construct the test vector {\it independently} of $\ell$. The dependence on $\ell$ is hidden in the automorphic realization $\nu_\ell(u)$ as a function on $X_\ell$.
\subsubsection{Construction of vector $u$}\label{consttest} We slightly change the construction of the test vector $u_T$ given in Section 5.3.2 of \cite{BR4}.
Let us identify the space $E = V \otimes V'$ with a  subspace of smooth functions $C^\8(\br\times\br)$. Choose a smooth non-negative function $\al\in C^\8(\br)$ with the support ${\rm supp}(\al)\subset [-\iota,\iota]$ for $\iota>0$ to be specified later,  and $\int_\br\al(x)dx=1$. Let $||\al||^2_{L^2(\br)}=c^2$ for some $c>0$. Consider the diagonal element $a_T={\rm diag}(T^{-\haf},T^\haf)\in G$. We define  vectors
\begin{eqnarray}\label{test-vect}
v_T=T^{\haf+\tau}\cdot\pi_\tau(a_T)\al\ \ {\rm and}\ \  v'_T=T^{\haf+\tau'}\cdot\pi_{\tau'}(\left(
                                          \begin{smallmatrix}
1 & 1 \\
                                   & 1 \\
\end{smallmatrix}
                                        \right)a_T) \al\ .
                                        \end{eqnarray}
We set our test vector to $u_T(x,y)= v_T(x)\otimes v'_T(y)$.  Recall that the action is given by $\pi_\tau({\rm diag}(a\inv,a) v(x)=|a|^{1-\tau}v(a^2x)$, and $\pi_{\tau}(\left(
                                          \begin{smallmatrix}
1 & 1 \\
                                   & 1 \\
\end{smallmatrix}
                                        \right)) v(x)=v(x-1)$.  By an easy calculation we have $||u_T||^2_E=c^2T^2$. We note that geometrically the vector $u_T$ is a small non-negative bump function around the point $(0,1)\in \br^2$, with the support in the box of the size $\iota T\inv$, and satisfies $\int_{\br^2}u_T(x,y)dxdy=1$.
A computation identical to the one performed in Section 5.3.4 of \cite{BR4} gives then for a small enough (but fixed) $\iota>0$, $H_i(u_T)\geq \beta$   with some explicit $\beta>0$  independent of $|\lm_i|\leq T$ (in fact for $|\tau|,\ |\tau'|\leq T$, it does not depend on these either). We remark that the only difference with the construction of the test vector given in \cite{BR4} is that here we constructed $u_T$ with the help of the action of $G$ on $V$ (while in \cite{BR4} we constructed essentially the same vector explicitly in the model). This will play a crucial role in our estimate of the corresponding automorphic function.

We now need to estimate $H^\ell_\Dl(u_T)$.  We claim that $H^\ell_{\Dl}(u_T)\leq B T^2$ for some explicit constant $B$ independent of $\ell$. Since we have
$$H^\ell_{\Dl}(u_T)=\int_{X_\ell}|{^\ell\phi_{v_T}}{^\ell\phi_{v'_T}}|^2dx\leq \haf||{^\ell\phi_{v_T}^2}||^2_{L^2(X_\ell)} +\haf||{^\ell\phi_{v'_T}^2}||^2 _{L^2(X_\ell)}\ ,$$ it is enough to show that $||{^\ell\phi_{v_T}^2}||^2_{L^2(X_\ell)}\leq \beta' T^2$.  This would finish the proof of Theorem A (Section \ref{THM1}) following the argument  in Section 4.7 \cite{BR4}.  Since $\langle {^\ell\phi_{v_T}^2}, 1\rangle_{X_\ell}=||{^\ell\phi_{v_T}}||^2=c T$, it is easy to see that such a bound is sharp. We claim that
\begin{eqnarray}\label{supnorm}
\sup_{x\in X_\ell}|{^\ell\phi_{v_T}}(x)|\leq \beta'' T^\haf\ ,
\end{eqnarray} for some $\beta''$ independent of $\ell$. Here  ${^\ell\phi_{v_T}}(x)=\nu_\ell(v_T)(x)$. Note that the bound provided by the Sobolev theorem \cite{BR2} would give a much weaker in $\ell$  bound of the order of $(\ell T)^\haf$ (see \cite{K} for the corresponding discussion).

\subsubsection{Supremum norm}\label{supnorm-sect} Recall that we started with an $L^2$-normalized Hecke-Maass form $\phi$ on $\G_0(N_0)$, and the corresponding isometry of $\nu:V\to C^\8(X_0)$ of the principal series representation $V\simeq V_\tau$. We then constructed another isometry $\nu^\ell:V\to C^\8(X_{\ell})$ by using the map
\begin{eqnarray}\label{nu-ell-nu}
^\ell\phi_v(x)=\nu^\ell(v)(x)=\nu(v)\left(\left(
                                          \begin{smallmatrix}
\ell^\haf &  \\
                                   &\ \ \ell^{-\haf} \\
\end{smallmatrix}
                                        \right)x\right)=\phi_v\left(\left(
                                          \begin{smallmatrix}
\ell^\haf &  \\
                                   &\ \ \ell^{-\haf} \\
\end{smallmatrix}
                                        \right)x\right)\ ,
\end{eqnarray}        for any $v\in V$.    This relation might be viewed as the relation between  functions on $G$ invariant on the left for an appropriate $\G$ (e.g., for  $\G_0(N_0)$ and for $\G_0(\ell N_0)$). In particular, we see that the supremum of the function $^\ell\phi_v$ on $X_\ell$ and that of the function $\phi_v$ on $X_0$  are equal for the {\it same} vector $v\in V$. Hence it is enough to show that $\sup_{x\in X_0}|{\phi_{v_T}}(x)|\leq \beta'' T^\haf$. In fact, this is obvious since  $v_T=T^{\haf+\tau}\pi(a_T)\al$ is given by the (scaled) action of $G$ on a {\it fixed} vector.  We have
\begin{eqnarray*}\label{sup-action}
\sup_{X_0}|{\phi_{v_T}(x)}|=T^{\haf}\sup_{X_0}|{\phi_{\pi(a_T)\al}(x)}|=
T^{\haf}\sup_{X_0}|{\phi_{\al}(x\cdot a_T)}|=
T^{\haf}\sup_{X_0}|{\phi_{\al}(x)}|= \beta'' T^\haf\ ,
\end{eqnarray*}
since $\al$ is a {\it fixed} vector in a  {\it fixed} automorphic cuspidal  representation $(\nu, V)$, and the action does not change the supremum norm. \qed

\subsubsection*{Remark}   It is easy to see that the condition that forms $^\ell\phi$ and $^\ell\phi'$ are  of the same level is not essential for the proof, as well as that these are the Hecke forms. In particular, under our  normalization of the measure on $X_\ell$, we see that for a vector $v\in V$, $L^2$-norms $||{^\ell\phi_v}||_{X_\ell}=||{^\ell\phi_v}||_{X_{\ell'}}$ are equal  if $\ell|\ell'$ (here we view the function ${^\ell\phi_v}$ as both $\G_0(\ell N_0)$-invariant function and as
$\G_0(\ell' N_0)$-invariant function). Hence for two Maass forms  $\phi$ and $\phi'$ on $\G_0(N_0)$, we obtain the bound:
\begin{eqnarray}\label{mean-value1}
\sum_{ |\lm_i|\leq T}|\langle {^{\ell_1}}\phi\cdot{^{\ell_2}}\phi',\phi_i\rangle_{Y_{\ell' N_0}}|^2\cdot|\lm_i|^2e^{\frac{\pi}{2}|\lm_i|}\leq A\cdot T^2\ ,
\end{eqnarray} where the summation is over an orthonormal basis of Maass forms for the subgroup $\G_0(\ell' N_0)$ with $\ell'={\rm lcm}(\ell_1,\ell_2)$. We also have the analogous bound for the full spectral expansion \eqref{mean-value-Eisen} including the Eisenstein series contribution. The constant $A$ in \eqref{mean-value1} depends on $N_0$ and on eigenvalues of forms $\phi$ and $\phi'$.

\subsection{Holomorphic forms} The approach given above is applicable to holomorphic forms as well. In principle, there are no serious changes needed as compared to the Maass forms case. The main difficulty is that   we have to fill in the gap left in  \cite{BR4} concerning the model trilinear functional for the discrete series representations of $\PGLR$.

Let $\phi^k$, $\phi'^k$ be (primitive) holomorphic forms of weight $k$ for the subgroup $\G_0(N_0)$. We assume these are $L^2$-normalized. For $\ell>1$, we consider (old) forms ${^\ell\phi^k}(z)=\phi^k(\ell z)$ and ${^\ell\phi'^k=\phi'^k(\ell z)}$ on $\G_0(\ell N_0)$. Under our normalization of measures for $Y_{\ell N_0}$, we have  $||{^\ell\phi^k}||_{Y_{\ell N_0}}= ||{^\ell\phi'^k}||_{Y_{\ell N_0}}=\ell^{-\frac{k}{2}}$. This follows from the Rankin-Selberg method. Hence it would have been  more natural to consider normalized forms $\phi^k|_{[a_\ell]_k}=\ell^{\frac{k}{2}}\cdot{^\ell\phi^k}$.

For a (norm one) Maass form $\phi_i$ on $\G_0(\ell N_0)$, we define the corresponding triple coefficient by
\begin{eqnarray}\label{ci-ell-hol}
c^k_i(\ell)=\int_{Y_{\ell N_0}}{^\ell\phi^k}\ \overline{^\ell\phi'^k}\ \phi_i\ y^k\ dv_{Y_{\ell N_0}}\ .
\end{eqnarray} As with Maass forms, we renormalize these coefficients in accordance with the Watson formula by introducing normalized triple product coefficients
\begin{eqnarray}\label{ai-ell-hol}
|a^k_i(\ell)|^2=|\lm_i|^{2-2k}\exp\left(\frac{\pi}{2}|\lm_i|\right)|c^k_i(\ell)|^2\ .
\end{eqnarray}

\begin{thmb}{THM2}
There exists an effectively computable
constant $A$  such that the following bound holds for all $T \geq 1$,
\begin{eqnarray}\label{mean-value-hol}
\sum_{\haf T\leq |\lm_i|\leq T}|a^k_i(\ell)|^2\leq A\cdot {\ell^{-2k}}T^{2}\ ,
\end{eqnarray} where the summation is over an orthonormal basis of Maass forms for the group $\G_0(\ell N_0)$.
The constant $A$ depends on $N_0$, $\phi$ and $\phi'$, but not on $\ell$.
\end{thmb}

\subsubsection*{Remark} The proof we give applies to a slightly more general setup of forms  of different  level $\ell_1$ and $\ell_2$.   Namely, we have
\begin{eqnarray}\label{mean-value-hol-dif-level}
\sum_{ \haf T\leq |\lm_i|\leq T}|\langle {^{\ell_1}\phi^k}\ \overline{{^{\ell_2}}\phi'^k},\phi_i\rangle_{Y_{\ell' N_0}}|^2\cdot(\ell_1\ell_2)^{k}|\lm_i|^{2-2k}e^{\haf\pi|\lm_i|}\leq A\cdot T^2\ ,
\end{eqnarray} for two fixed forms $\phi^k$ and $\phi'^k$ on $\G_0(N_0)$ and for $\ell'={\rm lcm}(\ell_1,\ell_2)$. Breaking the interval $[1,T]$ into dyadic parts, we obtain for the full range (with the Eisenstein series contribution included),
\begin{eqnarray}\label{mean-value-hol-dif-level2}
\sum_{ |\lm_i|\leq T}|\langle {^{\ell_1}\phi^k}\ \overline{{^{\ell_2}}\phi'^k},\phi_i\rangle_{Y_{\ell' N_0}}|^2\cdot(\ell_1\ell_2)^{k}|\lm_i|^{2-2k}e^{\haf\pi|\lm_i|}+\\
\sum_{\{\kappa\}}\int_{-T}^{T}\langle {^{\ell_1}\phi^k}\ \overline{{^{\ell_2}}\phi'^k},E_\kappa(it)\rangle_{Y_{\ell' N_0}}|^2\cdot(\ell_1\ell_2)^{k}|t|^{2-2k}e^{\haf\pi|t|}dt\leq A\cdot T^2\ln(T)\ .\nonumber
\end{eqnarray}  This is slightly weaker than \eqref{mean-value} for Maass forms.

\subsection{Proof of Theorem B} As we seen in the case of Maass forms, the proof is based on the explicit form of the trilinear functional, its value on special vectors leading to the normalization \eqref{ai-ell-hol}, and the construction of test vectors for which we can estimate supremum norm effectively. We explain below changes and additions needed in order to carry out this scheme for discrete series.

\subsubsection{Discrete series} Let $k\geq 2$ be an even integer, and $(D_k,\pi_{D_k})$ be the corresponding discrete series representation of $\PGLR$. In particular, for $m\in 2\bz$, the space of  $K$-types of weight $m$ is non-zero (and in this case is one-dimensional) if and  only if $|m|\geq k$. This defines $\pi_k$ uniquely. Under the restriction to $\PSLR$, the representation $\pi_k$ splits into two representations $(D^\pm_k,\pi^\pm_{D_k})$ of ``holomorphic" and ``anti-holomorphic" discrete series  of $\PSLR$, and  the element $\dl={\rm diag}(1,-1)$ interchanges them.

We consider two realizations of discrete series as subrepresentations and as quotients of induced representations. Consider the space  $\CH_{k-2}$  of smooth even homogeneous functions on $\br^2\setminus 0$ of  homogeneous degree $k-2$ (i.e., $f(tx)=t^{k-2}f(x)$ for any $t\in \br^\times$ and $0\not=x\in\br^2$). We have the natural action of $\GLR$ given by ${\pi}_{k-2}(g)f(x)=f(g\inv x)\cdot \det(g)^{(k-2)/2}$, which is trivial on the center and hence defines a representation $(\CH_{k-2}, \pi_{k-2})$ of $\PGLR$. There exists a unique non-trivial invariant subspace $W_{k-2}\subset \CH_{k-2}$. The space $W_{k-2}$ is finite-dimensional, $\dim W_{k-2}=k-1$, and is generated by monomials $x_1^{m}x_2^{n}$, $m+n=k-2$. The quotient space $\CH_{k-2}/ W_{k-2}$ is isomorphic to the space of smooth vectors of the discrete series representation $\pi_k$.

We also consider the dual situation. Let  $\CH_{-k}$ be the space of smooth even homogeneous functions on $\br^2\setminus 0$ of  homogeneous degree $-k$. There is a natural $\PGLR$-invariant pairing $\langle\ ,\ \rangle: \CH_{k-2}\otimes \CH_{-k}\to\bc$ given by the integration over $S^1\subset \br^2\setminus 0$. Hence $\CH_{-k}$ is the smooth dual of $\CH_{k-2}$, and vice versa. There exists a unique  non-trivial invariant subspace $D^*_k\subset \CH_{-k}$. The quotient $\CH_{-k}/D^*_k$ is isomorphic to the finite-dimensional representation $W_{k-2}$.

Of course  $D^*_k$ is isomorphic to $D_k$, but we will distinguish between two realizations of the same abstract representation  as a subrepresentation $D^*_k\subset \CH_{-k}$ and as a quotient $\CH_{k-2}\to D_k$. We denote corresponding maps by $i_k: D^*_k\subset \CH_{-k}$ and $q_k: \CH_{k-2}\to D_k$.

\subsubsection{Trilinear invariant functionals}
Let $(V_{\lm,\eps},\pi_{\lm,\eps})$ be a unitary representation of the principal series of $\PGLR$. These are parameterized by $\lm\in i\br$ and by $\eps=0, 1$ describing the action of the element $\dl$ (see \cite{Bu}). The space $\Hom_G(D_k\otimes D^*_k, V_{\lm,\eps})$ is one-dimensional. We will work with the space of invariant trilinear functionals $\Hom_G(D_k\otimes D^*_k\otimes V_{-\lm,\eps},\bc)$ instead. We construct  below a non-zero functional  $l^{ind}_{k,\lm,\eps}\in \Hom_G(\CH_{k-2}\otimes\CH_{-k}\otimes V_{-\lm,\eps},\bc)$ for induced representations (in fact, this space is also one-dimensional) by means of (analytic continuation of) an explicit kernel.  We use it to define a non-zero functional   $l^{mod}_{k,\lm,\eps}\in\Hom_G(D_k\otimes D^*_k\otimes V_{-\lm,\eps},\bc)$. What is more important, we will use $ l^{ind}_{k,\tau,\eps}$ in order to carry out our computations in a  way similar to the principal series.

Let $ l^{ind}_{k,\lm,\eps}\in \Hom_G(\CH_{k-2}\otimes\CH_{-k}\otimes V_{-\lm,\eps},\bc)$ be a non-zero  invariant functional.  Such a functional induces the corresponding functional on $\CH_{k-2}\otimes D^*_k\otimes V_{-\lm,\eps}$ since $D^*_k\subset \CH_{-k}$. Moreover, any such functional vanishes on the subspace $W_{k-2}\otimes D^*_k\otimes V_{-\lm,\eps}$ since there are no non-zero maps between $W_{k-2}\otimes D^*_k$ and $V_{\lm,\eps}$. Hence we obtain a functional  $ l^{mod}_{k,\lm,\eps}\in \Hom_G(D_k\otimes D^*_k\otimes V_{-\lm,\eps},\bc)$ on the corresponding quotient space. We denote by $T^{mod}_{k,\lm,\eps}: D_k\otimes D^*_k\to V_{\lm,\eps}$ the associated  map, and  by $H^{mod}_{k,\lm,\eps}(u)=||T^{mod}_{k,\lm,\eps}(u)||_{V_{\lm,\eps}}^2$, $u\in D_k\otimes D^*_k$ the corresponding Hermitian form.

\subsubsection{Model  functionals} We follow the construction from \cite{BR4}. Denote \begin{eqnarray}\label{K-kernel2}
K_{k,\lm}(x,y,z)=|x-y|^{\frac{-1-\lm}{2}}|x-z|^{\frac{-1+\lm}{2}-k+1}|y-z|^{\frac{-1+\lm}{2}+k-1}\ .
\end{eqnarray}
In order to construct $ l^{mod}_{k,\lm,\eps}\in \Hom_G(\CH_{k-2}\otimes\CH_{-k}\otimes V_{-\lm,\eps},\bc)$, we consider the following function  in three variables $x,\ y,\ z\in \br$
\begin{eqnarray}\label{K-circle-discr}
K_{k-2,-k,\lm,\eps}(x,y,z)=(sgn(x,y,z))^{\eps}\cdot K_{k,\lm}(x,y,z)\ ,
\end{eqnarray}  where $sgn(x_1,x_2,z_3)=\prod_{i\not= j}sgn(x_i-x_j)$ (this is an $\SLR$-invariant function on $\br^3$ distinguishing two open orbits). An analogous  expression could be written in the circle  model on the space $C^\8(S^1)$.  Viewed as a kernel, $K_{k-2,-k,\lm,\eps}$ defines an invariant non-zero functional $l^{ind}_{k,\lm,\eps}$ on the (smooth part of) the representation $\CH_{k-2}\otimes\CH_{-k}\otimes V_{-\tau,\eps}\subset C^\8(\br^3)$.
 Such a kernel should be understood in the regularized sense (e.g.,  analytically  continued following  \cite{G1}). We are interested in $\lm\in i\br$, $|\lm|\to\8$, and hence all  exponents in \eqref{K-circle-discr} are non integral. This implies that the regularized kernel does not have a pole at relevant points.

 We denoted by $l^{mod}_{k,\lm,\eps}\in \Hom_G(D_k\otimes D^*_k\otimes V_{-\lm,\eps},\bc)$ the corresponding model functional. The difference with principal series clearly lies in the fact that we only can compute the auxiliary functional $l^{ind}_{k,\lm,\eps}$. However, for $k$ fixed, it turns out that necessary computations are essentially identical to the ones we performed for the principal series in \cite{BR4}.

\subsubsection{Value on $K$-types}  In order to obtain the normalization \eqref{ai-ell-hol}  and to compare our model functional $l^{mod}_{k,\lm,\eps}$ to the automorphic triple product \eqref{ci-ell-hol}, we have to compute, or at least to bound,  the value $l^{mod}_{k,\lm,\eps}(e_k\otimes e_{-k}\otimes e_0)$ where $e_{\pm k}\in D_k$ are  highest$/$lowest $K$-types of  norm one, and $e_0\in V_{\lm,\eps}$ is  a  $K$-fixed vector of norm one. For Maass forms, this is done in the Appendix of \cite{BR4} by explicitly calculating this value in terms of $\G$-functions.  In fact, the relevant calculation is valid for  $K$-fixed vectors for any three induced representations with generic values of parameters (i.e., those for which the final expression is well-defined).  Using the action of the Lie algebra of $G$ (see \cite{Lo} for the corresponding calculation where it is used to prove uniqueness), one obtains recurrence relations between values of  the model functional on various weight vectors.  For a generic value of $\tau$, this allows one to reduce the computation of $l^{mod}_{\tau,\lm,\eps}(e_k\otimes e_{-k}\otimes e_0)$ to the value  of $l^{mod}_{\tau,\lm,\eps}(e_0\otimes e_{0}\otimes e_0)$. By analytic continuation,  this relation holds for our set of parameters corresponding to the discrete series. From this, one deduces  the bound
\begin{eqnarray}\label{K-value}
|l^{mod}_{k,\lm,\eps}(e_k\otimes e_{-k}\otimes e_0)|^2\leq a |\lm_i|^{2k-2}\exp\left(-\frac{\pi}{2}|\lm_i|\right)\ ,
\end{eqnarray} for some explicit constant $a>0$ depending on $k$. In fact, this is the actual order of the magnitude for the above value, and one can obtain the exact value in terms of the Euler gamma function and that way derive the asymptotic for the value $l^{mod}_{k,\lm,\eps}(e_k\otimes e_{-k}\otimes e_0)$ as $|\lm|\to\8$.

\subsubsection*{Remark} There is a natural trilinear functional on Whittaker models of representations of $G$. This is the model which appears in the Rankin-Selberg method as a result of unfolding. The above computation (and the similar one for Maass forms performed in \cite{BR4}) shows that our normalization of the trilinear functional and the one coming from the Whittaker model coincide up to a constant of the absolute value one.

\subsubsection{Test vectors} Our construction is  very close to the construction we made in Section~\ref{consttest} for principal series representations, with appropriate modifications. We construct a test vector  $u_T(x,y)= v_T(x)\otimes v'_T(y)\in D_k\otimes D^*_k\subset \CH_{k-2}\otimes\CH_{-k}$ satisfying $H^{mod}_{k,\lm,\eps}(u_T)\geq \beta >0$ for $ T/2\leq |\lm|\leq T$ and  some constant $\beta>0$ independent of $\lm$. Vectors $v_T\in \CH_{k-2}$ and  $v'_T\in \CH_{-k}$ are first constructed  in the line model of induced representations, and then we relate these to vectors in the discrete series representation $D_k$.

 Choose a smooth non-negative function $\al\in C^\8(\br)$ with the support ${\rm supp}(\al)\subset [-\iota,\iota]$ for $\iota>0$ to be specified later, and normalized by the condition $\int_\br\al(x)dx=1$.  Consider the diagonal element $a_T={\rm diag}(T^{-\haf},T^\haf)\in G$. We define $w_T\in \CH_{k-2}$ by
\begin{eqnarray}\label{1-test-vect}
w_T=T^{\haf}\cdot\pi_{k-2}(a_T)\al\  .
                                        \end{eqnarray}
Recall that the action is given by $\pi_{k-2}({\rm diag}(a\inv,a)) v(x)=|a|^{2-k}v(a^2x)$.
We note that geometrically the vector $w_T$ is a small non-negative bump function around the point $0\in \br$, with the support in the interval $T\inv\cdot[-\iota,\iota]$, and satisfying $\int_{\br}w_T(x)dx=T^{\haf(1-k)}$. We now set $ v_T=q_k(w_T)\in D_k$, where  $q_k: \CH_{k-2}\to D_k$ is the projection. Note that $v_T=T^\haf\cdot \pi_{D_k}(a_T)\tilde v$ for some $\tilde v\in D_k$, i.e., the vector $v_T$ is obtained by the action of $G$ on some {\it fixed} vector in $D_k$. This will be crucial in what follows since we will need to estimate the supremum norm for the automorphic realization of the vector $v_T$.

The construction of the test vector  $v'_T\in D^*_k$ is slightly more complicated since we cannot simply project a vector to $D^*_k\subset \CH_{-k}$ because the value of the functional $l^{ind}_{k,\lm,\eps}$ might change significantly. Let $\al$ be as above. We now view it as a vector in $\CH_{-k}$.  We choose  a smooth real valued function $\al'\in C^\8(\br)$ satisfying the following properties:
\begin{enumerate}
  \item ${\rm supp}(\al')\subset [M-\iota,M+\iota]$, where the parameter $M$ is to be chosen later,
  \item $\int_\br x^m\al'(x)dx=-\int_\br x^m\al(x)dx\ {\rm for}\ 0\leq m\leq k-2\ .$
\end{enumerate} The last condition implies that the vector $w=\al+\al'\in D_k^*$ since $\int_\br x^m w(x)dx=0$ for $0\leq m\leq k-2$.
We define now the second test vector by
\begin{eqnarray}\label{2-test-vect}
  v'_T=T^{\haf}\cdot\pi_{-k}(\left(
                                          \begin{smallmatrix}
1 & 1 \\
                                   & 1 \\
\end{smallmatrix}
                                        \right)a_T) w\ .
                                        \end{eqnarray}
Clearly we have $v'_T\in D^*_k$.

Recall that $\pi_{-k}({\rm diag}(a\inv,a)) v(x)=|a|^{k}v(a^2x)$, and $\pi_{-k}(\left(
                                          \begin{smallmatrix}
1 & 1 \\
                                   & 1 \\
\end{smallmatrix}
                                        \right)) v(x)=v(x-1)$.
Hence geometrically the vector $v'_T=\al_T+\al_T'$ is the sum of two bump functions $\al_T$ and $\al_T'$ with their supports satisfying
${\rm supp}(\al_T)\subset 1+T\inv[-\iota, \iota]$ and ${\rm supp}(\al_T')\subset 1+T\inv[M-\iota, M+\iota]$, both near the point $1\in \br$. We also have $\al_T\geq 0$ and $\int_{\br}\al_T(x)dx=T^{\haf(k-1)}$.

We now set our test vector to $u_T=v_T\otimes v_T'\in D_k\otimes D^*_k$.  We want to show that for $ T/2\leq|\lm|\leq T$, \begin{eqnarray}\label{mod-lower-bnd-hol} |l^{mod}_{k,\lm,\eps}(u_T\otimes u)|\geq c'>0\end{eqnarray}  for some vector $u\in V_{\lm,\eps}$ with $||u||=1$, and with a constant $c'>0$ independent of $\lm$.

As we explained before, $l^{mod}_{k,\lm,\eps}(q_k(v)\otimes w\otimes u)= l^{ind}_{k,\lm,\eps}(v\otimes w\otimes u)$ for any triple $v\otimes w\otimes u\in \CH_{k-2}\otimes\CH_{-k}\otimes V_{\lm,\eps}$. Hence we work with $l^{ind}_{k,\lm,\eps}(w_T\otimes v'_T\otimes u)$ instead of $l^{mod}_{k,\lm,\eps}$ because  $l^{ind}_{k,\lm,\eps}$ is given by an explicit integral. Let $K_{k,\lm}(x,y,z)$ be as in \eqref{K-kernel2}.
We have then
\begin{eqnarray}\label{triple-value-hol-test}
l^{ind}_{k,\lm,\eps}(w_T\otimes v'_T\otimes u)=\int K_{k,\lm}(x,y,z)(sgn(x,y,z))^{\eps}\
 w_T(x)v'_T(y)u(z)\ dxdydz\ .
\end{eqnarray} Hence it is enough to show that the absolute value of the integral
\begin{eqnarray}\label{triple-value-hol-z}
I_\lm(z)=\int
 K_{k,\lm}(x,y,z) w_T(x)v'_T(y)\ dxdy=\langle K_{k,\lm}(x,y,z) , w_T(x)v'_T(y)\rangle \
\end{eqnarray} is not small for $z$ in some fixed interval of $\br$. 

We have
$I_\lm(z)=K(z)+K'(z)$,  where $K(z)=\langle K_{k,\lm}(x,y,z) , w_T(x)\al_T(y)\rangle$ and
$K'(z)=\langle K_{k,\lm}(x,y,z) , w_T(x)\al'_T(y)\rangle$. We will show now that for a certain range of $z$,  integrands in these integrals are not small, do not oscillate, and that integrals do  not cancel each other. This is implies the existence of a vector $u$ above (e.g., $L^2$-normalized  smooth characteristic function supported in the same range of $z$). 

The function $w_T(x)\al_T(y)$ is a non-negative function with the support in a small box of  size $\iota T\inv$ around the point $(0,1)\in\br^2$, and  the gradient of the function $K_{k,\lm}(x,y,z)$ is bounded by $|\lm|\leq T$ in this box. This implies that for a small enough (but fixed) $\iota>0$, the {\it argument} of $K_{k,\lm}(x,y,z)$ belongs to a small interval of $S^1$ for $x,y,z$ satisfying the above restrictions. It is also easy to see that there exists a  constant $c''>0$ such that $|K(z)|\geq c''>0$  for $z\in [10,20]$ and  $|\lm|\leq T$.    
Hence there are no significant cancellations in the integral   \eqref{triple-value-hol-z} for $z$ in the above range. We normalized our vectors so that $\int_{\br^2}v_T(x)\al_T(y)dxdy=1$, and hence the integral $K(z)$ is not small for $z$ not near singularities of the kernel $K_{k,\lm}(x,y,z)$ which is the case because of the restriction on $x$ and $y$.   This part is identical to our argument in Section 5.3.4 of \cite{BR4}.

We are left with the second term $K'(z)$. We want to show that there are no cancellations between two terms $K(z)$ and $K'(z)$ for  $z\in [10,20]$.  The reason for this is that while these terms are of about the same size, their arguments are different, and  not opposite if $\lm$ is not too small (e.g.,  $T/2\leq |\lm|\leq T$). Namely,  the argument of the kernel function $K_{k,\lm}$ in \eqref{triple-value-hol-z} on the support of $u_T$  is given by
\begin{eqnarray}\label{argument-K-lm}
\lm/2T\cdot \left[t_1(1-z\inv)-t_2(1+(z-1)\inv)\right]+\lm/2\left[\ln(z)+\ln(z-1)\right] +O(T\inv)\ ,
\end{eqnarray} where $t_1\in [-\iota,\iota]$, $t_2\in [1-\iota, 1+\iota]$ for $\al_T$, and $t_2\in [M-\iota, M+\iota]$ for $\al_T'$.  By choosing appropriate value of $M$, we can see that the {\it difference} of these arguments  is not close to $0$ and $\pi$ for any {\it fixed}  $z\in [10,20]$ and $T/2\leq|\lm|\leq T$. Hence integrals $K(z)$ and $K'(z)$ do not cancel each other since $u_T$ is real valued.

We have shown that $H_\lm(u_T)\geq c'>0$ for $T/2\leq |\lm|\leq T$, and some explicit $c'>0$ which is independent of $\lm$.

\subsubsection{Raising the level} We now discuss what happens when we change the level. Since our test vectors $v_T$ and $v_T'$ are {\it not} $K$-finite, we have to pass to automorphic functions on the space $X_\ell$. We use the standard notation  $j(g,z)=\det(g)^{-\haf} (cz+d)$ for
$g= \left(
                                          \begin{smallmatrix}
a & b \\
                               c    & d \\
\end{smallmatrix}
                                        \right)\in G^+$ and $z\in\uH$.
Let $\phi^k$ be a primitive holomorphic form of weight $k$ on $\uH$ for the subgroup $\G_0(N_0)$. We normalize $\phi^k$ by its norm on $Y_{N_0}$.  According to the well-known dictionary, we associate to $\phi^k$   the function $\phi_{e_k}\in C^\8(X_0)$ given by
\begin{eqnarray} \phi_{e_k}(g)=\phi^k(g(i))\cdot j(g,z)^{-k} \ , \end{eqnarray} where $z=g(i)$. In the opposite direction, we have
$\phi^k(g(i))=\phi_{e_k}(g)\cdot j(g,z)^{k}$.  We have the associated isometry $\nu_k=\nu_{\phi^k}:D_k\to C^\8(X_{N_0})$ which gives $\nu_k(e_k)=\phi_{e_k}$.

Let $\ell>1$ be an integer. We denote by $a_\ell={\rm diag}(
\ell^\haf ,\ell^{-\haf} )$.
For a given $\nu_k=\nu_{\phi^k}:D_k\to C^\8(X_{N_0})$, we construct the corresponding isometry ${\nu_k^\ell}:D_k\to C^\8(X_{\ell})$ as follows. For a vector $v\in D_k$, we consider the corresponding automorphic function
\begin{eqnarray}\label{nu-ell-nu-hol}
{^\ell\phi}_v(x)=\nu_k^\ell(v)(x)=\nu_k(v)\left(a_\ell x\right)=\phi_v\left(a_\ell x\right)\  .
\end{eqnarray} Obviously, we have $\sup_{X_\ell}|{^\ell\phi}_v|=\sup_{X_0}|{\phi}_v|$ for the same vector $v\in D_k$. We want to compare this to the classical normalization of old forms.
For the  lowest weight vector $e_k\in D_k$, we have with $z=g(i)$
 \begin{eqnarray}\label{nu-ell-nu-hol2}
{^\ell\phi}_{e_k}(g)=\phi_{e_k}
\left(a_\ell g\right)=\phi^k(\ell z)\cdot j(a_\ell g,i)^{-k}=\ell^{\frac{k}{2}} \phi^k(\ell z)\cdot j(g,i)^{-k}\ .
                                        \end{eqnarray}
On the other hand, classically, old forms are given by $ {^\ell\phi^k}( z)=\phi^k(\ell z)$. Hence we acquire the extra factor $ \ell^{\frac{k}{2}}$.

Since, as we noted, test  vectors $v_T\in D_k$ and $v'_T\in D^*_k$ are obtained by the (scaled) group action applied to  {\it fixed} vectors, and since the operation of raising the  level by $\ell$ does not change the supremum norm, we arrive at the following bound
\begin{eqnarray*}
\sup_{X_\ell}|{{^\ell\phi_{v_T}}(x)}|=\sup_{X_0}|{\phi_{v_T}(x)}|=
T^{\haf}\sup_{X_0}|{\phi_{\pi(a_T)w}(x)}|=
T^{\haf}\sup_{X_0}|{\phi_{w}(x)}|= \beta' T^\haf\ ,
\end{eqnarray*} for some constant $\beta'$. The same holds for the automorphic function ${^\ell\phi_{v'_T}}$. This implies that $||{^\ell\phi_{v_T}}{^\ell\phi_{v'_T}}||^2_{X_\ell}\leq \beta T^2$ for some $\beta>0$ independent of $\ell$ and $T$.

To summarize, we have proved the bound
\begin{eqnarray}\label{mean-value-hol}
\sum_{ \haf T\leq |\lm_i|\leq T}|\langle {^{\ell}}\phi^k\ \overline{^\ell\phi'^k},\phi_i\rangle_{Y_{\ell N_0}}|^2\cdot\ell^{2k}|\lm_i|^{2-2k}e^{\haf\pi|\lm_i|}\leq A\cdot T^2\ ,
\end{eqnarray} where the summation is over an orthonormal basis of Maass forms for the subgroup $\G_0(\ell N_0)$. The constant $A$ in \eqref{mean-value-hol} depends on $N_0$ and on the weight $k$ of forms $\phi$ and $\phi'$. 

The above argument also proves the case of forms with different level, i.e., the bound \eqref{mean-value-hol-dif-level}.  Under  the normalization of measures on $X_\ell$, we see that for a vector $v\in V$, $L^2$-norms $||{^\ell\phi_v}||_{X_\ell}=||{^\ell\phi_v}||_{X_{\ell'}}$ are equal  if $\ell|\ell'$ (here we view the function ${^\ell\phi_v}$ as both $\G_0(\ell N_0)$-invariant function and as $\G_0(\ell' N_0)$-invariant function). Obviously, the supremum norms of $^\ell\phi_v$ on $X_\ell$ and on $X_{\ell'}$ are also coincide. Hence for integers  $\ell_1$ and $\ell_2$, and  $\ell'={\rm lcm}(\ell_1,\ell_2)$, we have $$||{^{\ell_1}\phi_{v_T}}{^{\ell_2}\phi_{v'_T}}||^2_{X_{\ell'}}\leq  \haf||{^{\ell_1}\phi_{v_T}}^2||^2_{X_{\ell'}}+
\haf||{^{\ell_2}\phi_{v'_T}}||^2_{X_{\ell'}}\leq  \beta T^2\ .$$
This implies \eqref{mean-value-hol-dif-level}.
\qed


{\bf Acknowledgments.}  It is a pleasure to thank Joseph Bernstein for endless discussions concerning automorphic functions. In particular, results presented here are part of a joint project.  I would like to thank Jeff Hoffstein for asking the question during the  Oberwolfach workshop ``The Analytic Theory of Automorphic Forms" in August 2012 which led me to complete the long overdue results from \cite{BR4}. It is also a pleasure to thank  organizers of that workshop for their kind invitation.

\bibliography{Mergedbibfile}	
\bibliographystyle{amsplain}	
\end{document}